   \renewcommand{\a}{\mathfrak{a}}
\renewcommand{\b}{\mathfrak{b}}
\let\turc\c
\renewcommand{\c}{\mathfrak{c}}
\newcommand{\bpm}{\begin{pmatrix}}
\newcommand{\epm}{\end{pmatrix}}
\newcommand{\Z}{\ensuremath{\mathbb{Z}}}
\newcommand{\inv}{^{-1}}
\DeclareMathOperator{\SL}{SL}
\newcommand{\mz}{\ensuremath{\mathbb Z}}
\newcommand{\mh}{\ensuremath{\mathbb H}}
\newcommand{\mc}{\ensuremath{\mathbb C}}
\newcommand{\mymod}{\ensuremath{\negthickspace \negmedspace \pmod}}
\newcommand{\shortmod}{\ensuremath{\negthickspace \negthickspace \negthickspace \pmod}}
\newcommand{\intR}{\int_{-\infty}^{\infty}}
\newcommand{\sumstar}{\sideset{}{^*}\sum}
\newcommand{\sumprime}{\sideset{}{'}\sum}
\DeclareMathOperator{\sgn}{sgn}
\theoremstyle{plain}		
	\newtheorem{mytheo}{Theorem} [section]
	\newtheorem{myprop}[mytheo]{Proposition}
	\newtheorem{mycoro}[mytheo]{Corollary}
     \newtheorem{mylemma}[mytheo]{Lemma}
\theoremstyle{remark}
\numberwithin{equation}{section}
\numberwithin{figure}{section}
\begin{document}
\title{Explicit calculations with Eisenstein series}
\author{%Eren Mehmet K{\i}ral and 
Matthew P. Young}

\address{Department of Mathematics \\
	  Texas A\&M University \\
	  College Station \\
	  TX 77843-3368 \\
		U.S.A.}
%\email{ekiral@tamu.edu}		
\email{myoung@math.tamu.edu}

\thanks{This material is based upon work %of M.Y. 
supported by the National Science Foundation under agreement No. DMS-1401008.  Any opinions, findings and conclusions or recommendations expressed in this material are those of the authors and do not necessarily reflect the views of the National Science Foundation. %\\
 %The first author was partially supported by an AMS-Simons travel grant.
 }
%\author{Ian Petrow} 
%\address{fill this in}
%\email{ian}
% \author{Matthew P. Young} 
% \address{Department of Mathematics \\
% 	  Texas A\&M University \\
% 	  College Station \\
% 	  TX 77843-3368 \\
% 		U.S.A.}
% \email{myoung@math.tamu.edu}

 %\thanks{This material is based upon work supported by the National Science Foundation under agreement No. DMS-updatethis.  Any opinions, findings and conclusions or recommendations expressed in this material are those of the authors and do not necessarily reflect the views of the National Science Foundation.}

  \begin{abstract}
   We find explicit change-of-basis formulas between Eisenstein series attached to cusps, and newform Eisenstein series attached to pairs of primitive Dirichlet characters.  
As a consequence, we prove a Bruggeman-Kuznetsov formula for newforms of square-free level and trivial nebentypus.  

We also derive, in explicit form, the Fourier expansion, Hecke eigenvalues, Atkin-Lehner pseudo-eigenvalues, and other data associated to these Eisenstein series, with arbitrary integer weight, level, and nebentypus.   
   
  \end{abstract}

 \maketitle
\section{Introduction} 
 Let $N$ be a positive integer, and consider the space of automorphic forms of level $N$, weight $k \in \mathbb{Z}$, and nebentypus $\psi$ modulo $N$.  There are at least two natural choices of how to decompose the space spanned by the Eisenstein series.  One is to use Eisenstein series $E_{\a}(z,s, \psi)$ attached to cusps %, as in \cite{IwaniecClassical} \cite{IwaniecSpectral} \cite{DFI}.  The 
 and the
 other is to use Eisenstein series $E_{\chi_1, \chi_2}(z,s)$ attached to pairs of Dirichlet characters, %as in \cite{KnightlyLi}, 
 which has a natural interpretation from representation theory.  The decomposition along cusps is quite convenient from the point of view of the spectral decomposition and the Parseval formula since in essence Eisenstein series attached to inequivalent cusps are orthogonal.  The decomposition with Dirichlet characters is friendly when studying the $L$-functions associated to automorphic forms.  Clearly, these two features are in conflict with each other.
 
 One of the main goals of this paper is to explicitly work out the translations between these different bases, and to use this information to derive some other useful formulas.  These change of basis formulas appear as Theorems \ref{thm:EcuspInTermsofEchichi} and \ref{thm:EchichiInTermsofEa}.  
 
 Along the way, we derive the Fourier expansion (see Proposition \ref{prop:FourierExpansion}), allowing us to derive the functional equation of $E_{\chi_1, \chi_2}(z,s)$ under $s \rightarrow 1-s$.  The Fourier expansion also shows that $E_{\chi_1,\chi_2}$ is an eigenfunction of all the Hecke operators, and gives explicit formulas for the Hecke eigenvalues.
  We additionally examine the
Mellin transform of the Eisenstein series $E_{\chi_1, \chi_2}(iy,s)$, leading to a complementary functional equation related to the functional equation of the Dirichlet $L$-functions.

After developing the change of basis formulas, in Section \ref{section:orthogonality} we examine the orthogonality properties of the various types of Eisenstein series.  

In Section \ref{section:AtkinLehnerEigenvalues}, we explicitly derive the action of the Atkin-Lehner operators.  

As a culmination of all this work, in Section \ref{section:Kuznetsov} we derive a Bruggeman-Kuznetsov formula restricted to newforms, in the special case where the level is squarefree, the nebentypus is principal, and the weight is $0$.  This is an extension of the derivation of the Petersson formula for newforms of \cite{PetrowYoung}, which proceeds by a sieving argument.  The method of \cite{PetrowYoung} may be easily modified to sieve for cuspidal Maass newforms, but unfortunately the same approach does not immediately carry over to sieve the continuous spectrum.  This sieving argument is a requirement to prove the Bruggeman-Kuznetsov formula for newforms.  The material developed in this paper may be used to sieve the Eisenstein series in a close analogy to the cusp forms.  

In \cite{PetrowYoung}, the newform Petersson formula is the crucial intitial step to set up the cubic moment which is in turn used to prove a strong subconvexity bound for twisted $L$-functions $L(f \otimes \chi_q, 1/2)$ where $f$ is a holomorphic newform of squarefree level $N$, and $\chi_q$ is a quadratic character of conductor $q$.  This paper generalized the groundbreaking work of Conrey and Iwaniec \cite{ConreyIwaniec}, which required $N|q$.  With the aid of the newform Bruggeman-Kuznetsov formula, the tools are now in place to allow for $f$ to be a Maass newform.  The arithmetical aspects of \cite{PetrowYoung} will be the same, but some of the analytical aspects (e.g., integral transforms with Bessel functions) will look somewhat different.  Conrey and Iwaniec \cite{ConreyIwaniec} successfully treated both the holomorphic forms and Maass forms in tandem, so 
 it is reasonable to expect that the subconvexity bound of \cite{PetrowYoung} may be extended to Maass forms.  A motivation for this extension is the hybrid equidistribution of Heegner points as both the level and discriminant vary (as in \cite{LMY}); such a result would immediately improve many of the exponents in \cite{LMY}.
 
%  An application of the change of basis formulas, Theorems \ref{thm:EcuspInTermsofEchichi} and \ref{thm:EchichiInTermsofEa}, is to calculate the scattering matrix for $\Gamma_0(N)$ with nebentypus $\psi$.  The case of trivial nebentypus was done by Huxley \cite{Huxley}, using some implicit version of these formulas.
 
%\red{ In forthcoming work with P. Humphries, we plan to use the change of basis formulas to derive a formula for the scattering matrix, with the application of proving a uniform Weyl law. }

 An overarching goal of this paper is to provide, in one place, and in explicit form, many of the basic properties of Eisenstein series, using the classical language.  Many special cases (e.g., with principal nebentypus, or primitive nebentypus, or weight $k=0$, or with holomorphic forms, \dots) are scattered throughout the literature, and the author found \cite{Huxley} \cite{IwaniecClassical} \cite{IwaniecSpectral} \cite{DFI} \cite{DiamondShurman} \cite{KnightlyLi} particularly useful references.
 
 %\emph{(To do:)} We also derive versions of the spectral decomposition and the Bruggeman-Kuznetsov formula with these bases. \emph{(To do:?)} When the level is square-free, we prove a version of the Bruggeman-Kuznetsov formula for newforms. \emph{(To do:?)} We prove a version of the Petersson/Bruggeman-Kuznetsov formula for newforms of arbitrary level.

\section{Acknowledgments}
I thank Peter Humphries and Jiakun Pan for useful feedback and corrections on earlier drafts of this paper, and especially E. Mehmet Kiral for many conversations on this work.  I also thank Andrew Booker and Min Lee for informing me of their independent work with Andreas St\"ombergsson on the change-of-basis formulas, and Gergely Harcos, Abhishek Saha, and Rainer Schulze-Pillot for conversations on orthogonalization.

 \section{Definitions}
 \subsection{Eisenstein series attached to cusps}
 \label{section:EisensteinCusps}
 Let $\Gamma = \Gamma_0(N)$, suppose $\psi$ is a Dirichlet character modulo $N$, and let $k \in \mathbb{Z}$.  We study automorphic functions $f$ on $\Gamma$ of weight $k$ and nebentypus $\psi$, which satisfy the transformation formula
 \begin{equation*}
  f(\gamma z) = j(\gamma, z)^k \psi(\gamma) f(z),
 \end{equation*}
where $\psi(\gamma) = \psi(d)$, with $d$ denoting the lower-right entry of $\gamma$, and where
\begin{equation*}
 j(\gamma, z) = \frac{cz+d}{|cz+d|}.
\end{equation*}
Since $-I \in \Gamma_0(N)$, a necessary condition for a nonzero automorphic function to exist is that
\begin{equation*}
 \psi(-1) = (-1)^k.
\end{equation*}

  Let $\a$ be a cusp for $\Gamma$, and let $\sigma_{\a}$ be a scaling matrix for $\a$, which means $\sigma_{\a} \infty = \a$, and $\sigma_{\a}^{-1} \Gamma_{\a} \sigma_{\a} = \Gamma_{\infty}= \{\pm (\begin{smallmatrix} 1 & b \\ & 1 \end{smallmatrix} ): b \in \mz \}$.  Let $\tau_{\a} = \sigma_{\a} (\begin{smallmatrix} 1 & 1 \\ & 1 \end{smallmatrix})  \sigma_{\a}^{-1}$, so that $\pm \tau_{\a}$ generate $\Gamma_{\a}$, 
 the stablizer of $\a$ in $\Gamma$.  We say that
$\a$ is \emph{singular} for $\psi$  if $\psi(\tau_{\a}) = 1$.
 The Eisenstein series of nebentypus $\psi$ and weight $k$ attached to the cusp $\a$ is defined by
 \begin{equation}
 \label{eq:Eadef}
  E_{\a}(z,s, \psi) = \sum_{\gamma \in \Gamma_{\a} \backslash \Gamma} \overline{\psi}(\gamma) j(\sigma_{\a}^{-1} \gamma, z)^{-k} (\text{Im } \sigma_{\a}^{-1} \gamma z)^s,
 \end{equation}
 initially for $\text{Re}(s) > 1$.  Since we generally have no need to combine Eisenstein series with different weights, we suppress the weight in the notation.  
 
 The definition of singular given above disagrees with 
 some definitions in the literature,
 %\cite[(4.43)]{DFI} and \cite[(3.8.1)]{GoldfeldHundley}), 
 which seemingly allowed $\psi(\tau_{\a})$ to be $-1$ for $k$ odd.  Under this assumption, $E_{\a}(z,s,\psi)$ would be ill-defined, because the summand would change sign under $\gamma \rightarrow \tau_{\a} \gamma$. 
 
 One may check that $E_{\a}$ is independent of the choice of scaling matrix.  Moreover, if $\a$ and $\b = \gamma \a$ are $\Gamma$-equivalent cusps, then
 \begin{equation}
 \label{eq:EaEquivalentCuspsRelation}
  E_{\gamma \a}(z,s,\psi) = \overline{\psi}(\gamma) E_{\a}(z,s,\psi),
 \end{equation}
 correcting a remark of \cite[p.505]{DFI}.
%In \cite{KY}, explicit formulas for the Fourier expansion of $E_{\a}(z,s)$ (in case $\psi$ is trivial, we omit it from the notation) are given, for an arbitrary cusp $\a$.  We will collect what is required for our purposes later in Section \ref{section:KY} below.

\subsection{Eisenstein series attached to characters}
We draw inspiration from Huxley's paper \cite{Huxley}, but refer the reader to \cite{KnightlyLi} for a more motivated definition.
Let $\chi_1, \chi_2$ be Dirichlet characters modulo $q_1, q_2$, respectively, with $\chi_1(-1)  \chi_2(-1) = (-1)^k$.  Define
\begin{equation}
\label{eq:Echi1chi2def}
 E_{\chi_1, \chi_2}(z,s) = \frac12 \sum_{(c,d) = 1} \frac{(q_2 y)^s \chi_1(c) \chi_2(d)}{|cq_2z + d|^{2s}} \Big(\frac{|c q_2 z + d|}{cq_2 z + d}\Big)^k.
\end{equation}
One can check directly that $E_{\chi_1, \chi_2}$ is automorphic on $\Gamma_0(q_1 q_2)$ of weight $k$ and nebentypus $\chi_1 \overline{\chi_2}$.  However, we can give a more structural view as follows.\footnote{The author thanks E. Mehmet Kiral for pointing out this setup.}  With $\chi_1, \chi_2$ as above, define 
\begin{equation}
\theta \begin{pmatrix} a & b \\ c & d \end{pmatrix} = \chi_1(c) \chi_2(d),
\end{equation}
for $a,b,c,d \in \mz$ and $ad-bc=1$.  Also define $\sigma = (\begin{smallmatrix} \sqrt{q_2} & \\ & 1/\sqrt{q_2} \end{smallmatrix})$, which satisfies $\sigma z = q_2 z$.  Then $E_{\chi_1, \chi_2}(z,s) = E_{\theta}(z,s)$, where $E_{\theta}$ is defined by
\begin{equation}
\label{eq:EthetaDef}
E_{\theta}(z,s) = \sum_{\gamma \in \Gamma_{\infty} \backslash \Gamma_0(1)} \theta(\gamma)
j(\gamma, \sigma z)^{-k}
 \text{Im}(\gamma \sigma z)^s .
\end{equation}
It is easy to check that $\sigma \Gamma_0(q_1 q_2) \sigma^{-1} = \Gamma_0(q_1, q_2)$, where this latter group denotes the subgroup of $SL_2(\mathbb{Z})$ with lower-left entry divisible by $q_1$, and upper-right entry divisible by $q_2$.  Moreover, if $\tau \in \Gamma_0(q_1 q_2)$, and $\tau' \in \Gamma_0(q_1, q_2)$ is defined by $\sigma \tau = \tau' \sigma$, then the lower-right entry of $\tau'$ is the same as the lower-right entry of $\tau$.  Finally, one directly checks that if $\gamma \in SL_2(\mz)$  and $\tau' \in \Gamma_0(q_1,q_2)$, then
\begin{equation}
\theta(\gamma \tau') = \theta(\gamma) (\overline{\chi_1} \chi_2)(d'),
\end{equation}
where  $d'$ is the lower-right entry of $\tau'$.  

With these properties, we may now check the automorphy of $E_{\theta}$.  For $\tau \in \Gamma_0(q_1 q_2)$, and $\tau'$ defined by $\sigma \tau = \tau' \sigma$, we have
\begin{equation}
E_{\theta}(\tau z, s) = 
\sum_{\gamma \in \Gamma_{\infty} \backslash \Gamma_0(1)} \theta(\gamma)
j(\gamma, \tau' \sigma z )^{-k}
 \text{Im}(\gamma \tau' \sigma z)^s.
\end{equation}
Changing variables by $\gamma \rightarrow \gamma \tau'^{-1}$, using
$j(\gamma \tau'^{-1}, \tau' \sigma z)^{-k} = j(\gamma, \sigma z)^{-k} j(\tau', \sigma z)^k$,
and finally $j(\tau', \sigma z) = j(\tau, z)$ (check this one directly from the definitions), we complete the proof that $E_{\theta}$ has weight $k$, nebentypus $\chi_1 \overline{\chi_2}$, and level $q_1 q_2$.

%\emph{Mehmet seems to be coming up with a more motivated definition for these.} 
By M\"obius inversion, we have
\begin{equation}
\label{eq:Gchi1chi2Def}
 L(2s, \chi_1 \chi_2) E_{\chi_1, \chi_2}(z,s) = 
 \frac12 \sumprime_{c,d \in \mathbb{Z}} \frac{(q_2 y)^s \chi_1(c) \chi_2(d)}{|cq_2z + d|^{2s}} \Big(\frac{|c q_2 z + d|}{cq_2 z + d}\Big)^k  =: G_{\chi_1, \chi_2}(z,s),
\end{equation}
with the prime denoting that the term $c=d=0$ is omitted.
%When $\chi_1 = \chi_2 = \chi$, then $E_{\chi,\chi}$ has trivial nebentypus.
With $k=0$, Huxley's notation for our $E_{\chi_1, \chi_2}(z,s)$ is $E_{\chi_1}^{\chi_2}(q_2 z, s)$, and our $G_{\chi_1, \chi_2}(z,s)$ is Huxley's $B_{\chi_1}^{\chi_2}(q_2 z, s)$.

The reader interested in holomorphic Eisenstein series of weight $k \geq 3$ should consider 
\begin{equation}
\label{eq:EholomorphicDef}
E_{\chi_1, \chi_2, k}(z) := 
(q_2 y)^{-k/2} E_{\chi_1, \chi_2}(z, \tfrac{k}{2})
=
\frac12 \sum_{(c,d) = 1} \frac{ \chi_1(c) \chi_2(d)}{(cq_2z + d)^{k}}
,
\end{equation}
which satisfies $E_{\chi_1, \chi_2, k}(\tfrac{az+b}{cz+d}) = (cz + d)^k (\chi_1 \overline{\chi_2})(d) E_{\chi_1, \chi_2,k}(z)$, for $(\begin{smallmatrix} a& b \\ c & d \end{smallmatrix}) \in \Gamma_0(q_1 q_2)$.

\subsection{Remarks}
\label{section:newformdiscussion}
We refer the reader to \cite[Section 4]{DFI} for a more comprehensive overview of the space $L^2(\Gamma_0(N) \backslash \mh, \psi)$, including a discussion on the relevant differential operators.

Weisinger has developed a newform theory for Eisenstein series of weight $k$ and nebentypus $\psi$ in the holomorphic setting.  
The newforms of level $q_1 q_2$ are the functions 
$E_{\chi_1,\chi_2,k}(z)$  where $\chi_i$ is primitive modulo $q_i$, $i=1,2$, and $\chi_1 \overline{\chi_2} = \psi$.  The space of Eisenstein series of level $N$ is spanned by $E_{\chi_1,\chi_2,k}(Bz)$ where $B q_1 q_2  | N$.
% The number of such Eisenstein series is thus $\sum_{\ell^2 B | N} \varphi^*(\ell) \tau(B)$, where $\varphi^*(\ell)$ is the number of primitive characters modulo $\ell$ (so that $1*\varphi^* = \varphi$, where $*$ is Dirichlet convolution, so $\varphi^* = \mu * \varphi$), and $\tau(B)$ is the divisor function.
% Meanwhile, the number of cusps on $\Gamma_0(N)$ is $\sum_{f | N} \varphi((f,N/f))$.  One may check directly that these numbers agree.

In \cite{Huxley}, Huxley calculated the scattering matrix for the congruence subgroups $\Gamma_0(N)$, $\Gamma_1(N)$, and $\Gamma(N)$, with trivial nebentypus and weight $0$.
For each of these groups, he related Eisenstein series attached to characters to Eisenstein series attached to cusps on $\Gamma(N)$.

\section{Fourier expansion and consequences}
\subsection{The Fourier expansion}
For $k=0$, Huxley \cite{Huxley} stated (without proof) the Fourier expansion of $E_{\chi_1, \chi_2}(z,s)$; a proof may be found in \cite[Section 5.6]{KnightlyLi}.  For general $k$, and $\chi_1 \overline{\chi_2}$ primitive, \cite{DFI} have developed the Fourier expansion.  One may also find a Fourier expansion for $E_{\chi_1, \chi_2,k}$ in \cite{DiamondShurman}.
The author was not able to locate a formula for general $k,\chi_1, \chi_2$.

It is convenient to consider the ``completed'' Eisenstein series defined by
\begin{equation*}
 E_{\chi_1, \chi_2}^*(z,s):= \frac{(q_2/\pi)^s}{i^{-k} \tau(\chi_2)} \Gamma(s+\tfrac{k}{2})  L(2s, \chi_1 \chi_2) E_{\chi_1, \chi_2}(z,s),
\end{equation*}
where $\tau(\chi_2)$ denotes the Gauss sum.

\begin{myprop}
\label{prop:FourierExpansion}
Suppose $\chi_i$ is primitive modulo $q_i$, $i=1,2$, and $(\chi_1 \chi_2)(-1) = (-1)^k$.  Then
\begin{equation*}
E_{\chi_1, \chi_2}^*(z,s)
 = 
 e_{\chi_1, \chi_2}^*(y,s)
 +   \sum_{n \neq 0} \frac{\lambda_{\chi_1,\chi_2}(n,s)}{\sqrt{|n|}} e(nx) \frac{\Gamma(s+\frac{k}{2}) }{\Gamma(s+\frac{k}{2} \sgn(n))}  W_{\frac{k}{2} \sgn(n), s-\frac12}(4 \pi |n| y),
\end{equation*}
where
\begin{equation}
\label{eq:EisensteinChiChiConstantTerm}
\begin{split}
 e_{\chi_1, \chi_2}^*(y,s) = & \delta_{q_1=1} q_2^{2s} \frac{\pi^{-s}}{i^{-k} \tau(\chi_2)} \Gamma(s+\tfrac{k}{2})  L(2s, \chi_2) y^s
 \\
& +  \delta_{q_2=1}  q_1^{2-2s} \frac{ \pi^{-(1-s)} }{ i^{-k} \tau(\overline{\chi_1}) } 
 \Gamma(1-s+\tfrac{k}{2})
 L(2-2s, \overline{\chi_1}) y^{1-s},
 \end{split}
\end{equation}
% \begin{equation}
% \Big(\frac{q_2}{\pi}\Big)^s \Gamma(s) L(2s, \chi_1 \chi_2) E_{\chi_1, \chi_2}(z,s) = 2\tau(\chi_2)  \sqrt{y}  \sum_{n \neq 0} \tau_{\chi_1,\chi_2,s}(n) e(nx) K_{s-\frac12}(2 \pi |n| y) ,
% \end{equation}
\begin{equation}
\label{eq:lambdachi1chi2Def}
 \lambda_{\chi_1,\chi_2}(n,s) = \chi_2(\sgn(n)) \sum_{ab=|n|} \chi_1(a) \overline{\chi_2}(b) \Big(\frac{b}{a}\Big)^{s-\frac12},
\end{equation}
 and $W_{\alpha, \beta}$ is the Whittaker function. 
 %normalized by
% \begin{equation}
% W^*_{\frac{k}{2} \sgn(n), s-\frac12}(4 \pi |n| y) = \frac{\Gamma(s)}{\Gamma(s+\frac{k}{2} \sgn(n))}
% W_{\frac{k}{2} \sgn(n), s-\frac12}(4 \pi |n| y).
% \end{equation}
In particular, if $k=0$, then
\begin{equation*}
%\Big(\frac{q_2}{\pi}\Big)^s \Gamma(s) L(2s, \chi_1 \chi_2) 
E_{\chi_1, \chi_2}^*(z,s) = 
e_{\chi_1, \chi_2}^*(y,s)+
2  \sqrt{y}  \sum_{n \neq 0} \lambda_{\chi_1,\chi_2}(n,s) e(nx) K_{s-\frac12}(2 \pi |n| y).
\end{equation*}
\end{myprop}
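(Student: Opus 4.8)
The plan is to unfold the series $G_{\chi_1,\chi_2}(z,s) = L(2s,\chi_1\chi_2) E_{\chi_1, \chi_2}(z,s)$ from \eqref{eq:Gchi1chi2Def}, whose double sum over $(c,d) \in \mz^2 \setminus \{0\}$ carries no coprimality constraint and is therefore amenable to Poisson summation in $d$. Throughout, write $z = x+iy$, $cq_2 z + d = (cq_2 x + d) + i c q_2 y$, and set $f_v(u) = (u^2+v^2)^{-s}\big((u^2+v^2)^{1/2}/(u+iv)\big)^k = (u+iv)^{-s-k/2}(u-iv)^{-s+k/2}$. Split the $c$-sum into $c=0$ and $c \neq 0$. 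The $c=0$ part equals $\tfrac12 (q_2 y)^s \chi_1(0) \sum_{d \neq 0}\chi_2(d)|d|^{-2s}(|d|/d)^k$, which vanishes unless $q_1 = 1$; when $q_1=1$ the parity hypothesis forces $\chi_2(-1) = (-1)^k$, so the $\pm d$ terms combine to give $(q_2 y)^s L(2s,\chi_2)$, and multiplying by the completion factor $(q_2/\pi)^s \Gamma(s+\tfrac k2)/(i^{-k}\tau(\chi_2))$ recovers the first line of \eqref{eq:EisensteinChiChiConstantTerm}.

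For $c \neq 0$, treat $c>0$ and $c<0$ in parallel (their recombination, via $\chi_1(-1)\chi_2(-1) = (-1)^k$, is responsible for several of the signs below), and detach the $q_2$-periodicity of $d \mapsto \chi_2(d)$ using the primitivity of $\chi_2$ via $\chi_2(d) = \tau(\overline{\chi_2})^{-1}\sum_{a \bmod q_2}\overline{\chi_2}(a) e(ad/q_2)$. Poisson summation then gives, with $v = cq_2 y$,
\[
\sum_{d \in \mz} e(ad/q_2) f_v(cq_2 x + d) = \sum_{m \in \mz} e\big((mq_2 - a)cx\big)\,\widehat{f_v}(m - a/q_2), \qquad \widehat{f_v}(\xi) = \intR f_v(u) e(-\xi u)\,du.
\]
Writing the $x$-frequency as $n = (mq_2 - a)c$, the constraints $c \mid n$ and $a \equiv -n/c \pmod{q_2}$ collapse the sums over $c$ and over $a \bmod q_2$ into $\sum_{ab = |n|}\chi_1(a)\overline{\chi_2}(b)(\cdots)$; the power $(b/a)^{s-\frac12}$ emerges from the $c$-dependence ($\propto c^{1-2s}$) of $\widehat{f_{cq_2 y}}$ together with the normalizing powers of $|n|$, while combining the $c>0$ and $c<0$ pieces yields the prefactor $\chi_2(\sgn n)$ and the $\sgn(n)$ in the Whittaker index. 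This assembles $\lambda_{\chi_1,\chi_2}(n,s)$ exactly as in \eqref{eq:lambdachi1chi2Def}.

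It remains to evaluate the archimedean integral. For $\xi \neq 0$, $\widehat{f_v}(\xi)$ is the classical integral $\intR (u+iv)^{-s-k/2}(u-iv)^{-s+k/2}e(-\xi u)\,du$, which evaluates (cf.\ \cite{DFI} or tables of integrals) to an explicit product of exponential, power, and Gamma factors times $W_{\frac k2 \sgn(\xi),\, s - \frac12}(4\pi |\xi| v)$; matching these against the completion factor produces precisely the normalizing ratio $\Gamma(s+\tfrac k2)/\Gamma(s+\tfrac k2 \sgn n)$ in the statement. For $\xi = 0$ — which survives only if $q_2=1$, since otherwise $a\equiv 0\pmod{q_2}$ kills $\overline{\chi_2}(a)$ — one has the Beta-type integral $\widehat{f_v}(0) = i^{-k\sgn(v)}\,|v|^{1-2s}\,\tfrac{2^{2-2s}\pi\,\Gamma(2s-1)}{\Gamma(s+k/2)\Gamma(s-k/2)}$, the phase $i^{-k\sgn v}$ arising from the branch of the integrand (e.g.\ via $u = v\tan\theta$, and needed to prevent a spurious vanishing when $\chi_1$ is odd). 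Summing over $c \in \mz\setminus\{0\}$ against $\chi_1(c)$ gives $L(2s-1,\chi_1)\,y^{1-s}$; cancelling $L(2s,\chi_1\chi_2)=L(2s,\chi_1)$, applying the duplication formula to $\Gamma(2s-1)$, and invoking the functional equation of $L(\cdot,\chi_1)$ to pass to $L(2-2s,\overline{\chi_1})$ yields the second line of \eqref{eq:EisensteinChiChiConstantTerm}.

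Assembling the three pieces gives the stated expansion for $\mathrm{Re}(s) > 1$, and it persists for all $s$ by analytic continuation since both sides are meromorphic. The $k=0$ specialization then follows from $W_{0,s-\frac12}(4\pi|n|y) = 2\sqrt{|n|y}\,K_{s-\frac12}(2\pi|n|y)$. I expect the genuinely delicate part to be bookkeeping rather than any single idea: correctly tracking the interlocking Gauss sums $\tau(\chi_2),\tau(\overline{\chi_2}),\tau(\overline{\chi_1})$, the powers of $q_1$ and $q_2$, the factors $i^{-k}$, and — most treacherously — the $\sgn$-dependent phases in the archimedean integrals, all of which only organize correctly after the $c>0$ and $c<0$ sums are recombined via the parity relation $\chi_1(-1)\chi_2(-1)=(-1)^k$.
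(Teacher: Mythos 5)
Your proposal is correct and follows essentially the same route as the paper: unfold $G_{\chi_1,\chi_2}$, use primitivity of $\chi_2$ to evaluate the resulting (twisted) Gauss sum and produce the divisor sum $\lambda_{\chi_1,\chi_2}(n,s)$, evaluate the archimedean integral $\intR (u+iv)^{-s-k/2}(u-iv)^{-s+k/2}e(-\xi u)\,du$ as a Whittaker function, and handle the $y^{1-s}$ part of the constant term via the functional equation of $L(2s-1,\chi_1)$ together with the duplication formula. The only difference is organizational — the paper folds $(c,d)\mapsto(-c,-d)$ at the outset and computes $\int_0^1 G\,e(-nx)\,dx$ by splitting $d$ into progressions mod $cq_2$, whereas you keep both signs of $c$ and reach the same sums via the Gauss-sum expansion of $\chi_2$ followed by Poisson summation in $d$; these are equivalent manipulations, and your observation about the $i^{-k\sgn v}$ phase in the $\xi=0$ integral is exactly the point that the paper's early folding renders automatic.
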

{\bf Convention.}  Here and throughout we take the convention that the principal character modulo $1$ is primitive.

%\red{To do: resolve possible discrepancy with Diamond-Shurman}

%We include a proof for completeness, but we stress that the method is standard.
\begin{proof}
We have
\begin{equation*}
 G_{\chi_1, \chi_2}(z,s) = \sum_{n \in \mz} e(nx) c_n(y), \qquad c_n(y) =  \int_0^{1} G_{\chi_1, \chi_2}(x+iy,s) e(-nx) dx,
\end{equation*}
and inserting \eqref{eq:Gchi1chi2Def}, we have
\begin{equation*}
 c_n(y) = \frac12 \sumprime_{c,d \in \mathbb{Z}} \chi_1(c) \chi_2(d) (q_2 y)^s \int_0^1 \frac{e(-nx)}{|cq_2(x+iy) + d|^{2s}} 
 \Big(\frac{|cq_2(x+iy) + d|}{cq_2(x+iy) + d} \Big)^k
 dx.
\end{equation*}
Extracting the term $c=0$, which only occurs for $q_1 = 1$, we obtain
\begin{equation*}
 c_n(y) = \delta_{q_1=1}  \delta_{n=0} (q_2 y)^s L(2s, \chi_2) 
 + b_n(y),
\end{equation*}
where
\begin{equation*}
 b_n(y) = 
 \sum_{c \geq 1} \sum_{d \in \mz} \chi_1(c) \chi_2(d) (q_2 y)^s \int_0^1 \frac{e(-nx)}{|cq_2(x+iy) + d|^{2s}} 
 \Big(\frac{|cq_2(x+iy) + d|}{cq_2(x+iy) + d} \Big)^k
 dx.
\end{equation*}
Changing variables $x \rightarrow x-\frac{d}{cq_2}$, we obtain
\begin{equation*}
 b_n(y) = (q_2 y)^s   \sum_{c \geq 1}   \frac{\chi_1(c)}{(cq_2)^{2s}} 
 \sum_{d \in \mz}
 \chi_2(d) 
 e\Big(\frac{nd}{c q_2}\Big) 
 \int_{\frac{d}{cq_2}}^{\frac{d}{cq_2} + 1} \frac{e(-nx)}{|x+iy|^{2s}} 
 \Big(\frac{|x+iy |}{x+iy } \Big)^k
 dx.
\end{equation*}
Next break up the sum over $d$ into arithmetic progressions modulo $c q_2$, giving
\begin{equation*}
 b_n(y) = (q_2 y)^s   \sum_{c \geq 1}   \frac{\chi_1(c)}{(cq_2)^{2s}} 
 \sum_{r \shortmod{ c q_2}}
 \chi_2(r) 
 e\Big(\frac{nr}{c q_2}\Big)
 \sum_{\ell \in \mz}
 \int_{\frac{r + \ell cq_2}{cq_2}}^{\frac{r + \ell cq_2}{cq_2} + 1} \frac{e(-nx)}{|x+iy|^{2s}} \Big(\frac{|x+iy |}{x+iy } \Big)^k dx.
\end{equation*}
The sum over $\ell$ forms the complete integral over $\mathbb{R}$, and the sum over $r$ satisfies
\begin{equation*}
 \sum_{r \shortmod{ c q_2}}
 \chi_2(r) 
 e\Big(\frac{nr}{c q_2}\Big) = c \delta(c|n) \tau(\chi_2, n/c) = c \delta(c|n) \overline{\chi_2}(n/c) \tau(\chi_2),
\end{equation*}
where the final equation holds for all $n$ provided $\chi_2$ is primitive (see \cite[(3.12)]{IK}).  
The integral may be evaluated with \cite[3.384.9]{GR}, but it takes some care to transform the integral into this template.  We have
\begin{equation*}
 \intR \frac{e(-nx)}{|x+iy|^{2s}}
 \Big(\frac{|x+iy |}{x+iy } \Big)^k 
 dx = 
 y^{1-2s} \intR \frac{e(-nxy)}{|x+i|^{2s-k}} (x+i)^{-k} dx.
 \end{equation*}
 Then we note $(x+i)^{-k} = i^{-k} (1-ix)^{-k}$, $|x+i|^{2s-k} = |1+ix|^{2s-k} = (1+ix)^{s-\frac{k}{2}} (1-ix)^{s-\frac{k}{2}}$, which is valid since $\arg(1 \pm ix) \in (-\pi/2, \pi/2)$.  Thus the integral we want is
 \begin{equation*}
  y^{1-2s} i^{-k} \intR \frac{\exp(-2\pi i nxy) dx}{(1+ix)^{s-\frac{k}{2}} (1-ix)^{s+\frac{k}{2}} } 
  =  y^{1-2s} i^{-k} 
  \frac{ (2 \pi) 2^{-s} (2 \pi |n| y)^{s-1}}{\Gamma(s + \frac{k}{2} \sgn(n))}
  W_{\frac{k}{2} \sgn(n), s-\frac12}(4 \pi |n| y),
 \end{equation*}
valid for $n \neq 0$ and $\text{Re}(s) > \frac12$.  
% NB: Supported by numerical integration, I believe that the minus sign in the $y>0$ case of \cite[(12), p.119]{Erdelyi} is erroneous .
 This simplifies as
 \begin{equation*}
 \frac{y^{-s} i^{-k} \pi^s  |n|^{s-1}}{\Gamma(s+\frac{k}{2} \sgn(n))}  W_{\frac{k}{2} \sgn(n), s-\frac12}(4 \pi |n| y).
 \end{equation*}
We note the special cases (see \cite[(9.235.2)]{GR}, \cite[(4.21)]{DFI}):
\begin{equation}
\label{eq:WhittakerDef}
 W_{\frac{k}{2} \sgn(n), s-\frac12}(4 \pi |n| y)
 = \begin{cases}
    2 \sqrt{|n| y} K_{s-\frac12}(2 \pi |n| y)  \qquad &k=0, n \neq 0, \\
   (4 \pi n y)^{k/2} \exp(- 2 \pi n y),    \qquad &s= k/2 > 0, \thinspace n \geq 1.
   \end{cases}
\end{equation}
 %\frac{e(-nxy)}{|x+i|^{2s}} \Big(\frac{x-i}{x+i}\Big)^{k/2} dx
%  \begin{equation}
%  = y^{1-2s} \begin{cases}
%      \sqrt{\pi} \frac{\Gamma(s-\frac12)}{\Gamma(s)}, \qquad n = 0 \\
%      \frac{2 \pi^s |ny|^{s-\frac12}}{\Gamma(s)} K_{s-\frac12}(2 \pi |n| y), \quad n \neq 0.
%    \end{cases}
% \end{equation}
When $n=0$, we have from \cite[(8.381.1)]{GR}
\begin{equation*}
 \intR \frac{1}{|x+iy|^{2s}}
 \Big(\frac{|x+iy |}{x+iy } \Big)^k 
 dx = i^{-k} (2 \pi)  (2y)^{1-2s} \frac{\Gamma(2s-1)}{\Gamma(s+\tfrac{k}{2}) \Gamma(s-\tfrac{k}{2})}.
\end{equation*}

Thus, $b_0(y) = 0$ if $q_2 > 1$, and for $n \neq 0$, we have
\begin{equation*}
 b_n(y) = 
 (q_2 y)^s   \sum_{c \geq 1}   \frac{\chi_1(c)}{(cq_2)^{2s}} 
 c \delta(c|n) \overline{\chi_2}(n/c) \tau(\chi_2)
 \frac{y^{-s} i^{-k} \pi^s |n|^{s-1} }{\Gamma(s + \frac{k}{2} \sgn(n))}  W_{\frac{k}{2} \sgn(n), s-\frac12}(4 \pi |n| y).
\end{equation*}
Simplifying gives the desired formula for the nonzero Fourier coefficients.

For the constant term, we obtain
\begin{equation*}
 c_0(y) = \delta_{q_1=1}  (q_2 y)^s L(2s, \chi_2)
 + \delta_{q_2=1} y^{1-s} L(2s -1, \chi_1)
 i^{-k} \frac{(2 \pi) 2^{1-2s} \Gamma(2s-1)}{\Gamma(s+\tfrac{k}{2}) \Gamma(s-\tfrac{k}{2})}.
\end{equation*}
We now simplify this.  The functional equation of $L(2s-1,\chi_1)$ gives
\begin{equation*}
 L(2s-1, \chi_1) = \frac{\sqrt{q_1} }{i^{-\delta_1} \tau(\overline{\chi_1}) } \Big(\frac{q_1}{\pi}\Big)^{\frac32 - 2s} \frac{\Gamma(1-s+\tfrac{\delta_1}{2})}{\Gamma(s-\tfrac12 +\tfrac{\delta_1}{2})} L(2 -2s , \overline{\chi_1}),
\end{equation*}
where $\delta_1 = 0$ if $\chi_1(-1) = 1$, and $\delta_1 = 1$ if $\chi_1(-1) = -1$.
Also, note
\begin{equation*}
 \Gamma(2s-1) = \pi^{-1/2} 2^{2s-2} \Gamma(s-\tfrac12) \Gamma(s).
\end{equation*}
Therefore,
\begin{equation*}
\begin{split}
 c_0(y) =  
 &\delta_{q_1=1}  q_2^s L(2s, \chi_2) y^s
 \\
 + &\delta_{q_2=1}  q_1^{2-2s} \frac{ \pi^{2s-1} }{i^{-\delta_1} \tau(\overline{\chi_1}) }  \frac{\Gamma(1-s+\tfrac{\delta_1}{2})}{\Gamma(s-\tfrac12 +\tfrac{\delta_1}{2})} 
 i^{-k} \frac{   \Gamma(s-\tfrac12) \Gamma(s)}{\Gamma(s+\tfrac{k}{2}) \Gamma(s-\tfrac{k}{2})}
 L(2-2s, \overline{\chi_1}) y^{1-s}.
\end{split}
\end{equation*}
Consider
\begin{equation*}
 \gamma(s,\delta_1,k) = \frac{ \Gamma(s-\tfrac12) \Gamma(s) \Gamma(1-s+\tfrac{\delta_1}{2})}{\Gamma(s-\tfrac12 +\tfrac{\delta_1}{2}) \Gamma(s-\tfrac{k}{2})\Gamma(1-s+\tfrac{k}{2})}.
\end{equation*}
Note that in our application, $\delta_1 \equiv k \pmod{2}$.
Using standard gamma function identities, we obtain
\begin{equation*}
 \gamma(s,\delta_1,k) = 
 \begin{cases}
             (-1)^{k/2}, \qquad &k \text{ even}, \\
             (-1)^{(k-1)/2}, \qquad &k \text{ odd},
             \end{cases}
\end{equation*}
whence $i^{-k+\delta_1} \gamma(s,\delta_1,k) = 1$.  Therefore, 
\begin{equation*}
\begin{split}
 c_0(y) =  
 \delta_{q_1=1}  q_2^s L(2s, \chi_2) y^s
 +  \delta_{q_2=1}  q_1^{2-2s} \frac{ \pi^{2s-1} }{ \tau(\overline{\chi_1}) } 
 \frac{\Gamma(1-s+\tfrac{k}{2})}{\Gamma(s+\tfrac{k}{2})}
 L(2-2s, \overline{\chi_1}) y^{1-s}.
\end{split}
\end{equation*}
Then using $e_{\chi_1, \chi_2}^*(y,s) = \frac{(q_2/\pi)^s}{i^{-k} \tau(\chi_2)} \Gamma(s+\frac{k}{2}) c_0(y)$ completes the proof.
\end{proof}

  Note that
\begin{equation}
\label{eq:alphasum}
 \sum_{n=1}^{\infty} \frac{\lambda_{\chi_1,\chi_2}(n,s)}{n^u} = L(u+s-\tfrac12 , \chi_1) L(u + \tfrac12-s, \overline{\chi_2}).
\end{equation}
\subsection{Functional equations}
The Fourier coefficient satisfies the functional equation
\begin{equation}
\label{eq:FourierCoefficientFunctionalEquation}
 \lambda_{\chi_1,\chi_2}(n,1-s) = (\chi_1 \chi_2)(\sgn(n)) \lambda_{\overline{\chi_2},\overline{\chi_1}}(n,s).
\end{equation}
As for the Eisenstein series itself, we have
\begin{myprop}
 Suppose $\chi_i$ is primitive modulo $q_i$, $i=1,2$, and $(\chi_1 \chi_2)(-1) = (-1)^k$.  Then $E_{\chi_1, \chi_2}^*(z,s)$ extends to a meromorphic function for all $s \in \mathbb{C}$.  Moreover, it satisfies the functional equation
 \begin{equation}
 \label{eq:Echi1chi2FunctionalEquation}
E^*_{\chi_1,\chi_2}(z,s) %:=  \frac{(q_2/\pi)^s}{ i^{-k} \tau(\chi_2)} \Gamma(s+\tfrac{k}{2}) L(2s,\chi_1 \chi_2) E_{\chi_1,\chi_2}(z,s)  
=  E^*_{\overline{\chi_2}, \overline{\chi_1}}(z,1-s).
 \end{equation}
 If $k \geq 0$ and $q_1 q_2 > 1$, then $E_{\chi_1, \chi_2}^*(z,s)$ is analytic for all $s \in \mc$.
\end{myprop}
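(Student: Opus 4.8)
The plan is to extract everything from the Fourier expansion of Proposition~\ref{prop:FourierExpansion}, which already writes $E^*_{\chi_1,\chi_2}(z,s)$ as an explicit combination of elementary functions of $y$, Gamma factors, Dirichlet $L$-functions, and Whittaker functions. For the meromorphic continuation, the constant term $e^*_{\chi_1,\chi_2}(y,s)$ of \eqref{eq:EisensteinChiChiConstantTerm} is visibly meromorphic, being built from $y^{s},y^{1-s}$, powers of $q_1,q_2,\pi$, the Gamma functions $\Gamma(s+\tfrac k2),\Gamma(1-s+\tfrac k2)$, and $L(2s,\chi_2),L(2-2s,\overline{\chi_1})$. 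For the non-constant part I would note that $\lambda_{\chi_1,\chi_2}(n,s)$ is a finite Dirichlet polynomial, hence entire in $s$ with $|\lambda_{\chi_1,\chi_2}(n,s)|\ll_{K}|n|^{A}$ for $s$ in a compact set $K$; that $W_{\frac{k}{2}\sgn(n),s-\frac12}(4\pi|n|y)$ is entire in $s$ and satisfies $|W_{\frac{k}{2}\sgn(n),s-\frac12}(4\pi|n|y)|\ll_{K,y}|n|^{B}e^{-2\pi|n|y}$; and that $\Gamma(s+\tfrac k2)/\Gamma(s+\tfrac k2\sgn(n))$ equals $1$ for $n>0$ and equals $\Gamma(s+\tfrac k2)/\Gamma(s-\tfrac k2)$ for $n<0$, the latter being meromorphic with finitely many poles that do not depend on $n$. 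Hence the series over $n\neq0$ converges absolutely and locally uniformly off those finitely many points and defines a meromorphic function, so $E^*_{\chi_1,\chi_2}(z,s)$ is meromorphic on $\mathbb{C}$.

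For the functional equation I would compare the Fourier expansions of $E^*_{\chi_1,\chi_2}(z,s)$ and $E^*_{\overline{\chi_2},\overline{\chi_1}}(z,1-s)$; since both are meromorphic by the first part, it suffices to match Fourier coefficients. Note first that $(\overline{\chi_2}\,\overline{\chi_1})(-1)=(\chi_1\chi_2)(-1)=(-1)^k$, so $E^*_{\overline{\chi_2},\overline{\chi_1}}$ is defined with the same weight $k$. For the constant term, substituting $(\chi_1,\chi_2,s)\mapsto(\overline{\chi_2},\overline{\chi_1},1-s)$ in \eqref{eq:EisensteinChiChiConstantTerm} simply interchanges its two summands (using $\tau(\overline{\overline{\chi_i}})=\tau(\chi_i)$), giving $e^*_{\chi_1,\chi_2}(y,s)=e^*_{\overline{\chi_2},\overline{\chi_1}}(y,1-s)$. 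For $n\neq0$, the Whittaker functions already agree since $W_{\alpha,\beta}=W_{\alpha,-\beta}$ forces $W_{\frac{k}{2}\sgn(n),-s+\frac12}=W_{\frac{k}{2}\sgn(n),s-\frac12}$, so it remains to verify
\[
\lambda_{\chi_1,\chi_2}(n,s)\,\frac{\Gamma(s+\tfrac k2)}{\Gamma(s+\tfrac k2\sgn(n))}=\lambda_{\overline{\chi_2},\overline{\chi_1}}(n,1-s)\,\frac{\Gamma(1-s+\tfrac k2)}{\Gamma(1-s+\tfrac k2\sgn(n))}.
\]
Applying \eqref{eq:FourierCoefficientFunctionalEquation} to the pair $(\overline{\chi_2},\overline{\chi_1})$ and using that $(\chi_1\chi_2)(\sgn(n))\in\{1,(-1)^k\}$ is real gives $\lambda_{\overline{\chi_2},\overline{\chi_1}}(n,1-s)=(\chi_1\chi_2)(\sgn(n))\,\lambda_{\chi_1,\chi_2}(n,s)$, so the identity reduces to $\Gamma(s+\tfrac k2)/\Gamma(s+\tfrac k2\sgn(n))=(\chi_1\chi_2)(\sgn(n))\,\Gamma(1-s+\tfrac k2)/\Gamma(1-s+\tfrac k2\sgn(n))$, which is trivial for $n>0$ and, for $n<0$, follows from $\Gamma(w)\Gamma(1-w)=\pi/\sin\pi w$ once one checks $\sin\pi(s-\tfrac k2)/\sin\pi(s+\tfrac k2)=(-1)^k=(\chi_1\chi_2)(-1)$, treating $k$ even and $k$ odd separately. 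This establishes \eqref{eq:Echi1chi2FunctionalEquation}.

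Finally, suppose $k\geq0$ and $q_1q_2>1$. When $k\geq0$ the ratio $\Gamma(s+\tfrac k2)/\Gamma(s+\tfrac k2\sgn(n))$ is $1$ for $n>0$ and is $\Gamma(s+\tfrac k2)/\Gamma(s-\tfrac k2)$, a polynomial in $s$ of degree $k$ (by iterating $\Gamma(w+1)=w\Gamma(w)$), for $n<0$; so the non-constant part of the Fourier expansion is entire. For the constant term, $q_1q_2>1$ forces at least one of $\delta_{q_1=1},\delta_{q_2=1}$ to vanish; if both vanish then $e^*_{\chi_1,\chi_2}\equiv0$, and if, say, $q_1=1<q_2$ then only the first summand of \eqref{eq:EisensteinChiChiConstantTerm} survives. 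Writing $L(2s,\chi_2)=(\pi/q_2)^{s+\delta_2/2}\Lambda(2s,\chi_2)/\Gamma(s+\tfrac{\delta_2}{2})$, where $\delta_2\in\{0,1\}$ is the parity of $\chi_2$ and $\Lambda$ is the completed Dirichlet $L$-function (entire, since $\chi_2$ is a non-principal primitive character), and using $\delta_2\equiv k\pmod{2}$ with $0\leq\delta_2\leq k$, one finds $\Gamma(s+\tfrac k2)/\Gamma(s+\tfrac{\delta_2}{2})$ is a polynomial, so this summand is entire; the case $q_2=1<q_1$ is symmetric. Hence $E^*_{\chi_1,\chi_2}(z,s)$ is analytic on $\mathbb{C}$.

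None of these steps is deep — the whole argument is to read off consequences of the Fourier expansion — and the character-swap bookkeeping and the gamma identities are routine. I expect the only mildly delicate point to be the justification of term-by-term holomorphy and locally uniform convergence of the non-constant part, i.e.\ controlling $W$ and $\lambda_{\chi_1,\chi_2}$ uniformly for $s$ in compact sets; but this is standard given the exponential decay $e^{-2\pi|n|y}$ of the Whittaker function.
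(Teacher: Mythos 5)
Your proposal is correct and follows essentially the same route as the paper: meromorphic continuation read off from the Fourier expansion, the functional equation obtained by matching coefficients via \eqref{eq:FourierCoefficientFunctionalEquation} together with the gamma-ratio identity \eqref{eq:GammaFunctionalEquation} (which you rederive from the reflection formula), and analyticity for $k\geq 0$, $q_1q_2>1$ from the cancellation of the $\Gamma(s+\tfrac k2)$ poles against the trivial zeros of the Dirichlet $L$-function, which you phrase equivalently via the completed $L$-function. No gaps.
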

Remark. For $k<0$, the multiplication by $\Gamma(s+\frac{k}{2})$ produces some poles of $E_{\chi_1,\chi_2}^*(z,s)$.  If desired, one could form the completed Eisenstein series by multiplication by $\Gamma(s-\frac{k}{2})$ instead of $\Gamma(s+\frac{k}{2})$, leading to a slightly different functional equation following from \eqref{eq:GammaFunctionalEquation} below.

\begin{proof}
 First consider the case with $q_1, q_2 > 1$.
 From the Fourier expansion, we have
 \begin{equation*}
 \begin{split}
  E^*_{\chi_1,\chi_2}(z,1-s)
  &= 
  \sum_{n=1}^{\infty} 
   \frac{\lambda_{\chi_1,\chi_2}(n,1-s)}{\sqrt{n}} e(nx)  W_{\frac{k}{2} , s-\frac12}(4 \pi |n| y)
  \\
  &+ 
   \sum_{n=1}^{\infty}
   \frac{\lambda_{\chi_1,\chi_2}(-n,1-s)}{\sqrt{n}} e(-nx) \frac{\Gamma(1-s+\frac{k}{2}) }{\Gamma(1-s-\frac{k}{2})}  W_{-\frac{k}{2} , s-\frac12}(4 \pi |n| y).
   \end{split}
 \end{equation*}
 Since the Whittaker function has exponential decay, uniformly for $s$ on compact sets, we see that $E_{\chi_1,\chi_2}^*(z,1-s)$ extends to a meromorphic function with possible poles at $s=\tfrac{k}{2} + \ell$ with $\ell$ a nonnegative integer
 
Using \eqref{eq:FourierCoefficientFunctionalEquation}, $\chi_1(-1) \chi_2(-1) = (-1)^k$, and
\begin{equation}
\label{eq:GammaFunctionalEquation}
 \frac{\Gamma(1-s+\frac{k}{2}) }{\Gamma(1-s-\frac{k}{2})}  = (-1)^k \frac{\Gamma(s+\frac{k}{2}) }{\Gamma(s-\frac{k}{2})},
\end{equation}
we obtain the functional equation.  Moreover, the poles at $1-s$ with  $s=\frac{k}{2}+\ell$ are removable, provided $k \geq 0$.

Next consider the case with $q_1 = 1$ or $q_2 = 1$.  The non-constant terms in the Fourier expansion have the same analytic properties as in the case with $q_1, q_2 > 1$, so it suffices 
to examine the constant term $e_{\chi_1,\chi_2}^*(y,s)$.  
% We have
% \begin{equation}
% \begin{split}
% e_{\chi_1,\chi_2}^*(y,s) = &\delta_{q_1=1} \frac{\pi^{-s}}{ i^{-k} \tau(\chi_2)} \Gamma(s+\tfrac{k}{2}) q_2^{2s} L(2s, \chi_2) y^s
%  \\
%  + &\delta_{q_2=1}   \frac{ \pi^{-(1-s)}}{i^{-k} \tau(\overline{\chi_1}) }
%  \Gamma(1-s+\tfrac{k}{2})
%  q_1^{2-2s}
%  L(2-2s, \overline{\chi_1}) y^{1-s}.
% \end{split}
% \end{equation}
By inspection of \eqref{eq:EisensteinChiChiConstantTerm}, it satisfies the functional equation
\begin{equation*}
 e_{\chi_1, \chi_2}^*(y,1-s) = e_{\overline{\chi_2}, \overline{\chi_1}}^*(y,s).
\end{equation*}
The coefficient of $y^s$ is analytic except possibly for poles at $s = -\frac{k}{2} - \ell$, with $\ell$ a nonnegative integer.  If $k \geq 0$ then these poles are cancelled by the trivial zeros of $L(2s, \chi_2)$.  Similarly, the coefficient of $y^{1-s}$ is analytic except for possible poles at $s= \frac{k}{2} + 1 + \ell$, with $\ell$ a nonnegative integer.  These points occur at $2-2s = -k - 2 \ell$ and are also cancelled by trivial zeros of $L(2-2s, \overline{\chi_1})$, provided $k \geq 0$.
% With these, and other simplifications, we obtain
% \begin{equation}
% \begin{split}
% e_0^*(y,s) = &\delta_{q_1=1} \frac{\pi^{-s}}{ i^{-k} \tau(\chi_2)} \Gamma(s+\tfrac{k}{2}) q_2^{2s} L(2s, \chi_2) y^s
%  \\
%  + &\delta_{q_2=1}   \frac{ \pi^{-(1-s)}}{i^{-\delta_1} \tau(\overline{\chi_1}) }
%  \Gamma(1-s+\tfrac{k}{2})
%  \gamma(s,\delta_1,k) 
%  q_1^{2-2s}
%  L(2-2s), \overline{\chi_1}) y^{1-s},
% \end{split}
% \end{equation}
% 
%   In all, we derive
% 
% 
\end{proof}

 We are also interested in the Mellin transform of $E_{\chi_1,\chi_2}^*(iy,s)$ and its functional equation.  A motivation for this explicit calcluation comes from the evaluation of restriction norms of automorphic forms, as in \cite{YoungQUE}.
  See \cite[Section 8]{DFI} for this calculation with cusp forms.
 The reader only interested in the change of basis formulas may wish to skip these calculations.
 
 Assume that $\chi_1$ and $\chi_2$ are primitive with $q_1, q_2 >1$, so the constant term vanishes.  We have
 \begin{multline}
\label{eq:Mellin}
 \int_0^{\infty} E^*_{\chi_1,\chi_2}(iy,s) (q_1 q_2)^{u/2} y^u \frac{dy}{y} 
\\ 
 =  (q_1 q_2)^{u/2}  \sum_{n=1}^{\infty} 
 \frac{\lambda_{\chi_1, \chi_2}(n,s)}{\sqrt{n}} 
 \int_0^{\infty} 
 \Big(W_{\frac{k}{2}, s-\frac12}(4 \pi n y) + \varepsilon_2 \frac{\Gamma(s+\frac{k}{2})}{\Gamma(s-\frac{k}{2})} W_{-\frac{k}{2}, s-\frac12}(4 \pi n y)
 \Big) y^{u} \frac{dy}{y},
\end{multline}
with $\varepsilon_i = \chi_i(-1)$, for $i=1,2$.  Changing variables $y \rightarrow \frac{y}{\pi n}$, and evaluating the Dirichlet series using \eqref{eq:alphasum}, we have
\begin{multline}
\label{eq:EisensteinCompletedMellinTransform}
 \int_0^{\infty} E^*_{\chi_1,\chi_2}(iy,s) (q_1 q_2)^{u/2} y^u \frac{dy}{y}
 \\
 = 
 \frac{1}{\sqrt{\pi}} \Big(\frac{q_1}{\pi}\Big)^{\frac{u}{2}} 
 \Big(\frac{q_2}{\pi}\Big)^{\frac{u}{2}} 
 L(\tfrac12 + u+s-\tfrac12 , \chi_1) L(\tfrac12 + u + \tfrac12-s, \overline{\chi_2})
 \Phi_k^{\varepsilon_2}(u+\tfrac12,s-\tfrac12),
\end{multline}
where as in \cite[(8.25)]{DFI}
\begin{equation}
\label{eq:PhikDef}
 \Phi_k^{\varepsilon}(\alpha,\beta)
 = \sqrt{\pi} \int_0^{\infty} 
 \Big(W_{\frac{k}{2}, \beta}(4  y) + \varepsilon \frac{\Gamma(\beta+\frac{1+k}{2})}{\Gamma(\beta+\frac{1-k}{2})} W_{-\frac{k}{2}, \beta}(4  y)
 \Big) y^{\alpha-\frac12} \frac{dy}{y}.
\end{equation}
Actually, our definition of $\Phi_k^{\varepsilon}$ differs from \cite{DFI} in that we have not divided by $4$ in the right hand side of \eqref{eq:PhikDef}.
This integral is evaluated in \cite[Lemma 8.2]{DFI}, for $k \geq 0$, but the value stated there is not quite correct.  The correct formula is
\begin{equation}
\label{eq:PhikCorrectedFormula}
 \Phi_k^{\varepsilon}(\alpha,\beta)
 =  p_k^{\varepsilon}(\alpha,\beta)
 \Gamma\Big(\frac{\alpha+\beta + \frac{1- \varepsilon (-1)^k}{2}}{2}\Big)
 \Gamma\Big(\frac{\alpha-\beta + \frac{1-\varepsilon}{2}}{2}\Big),
\end{equation}
 and $p_k^{\varepsilon}(\alpha,\beta)$ is a certain polynomial in $\alpha$, defined recursively.  The formula of \cite{DFI} is incorrect in a few ways.  One simple mistake is that it is off by a factor of
 $4$, explaining our change in definition, and the more important error boils down to interchanging $\frac{1-\varepsilon}{2}$ and $\frac{1-\varepsilon(-1)^k}{2}$ in the gamma factors, which arises from a typo early in the calculation of \cite{DFI}.  A  side effect of this typo is that the formula for $p_k^{\varepsilon}(\alpha,\beta)$ needs correction.
 We have devoted Section \ref{section:WhittakerMellinTransform} to correcting the evaluation of $\Phi_k^{\varepsilon}$.
 
  For $k=0,1$, the polynomial is given by
 \begin{equation*}
  p_0^{\varepsilon}(\alpha,\beta) = \tfrac{1+\varepsilon}{2},
  \qquad
  p_1^{\varepsilon}(\alpha,\beta) = 1.
 \end{equation*}
 Note that if $\varepsilon = \varepsilon_2$, then $\varepsilon_2 (-1)^k = \varepsilon_1$, and so in our desired application the gamma factors match those of the Dirichlet $L$-functions appearing in \eqref{eq:EisensteinCompletedMellinTransform} (only after the correction \eqref{eq:PhikCorrectedFormula}), keeping in mind how the gamma factor depends on the parity of the character.
 
Define $\Lambda(s,\chi) = (q/\pi)^{s/2} \Gamma(\frac{s+\delta}{2}) L(s,\chi)$ for a primitive Dirichlet character modulo $q$, with $\delta = \frac{1-\chi(-1)}{2}$.  Then, for $k=0,1$, we have
\begin{equation*}
 \int_0^{\infty} E^*_{\chi_1,\chi_2}(iy,s) (q_1 q_2)^{u/2} y^u \frac{dy}{y}
 = \delta_3  
 q_1^{\frac{-s}{2}} 
 q_2^{\frac{s-1}{2}} 
 \Lambda(\tfrac12 + u+s-\tfrac12 , \chi_1) \Lambda(\tfrac12 + u + \tfrac12-s, \overline{\chi_2}),
\end{equation*}
where $\delta_3 = 1$ 
unless $k=0$ and $\varepsilon_1 = \varepsilon_2 = -1$, in which case $\delta_3 = 0$.

From the functional equation $\Lambda(s,\chi) = \epsilon(\chi) \Lambda(1-s,\overline{\chi})$, we see that \eqref{eq:Mellin} satisfies
\begin{equation*}
 \int_0^{\infty} E_{\chi_1, \chi_2}^*(iy,s) (q_1 q_2)^{u/2} y^u \frac{dy}{y} = \epsilon(\chi_1) \epsilon(\overline{\chi_2}) 
 q_1^{\frac12-s} q_2^{s-\frac12}
 \int_0^{\infty} E_{\chi_2, \chi_1}^*(iy,s) (q_1 q_2)^{-u/2} y^{-u} \frac{dy}{y}.
\end{equation*}
%Note that $\epsilon(\chi) \epsilon(\overline{\chi}) = 1$ by applying the functional equation twice in a row.  
On the other hand, if we apply the Fricke involution to $E_{\chi_1,\chi_2}^*$, which maps $y$ to $\frac{1}{q_1 q_2 y}$, then we get that
\begin{equation*}
 \int_0^{\infty} E^*_{\chi_1,\chi_2}(iy,s) (q_1 q_2)^{u/2} y^u \frac{dy}{y} = \int_0^{\infty}  E^*_{\chi_1,\chi_2} \Big(\frac{i}{q_1 q_2 y},s\Big) (q_1 q_2)^{-u/2} y^{-u} \frac{dy}{y}.
\end{equation*}
Hence for $\delta_3 = 1$, we have
\begin{equation}
\label{eq:FrickeFormula1}
E^*_{\chi_1,\chi_2} \Big(\frac{i}{q_1 q_2 y},s\Big) = \epsilon(\chi_1) \epsilon(\overline{\chi_2}) q_1^{\frac12-s} q_2^{s-\frac12}   E_{\chi_2,\chi_1}^*(iy,s).
\end{equation}
In Section \ref{section:AtkinLehnerEigenvalues}, we will explicitly derive the action of all the Atkin-Lehner operators for $\chi_i$ even or odd, which will generalize \eqref{eq:FrickeFormula1}.  This provides a pleasant consistency check.
%, which in fact allowed the author to uncover some subtle sign errors.
 
% \begin{multline}
% \label{eq:Mellin}
%  \int_0^{\infty} E^*_{\chi_1,\chi_2}(iy,s) (q_1 q_2)^{u/2} y^u \frac{dy}{y} = 4 (q_1 q_2)^{u/2}  \sum_{n=1}^{\infty} \tau_{\chi_1, \chi_2,s}(n) \int_0^{\infty} K_{s-\frac12}(2 \pi |n| y) y^{u+\frac12} \frac{dy}{y} 
%  \\
%  = c q_1^{-\frac{s}{2}} q_2^{\frac{s-1}{2}}  \Lambda(\tfrac12 + u + s-\tfrac12,\chi_1) \Lambda(\tfrac12 + u+\tfrac12-s, \overline{\chi_2}),
% \end{multline}
%For $\chi_i$ odd, then $E_{\chi_1,\chi_2}^*(iy) =0$ and the usual procedure is to consider the derivative in order to produce the completed $L$-function.  We omit this analysis for brevity.

\subsection{Hecke operators}
One simple consequence of the explicit calculation of the Fourier expansion, particularly the Euler product formula implied by \eqref{eq:alphasum}, is that this shows that $E_{\chi_1, \chi_2}(z,s)$ is an eigenfunction of all the Hecke operators, including $p|q_1 q_2$.   %The reader may consult \cite[Theorem 7]{Ogg} for a proof in the holomorphic setting which may be easily adapted to Maass forms.  
We have that
\begin{equation}
T_n E_{\chi_1,\chi_2}(z,1/2+it) = 
%\Big(\frac{\chi_1(p)}{p^{it}} + \frac{\overline{\chi_2}(p)}{p^{-it}}\Big) 
\lambda_{\chi_1,\chi_2}(n,1/2+it)
E_{\chi_1,\chi_2}(z,1/2+it).
\end{equation}
In particular, if $p|(q_1, q_2)$ then $T_p E_{\chi_1, \chi_2} = 0$.

\subsection{Holomorphic forms}
It would be negligent not to extract information on the holomorphic Eisenstein series defined by \eqref{eq:EholomorphicDef}.  
Formally specializing $s=k/2$, we obtain
\begin{equation*}
  E_{\chi_1, \chi_2}^*(z,k/2) = 
 e_{\chi_1,\chi_2}^*(y,k/2) +  
 \sum_{n=1}^{\infty} 
 \frac{\lambda_{\chi_1,\chi_2}(n,k/2)}{\sqrt{n}} e(nx)  (4 \pi n y)^{k/2} \exp(- 2 \pi n y),
\end{equation*}
using \eqref{eq:WhittakerDef} to simplify the Whittaker function. 
In the constant term, we have $\delta_{q_2=1} L(2-k, \overline{\chi_1}) = 0$ for $k \geq 2$.  Therefore, for $k \geq 2$ we have
% \begin{equation*}
%  (q_2 y)^{-k/2} E_{\chi_1, \chi_2}^*(z,k/2)
%  = \delta_{q_1 =1} 
%  \Big(\frac{q_2}{\pi}\Big)^{k/2} \frac{\Gamma(k) L(k, \chi_2) }{i^{-k} \tau(\chi_2)} 
%  + \Big(\frac{q_2}{4\pi}\Big)^{-k/2} \sum_{n=1}^{\infty} \lambda_{\chi_1, \chi_2}(n,k/2) n^{\frac{k-1}{2}} e(nz).
% \end{equation*}
% Hence
\begin{multline*}
\frac{(\frac{q_2}{2 \pi})^{k}  \Gamma(k) L(k, \chi_1 \chi_2)}{i^{-k} \tau(\chi_2)} E_{\chi_1, \chi_2, k}(z) = 
\delta_{q_1=1} \frac{(\frac{q_2}{2 \pi})^{k}  \Gamma(k) L(k, \chi_1 \chi_2)}{i^{-k} \tau(\chi_2)}
\\
+ 
\sum_{n=1}^{\infty} \lambda_{\chi_1, \chi_2}(n,k/2) n^{\frac{k-1}{2}} e(nz).
\end{multline*}
Compare with \cite[Theorem 4.5.1]{DiamondShurman}, and note 
\begin{equation*}
 \lambda_{\chi_1,\chi_2}(n,k/2) n^{\frac{k-1}{2}} = \sum_{ab=n} \chi_1(a) \overline{\chi_2}(b) b^{k-1}.
\end{equation*}
% 
% 
% and using $s=k/2$, and assuming $q_1, q_2 > 1$, we have \red{FIX THIS}
% \begin{multline}
%  \Big(\frac{q_2}{2\pi}\Big)^{k} \Gamma(k)  L(k, \chi_1 \chi_2) E_{\chi_1, \chi_2,k}(z) 
%  = 
%  \Big(\frac{q_2}{2\pi}\Big)^k \Gamma(k) L(k,  \chi_2) \delta_{q_1 = 1} 
%  \\
%  + 
%  i^{-k} \tau(\chi_2)    \sum_{n \geq 1} \lambda_{\chi_1,\chi_2}(n,k/2) n^{\frac{k-1}{2}} e(nz).
% \end{multline}
Since the original definition of $E_{\chi_1, \chi_2}(z,s)$ converges absolutely for $\text{Re}(s) > 1$, then certainly we may set $s = k/2$ for $k \geq 3$.  When $k=2$, then since the completed Eisenstein series is entire in $s$, we may also set $s=k/2 = 1$, except when $q_1 = q_2=1$ in which case the level $1$ Eisenstein series (with $\chi_1 = \chi_2 = 1$) has a pole at $s=1$.  See \cite[Section 4.6]{DiamondShurman} for a description of the linear space of weight $2$ Eisenstein series.

We may also examine $k=1$.  Suppose $q_1, q_2 > 1$ for ease of discussion; then we may set $s=k/2 = 1/2$.
One interesting feature here is that for weight $1$, $\lambda_{\chi_1, \chi_2}(n,1/2) = \lambda_{\overline{\chi_2}, \overline{\chi_1}}(n,1/2)$ for $n \geq 1$ (see \eqref{eq:FourierCoefficientFunctionalEquation}) showing that $E_{\chi_1, \chi_2, 1}(z)$ is a scalar multiple of $E_{\overline{\chi_2}, \overline{\chi_1},1}(z)$.  This corresponds, roughly, to the fact that the dimension of the space of holomorphic Eisenstein series of weight $1$ is about half that of the corresponding space of odd weight $k \geq 3$.  See \cite[Section 4.8]{DiamondShurman} for precise statements.

\section{Preliminary formulas}
Now we embark on proving the change of basis formulas.
\subsection{Primitive and non-primitive}
\begin{mylemma}
\label{lemma:EisensteinPrimitivevsNonPrimitive}
Suppose that $\chi_i$ has modulus $q_i$, and is induced by the primitive character $\chi_i^*$ of modulus $q_i^*$.  Then
\begin{equation*}
E_{\chi_1, \chi_2}(z,s) = \frac{L(2s, \chi_1^* \chi_2^*)}{L(2s, \chi_1 \chi_2)}
\sum_{a|q_1} \sum_{b|q_2} \frac{\mu(a) \chi_1^*(a) \mu(b) \chi_2^*(b)}{(ab)^s}
E_{\chi_1^*, \chi_2^*}\Big(\frac{a q_2 }{b q_2^* } z, s\Big).
\end{equation*}
\end{mylemma}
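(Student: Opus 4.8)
The plan is to pass to the auxiliary functions $G_{\chi_1,\chi_2}(z,s) = L(2s,\chi_1\chi_2)\,E_{\chi_1,\chi_2}(z,s)$ of \eqref{eq:Gchi1chi2Def}, prove the corresponding identity for them — where the coprimality condition is gone and rearrangements are transparent — and then divide through by the appropriate $L$-value at the end. All manipulations are carried out in the region $\mathrm{Re}(s)>1$, where the defining double series converge absolutely, and the resulting identity then propagates to all $s$ by meromorphic continuation of both sides (which holds by the functional equation / Fourier expansion established earlier).

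The first ingredient is the elementary character identity. Since $\chi_i$ is induced by the primitive $\chi_i^*$ (and $q_i^*\mid q_i$), one has $\chi_i(c)=\chi_i^*(c)\,\mathbf 1[\gcd(c,q_i)=1]$; writing $\mathbf 1[\gcd(c,q_i)=1]=\sum_{a\mid\gcd(c,q_i)}\mu(a)$ and using complete multiplicativity of $\chi_i^*$ gives
\begin{equation*}
\chi_i(c)=\sum_{\substack{a\mid q_i\\ a\mid c}}\mu(a)\,\chi_i^*(a)\,\chi_i^*(c/a),
\end{equation*}
valid for every integer $c$, including $c=0$ under the stated convention on the principal character modulo $1$. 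Substituting this for $\chi_1(c)$ and $\chi_2(d)$ in \eqref{eq:Gchi1chi2Def}, interchanging the finite divisor sums with the sum over $c,d$, and then setting $c=am$, $d=bn$ (so that $c=d=0$ becomes $m=n=0$), one obtains
\begin{equation*}
G_{\chi_1,\chi_2}(z,s)=\frac12\sum_{a\mid q_1}\sum_{b\mid q_2}\mu(a)\chi_1^*(a)\mu(b)\chi_2^*(b)\,(q_2 y)^s\sumprime_{m,n}\frac{\chi_1^*(m)\chi_2^*(n)}{|amq_2 z+bn|^{2s}}\Big(\frac{|amq_2 z+bn|}{amq_2 z+bn}\Big)^k.
\end{equation*}

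The key step is to recognize the inner sum. Comparing with \eqref{eq:Gchi1chi2Def} for $\chi_1^*,\chi_2^*$ and choosing the argument $w=\frac{aq_2}{bq_2^*}z$, one has $q_2^* w=\frac{aq_2}{b}z$, hence $m q_2^* w+n=\frac1b(amq_2 z+bn)$ and $q_2^*\,\mathrm{Im}(w)=\frac{aq_2}{b}y$; extracting the powers of $b$ (positive, so the weight factor is unaffected) yields
\begin{equation*}
\sumprime_{m,n}\frac{\chi_1^*(m)\chi_2^*(n)}{|amq_2 z+bn|^{2s}}\Big(\frac{|amq_2 z+bn|}{amq_2 z+bn}\Big)^k=\frac{2}{(ab)^s(q_2 y)^s}\,G_{\chi_1^*,\chi_2^*}\Big(\frac{aq_2}{bq_2^*}z,\,s\Big).
\end{equation*}
Substituting this back collapses the $\tfrac12$ and the factor $(q_2 y)^s$, giving $G_{\chi_1,\chi_2}(z,s)=\sum_{a\mid q_1}\sum_{b\mid q_2}\frac{\mu(a)\chi_1^*(a)\mu(b)\chi_2^*(b)}{(ab)^s}\,G_{\chi_1^*,\chi_2^*}\big(\frac{aq_2}{bq_2^*}z,s\big)$. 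Finally, replacing $G_{\chi_1,\chi_2}=L(2s,\chi_1\chi_2)E_{\chi_1,\chi_2}$ on the left, $G_{\chi_1^*,\chi_2^*}=L(2s,\chi_1^*\chi_2^*)E_{\chi_1^*,\chi_2^*}$ on the right, and dividing by $L(2s,\chi_1\chi_2)$ gives the claimed formula.

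The \textbf{only real work} is the bookkeeping in the middle step: tracking the powers of $a$, $b$, $q_2$, $q_2^*$, and $y$ through the substitution $z\mapsto\frac{aq_2}{bq_2^*}z$ and checking that everything except $(ab)^{-s}$ cancels. A minor point — the terms with $c=0$ or $d=0$ in $G$ — requires no separate treatment, since such terms are present only when $q_1=1$ or $q_2=1$, and in those cases the relevant divisor sum degenerates to a single term and the identity is immediate.
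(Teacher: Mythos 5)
Your proposal is correct and follows essentially the same route as the paper: both pass to $G_{\chi_1,\chi_2}=L(2s,\chi_1\chi_2)E_{\chi_1,\chi_2}$, detect the coprimality conditions $(c,q_1)=(d,q_2)=1$ by M\"obius inversion, substitute $c=am$, $d=bn$, and identify the inner sum as $(ab)^{-s}G_{\chi_1^*,\chi_2^*}\bigl(\tfrac{aq_2}{bq_2^*}z,s\bigr)$. The bookkeeping in your key step checks out, so no further changes are needed.
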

Remarks.  Within the sum, we may restrict to $(b, q_2^*) =1$, which implies $b q_2^* | q_2$, and so $\frac{a q_2 }{b q_2^* }$ is an integer.  Similarly, we may assume $(a, q_1^*) = 1$ which gives that $\frac{a q_2 }{b q_2^* }$ is a divisor of $\frac{q_1 q_2}{q_1^* q_2^*}$.

\begin{proof}
The desired formula is equivalent to
\begin{equation*}
G_{\chi_1, \chi_2}(z,s) = 
\sum_{a|q_1} \sum_{b|q_2} \frac{\mu(a) \chi_1^*(a) \mu(b) \chi_2^*(b)}{(ab)^s}
G_{\chi_1^*, \chi_2^*}\Big(\frac{aq_2  }{bq_2^*  } z, s\Big).
\end{equation*}
By definition,
\begin{equation*}
G_{\chi_1, \chi_2}(z,s) = 
\frac12 \sumprime_{\substack{c,d \in \mathbb{Z} \\ (c, q_1) = 1 \\ (d, q_2) = 1 }} \frac{(q_2 y)^s \chi_1^*(c) \chi_2^*(d)}{|cq_2z + d|^{2s}} \Big(\frac{|cq_2 z + d|}{cq_2 z + d}\Big)^{k} .
\end{equation*}
By M\"obius inversion, we deduce
\begin{equation*}
G_{\chi_1, \chi_2}(z,s) = 
\sum_{a|q_1} \sum_{b|q_2} \mu(a) \mu(b) \chi_1^*(a) \chi_2^*(b)
\frac12 \sumprime_{\substack{c,d \in \mathbb{Z} }} \frac{(q_2 y)^s \chi_1^*(c) \chi_2^*(d)}{|a c  q_2 z + bd|^{2s}} \Big(\frac{|acq_2 z + bd|}{acq_2 z + bd}\Big)^{k}.
\end{equation*}
For the inner sum above, we have
\begin{equation*}
\frac12 \sumprime_{\substack{c,d \in \mathbb{Z} }} \frac{(q_2 y)^s \chi_1^*(c) \chi_2^*(d)}{|a c  q_2 z + bd|^{2s}} 
\Big(\frac{|acq_2 z + bd|}{acq_2 z + bd}\Big)^{k}
=
\frac{1}{(ab)^{s}}  \frac12 \sumprime_{\substack{c,d \in \mathbb{Z} }} \frac{( q_2^*\frac{aq_2}{b q_2^*} y)^s \chi_1^*(c) \chi_2^*(d)}{| c q_2^* \frac{aq_2}{b q_2^*}  z + d|^{2s}}
\Big(\frac{| c q_2^* \frac{aq_2}{b q_2^*}  z + d|}{ c q_2^* \frac{aq_2}{b q_2^*}  z + d}\Big)^k,
\end{equation*}
which is none other than $(ab)^{-s} G_{\chi_1^*, \chi_2^*}(\frac{a q_2}{b q_2^*} z, s)$.  
\end{proof}

\subsection{Notation and results from \cite{KY}}
\label{section:KY}
As shown in \cite[Proposition 3.1]{KY} (see also \cite[Section 3.8]{DiamondShurman}), a complete set of inequivalent cusps for $\Gamma_0(N)$ is given by $\frac{1}{w} = \frac{1}{uf}$ where $f | N$ and $u$ runs modulo $(f, N/f)$, coprime to the modulus, after choosing a representative coprime to $N$ (such a choice can always be achieved).  %We may always pick a representative $u$ that is coprime to $N$ after adding a suitable multiple of $(f,N/f)$.  
With this choice of representative, then $\frac{1}{uf} \sim \frac{u}{f}$ (equivalent in $\Gamma_0(N)$), and so by \eqref{eq:EaEquivalentCuspsRelation}, $E_{u/f} =  
\overline{\psi}(\gamma)
E_{1/w}$, where $\gamma(\frac{1}{uf}) = \frac{u}{f}$.  One may easily check that any $\gamma$ satisfying this equation has lower-right entry $d$ congruent to $\overline{u}$ modulo both $f$ and $N'=N/f$.  Thus $d \equiv \overline{u} \pmod{[f,N']}$.  
In Lemma \ref{lemma:singularcusps} below, we show that $u/f$ is singular for $\psi$ iff $\psi$ has period dividing $[f,N']$, and therefore $\overline{\psi}(\gamma) = \psi(u)$.  That is,
\begin{equation}
\label{eq:Euoverfversus1overw}
 E_{u/f}(z,s,\psi) = \psi(u) E_{1/w}(z,s,\psi).
\end{equation}
We will generally work with cusps of the form $\frac{1}{uf}$, but  one may convert to $\frac{u}{f}$ using \eqref{eq:Euoverfversus1overw}.

%It is a bit cleaner to use the parameterization by $\frac{1}{uf}$ instead of $\frac{u}{f}$, since one does not need to possibly alter the choice of representative, so we shall generally make the former choice.

\begin{myprop}[\cite{KY}, Proposition 3.3]\label{prop:stabilizerScaling}
            Let $\c= 1/w$ be a cusp of $\Gamma = \Gamma_0(N)$, and set
            \begin{equation}
            \label{eq:Nandwformulas}
              N = (N,w) N'   \qquad w = (N,w) w', %= (N',w) w'', 
              \qquad
              N' = (N',w) N''.  %\qquad w = (N',w) w''.
            \end{equation}
            The stabilizer of $1/w$ is given as 
            \begin{equation}
            \label{eq:stabilizerFormula}
                \Gamma_{1/w} = \left\{ \pm \tau_{1/w}^t  : t \in \Z \right\}, \quad \text{where} \quad \tau_{1/w}^t = \bpm 1 - wN'' t  & N'' t \\ -w^2 N'' t & 1+ wN'' t \epm,
            \end{equation}
            and one may choose the scaling matrix as
           \begin{equation}\label{eq:anyCuspScaling}
               \sigma_{1/w} = \bpm 1&0\\ w&1 \epm  \bpm \sqrt{N''}  &0\\ 0 &1/\sqrt{N''} \epm .
           \end{equation}
        \end{myprop}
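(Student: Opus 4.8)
The plan is to transport everything to the cusp $\infty$ using the matrix $g = \bpm 1 & 0 \\ w & 1 \epm$, which satisfies $g\infty = 1/w$. First I would note that $\gamma \in SL_2(\mz)$ fixes $1/w$ if and only if $h := g\inv \gamma g$ fixes $\infty$; since $h \in SL_2(\mz)$ and has lower-left entry $0$, its diagonal entries must be $\pm 1$, so $h = \pm \bpm 1 & x \\ 0 & 1 \epm$ for some $x \in \mz$. Conjugating back, the stabilizer of $1/w$ inside $SL_2(\mz)$ is exactly $\{\pm \tau^x : x \in \mz\}$, where
\[
\tau^x := g \bpm 1 & x \\ 0 & 1 \epm g\inv = \bpm 1 - wx & x \\ -w^2 x & 1 + wx \epm .
\]

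Next I would intersect this with $\Gamma_0(N)$: the off-diagonal entries of $\tau^x$ are $x$ and $-w^2 x$, so $\tau^x \in \Gamma_0(N)$ precisely when $N \mid w^2 x$. The one step that needs genuine care is the elementary claim that, for $x \in \mz$, one has $N \mid w^2 x$ if and only if $N'' \mid x$. I would verify this prime-by-prime: unwinding the chain of gcd relations in \eqref{eq:Nandwformulas} shows that $\mathrm{ord}_p(N'') = \max(0, \mathrm{ord}_p(N) - 2\,\mathrm{ord}_p(w))$ for every prime $p$ --- equivalently $N'' = N/(N, w^2)$, the width of the cusp $1/w$ --- and then $N \mid w^2 x \iff \mathrm{ord}_p(x) \ge \mathrm{ord}_p(N) - 2\,\mathrm{ord}_p(w)$ for all $p \iff N'' \mid x$. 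Setting $x = N'' t$ then produces exactly the matrices displayed in \eqref{eq:stabilizerFormula}, which gives the stated description of $\Gamma_{1/w}$.

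For the scaling matrix I would take $\sigma_{1/w} = g \, \bpm \sqrt{N''} & 0 \\ 0 & 1/\sqrt{N''} \epm$, which is \eqref{eq:anyCuspScaling}. Since the diagonal factor fixes $\infty$, we get $\sigma_{1/w}\infty = g\infty = 1/w$. Writing $\tau_{1/w}^t = g \bpm 1 & N'' t \\ 0 & 1 \epm g\inv$ and cancelling the outer copies of $g$, the conjugate is
\[
\sigma_{1/w}\inv \, \tau_{1/w}^t \, \sigma_{1/w}
= \bpm 1/\sqrt{N''} & 0 \\ 0 & \sqrt{N''} \epm \bpm 1 & N'' t \\ 0 & 1 \epm \bpm \sqrt{N''} & 0 \\ 0 & 1/\sqrt{N''} \epm
= \bpm 1 & t \\ 0 & 1 \epm ,
\]
so $\sigma_{1/w}\inv \Gamma_{1/w} \sigma_{1/w} = \Gamma_\infty$, confirming that $\sigma_{1/w}$ is a valid scaling matrix.

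I do not expect a serious obstacle here: once the problem is conjugated to $\infty$, everything is an explicit matrix computation. The only delicate point is the valuation bookkeeping that identifies $N''$ (as defined by the telescoping gcds in \eqref{eq:Nandwformulas}) with the cusp width $N/(N, w^2)$; it is exactly this identity that makes the divisibility condition $N \mid w^2 x$ collapse to the clean condition $N'' \mid x$.
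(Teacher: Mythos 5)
Your argument is correct and complete: conjugating by $g=\bigl(\begin{smallmatrix}1&0\\ w&1\end{smallmatrix}\bigr)$, identifying the full stabilizer in $SL_2(\Z)$, and reducing $N\mid w^2x$ to $N''\mid x$ via the valuation check $N''=N/(N,w^2)$ is exactly the standard route, and the paper itself simply imports this statement from \cite{KY} (where the proof runs along the same conjugation computation, stated there for general cusps $u/w$). No gaps; the only point needing care, the identification of $N''$ from the telescoping gcds in \eqref{eq:Nandwformulas} with $N/(N,w^2)$, is handled correctly prime by prime.
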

 Remark.  With this choice of scaling matrix, we have
 \begin{equation}
 \label{eq:scalingMatrixGeneratorComparedtoGammaInfinityGenerator}
  \sigma_{1/w}^{-1} \tau_{1/w} \sigma_{1/w} = \bpm 1 & 1 \\ & 1 \epm,
 \end{equation}
which is important in the context of checking if $1/w$ is singular for a Dirichlet character $\psi$ (recall the discussion in Section \ref{section:EisensteinCusps}).  One should also observe that $N | w^2 N''$ to see that $\tau_{1/w} \in \Gamma_0(N)$.

 We next quote a double coset calculation from \cite{KY}, in the special case $\a = \infty$, in which case the notation from \cite{KY} specializes with $r=N$, $s=1$:
\begin{mylemma}[\cite{KY}, Lemma 3.5]
	\label{lemma:DoubleCosetFormula}
		Let $\c = 1/w$ be any cusp of $\Gamma=\Gamma_0(N)$ and $\a = 1/N \sim \infty$. 
		Let the scaling matrix $\sigma_{1/w}$ be as in 
		\eqref{eq:anyCuspScaling}, and take $\sigma_{\infty} = I$. Then 
		\begin{equation}
		\label{eq:doublecoset}
			\sigma_{1/w} \inv \Gamma \sigma_\infty
			= \left\{\bpm \frac{A}{\sqrt{N''}} &\frac{B}{\sqrt{N''}} \\ C \sqrt{N''} &D \sqrt{N''} \epm: 
			\bpm A & B \\ C & D \epm \in \SL_2(\Z),\,\,
			C \equiv - wA \mymod{N} 
			\right\}.
		\end{equation}
	\end{mylemma}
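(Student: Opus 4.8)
The plan is to prove this identity of cosets by a direct matrix computation, taking as input the explicit scaling matrix furnished by Proposition~\ref{prop:stabilizerScaling}. Since $\sigma_\infty = I$ and $1/N \sim \infty$ in $\Gamma_0(N)$, the set on the left is simply $\sigma_{1/w}^{-1}\Gamma$, so everything reduces to computing $\sigma_{1/w}^{-1}\gamma$ for a general $\gamma = \begin{pmatrix} a & b \\ c & d \end{pmatrix} \in \Gamma_0(N)$ and reading off the shape of the result.

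First I would invert \eqref{eq:anyCuspScaling}, obtaining
\[
\sigma_{1/w}^{-1} = \begin{pmatrix} 1/\sqrt{N''} & 0 \\ 0 & \sqrt{N''} \end{pmatrix} \begin{pmatrix} 1 & 0 \\ -w & 1 \end{pmatrix} = \begin{pmatrix} 1/\sqrt{N''} & 0 \\ -w\sqrt{N''} & \sqrt{N''} \end{pmatrix} ,
\]
and then multiplying on the right by $\gamma$ gives
\[
\sigma_{1/w}^{-1} \gamma = \begin{pmatrix} a/\sqrt{N''} & b/\sqrt{N''} \\ (c - wa)\sqrt{N''} & (d - wb)\sqrt{N''} \end{pmatrix} .
\]
Setting $A = a$, $B = b$, $C = c - wa$, and $D = d - wb$, the matrix $\sigma_{1/w}^{-1}\gamma$ has exactly the form appearing on the right-hand side of \eqref{eq:doublecoset}. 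Moreover $AD - BC = ad - bc = 1$, so $\begin{pmatrix} A & B \\ C & D \end{pmatrix} \in \SL_2(\Z)$, and $N | c$ forces $C = c - wa \equiv -wA \pmod{N}$. This establishes the inclusion of $\sigma_{1/w}^{-1}\Gamma$ in the set on the right.

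For the reverse inclusion, given $\begin{pmatrix} A & B \\ C & D \end{pmatrix} \in \SL_2(\Z)$ with $C \equiv -wA \pmod{N}$, I would set $\gamma = \begin{pmatrix} A & B \\ C + wA & D + wB \end{pmatrix}$. Its determinant is $AD - BC = 1$, and its lower-left entry $C + wA$ is divisible by $N$ by hypothesis, so $\gamma \in \Gamma_0(N)$; substituting this $\gamma$ into the displayed formula for $\sigma_{1/w}^{-1}\gamma$ reproduces the given matrix. In fact $\gamma \leftrightarrow (A,B,C,D)$ is a bijection between $\Gamma_0(N)$ and the set of quadruples $(A,B,C,D)$ with $AD - BC = 1$ and $C \equiv -wA \pmod{N}$, which makes both inclusions transparent.

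I do not anticipate any real obstacle: the whole content of the lemma is packaged into the explicit scaling matrix of Proposition~\ref{prop:stabilizerScaling}, and what remains is the bookkeeping of the $\sqrt{N''}$ factors together with the elementary observation that, under $c = C + wA$, the defining congruence $N | c$ of $\Gamma_0(N)$ becomes precisely $C \equiv -wA \pmod{N}$. In the general double-coset statement of \cite{KY}, where $\a$ is an arbitrary cusp in place of $\infty$, the analogous computation is more delicate; the present case is its specialization to $\a = \infty$, which collapses that extra difficulty.
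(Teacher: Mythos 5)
Your proof is correct and complete: the paper itself imports this lemma from \cite{KY} without reproducing an argument, and your direct computation of $\sigma_{1/w}^{-1}\gamma$ from the explicit scaling matrix \eqref{eq:anyCuspScaling}, together with the explicit inverse bijection $(A,B,C,D)\mapsto(A,B,C+wA,D+wB)$, is exactly the natural verification. Nothing is missing; the special case $\a=\infty$ indeed reduces the general double-coset statement of \cite{KY} to this elementary bookkeeping.
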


It is convenient to translate the notation a bit.  With $w =uf$ as above, we have 
\begin{equation*}
 N' = \frac{N}{f}, \qquad N'' = \frac{N'}{(f,N')}, \qquad w'=u.
\end{equation*}

\subsection{Singular cusps} 
 \begin{mylemma}
 \label{lemma:singularcusps}
 Let $\psi$ be a Dirichlet character modulo $N$, and let $\a = \frac{1}{uf}$ with $f|N$, and $(u,N) = 1$.  Then $\a$ is singular for $\psi$ if and only if $\psi$ is periodic modulo $\frac{N}{(f,N/f)}$, equivalently, the primitive character inducing $\psi$ has modulus dividing $\frac{N}{(f,N/f)}$.
\end{mylemma}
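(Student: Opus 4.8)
\subsection*{Proof plan}

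The first step is to unwind the definition of ``singular.'' By the discussion in Section~\ref{section:EisensteinCusps} together with \eqref{eq:scalingMatrixGeneratorComparedtoGammaInfinityGenerator}, the matrix $\tau_{1/w}$ from Proposition~\ref{prop:stabilizerScaling} is exactly the generator $\tau_{\a}$ of $\Gamma_{\a}$ for $\a = 1/w$, $w = uf$, so that $\a$ is singular for $\psi$ precisely when $\psi(\tau_{1/w}) = 1$, i.e.\ when $\psi$ of the lower-right entry of $\tau_{1/w}$ equals $1$. So I would first compute that entry: feeding $w = uf$ with $(u,N) = 1$ and $f \mid N$ into \eqref{eq:Nandwformulas} gives $(N,w) = f$, $N' = N/f$, $(N',w) = (f, N/f) =: \ell$, and $N'' = (N/f)/\ell$, whence $wN'' = u \cdot \tfrac{N}{\ell} = uM$ with $M := \tfrac{N}{(f,N/f)}$. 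Thus the lower-right entry of $\tau_{1/w}$ is $1 + uM$, and \emph{$\a$ is singular for $\psi$ if and only if $\psi(1+uM) = 1$}.

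The core of the argument is then the purely arithmetic claim that $1+uM$ \emph{generates} the subgroup $H := \ker\big((\mz/N\mz)^{\times} \twoheadrightarrow (\mz/M\mz)^{\times}\big)$ (the reduction map is onto since $M \mid N$). Granting this, $\psi(1+uM) = 1$ is equivalent to $\psi|_H = 1$, which is equivalent to $\psi$ factoring through $(\mz/M\mz)^{\times}$, i.e.\ to $\psi$ being periodic modulo $M$, i.e.\ (since $M \mid N$) to the primitive character inducing $\psi$ having modulus dividing $M$ --- which is the assertion of the lemma. The key observation making the generation claim work is that $\ell = (f, N/f)$ divides both $f$ and $N/f$, so $\ell^2 \mid N$; consequently $M^2 = N^2/\ell^2$ is divisible by $N$, and moreover $v_p(\ell) \le \tfrac12 v_p(N) < v_p(N)$ for every prime $p \mid N$, so every such $p$ divides $M$; in particular $(1 + uM, N) = 1$, so $1 + uM$ does lie in $(\mz/N\mz)^{\times}$, hence in $H$. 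The same prime-by-prime count yields $\phi(N)/\phi(M) = \prod_{p \mid N} p^{v_p(\ell)} = \ell$, so $|H| = \ell$.

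Finally, since $M^2 \equiv 0 \pmod N$, the binomial theorem gives $(1+uM)^t \equiv 1 + tuM \pmod N$ for every $t \ge 1$, so $(1+uM)^t \equiv 1 \pmod N$ iff $N \mid tuM$ iff $\ell \mid t$ (using $(u,\ell) = 1$); hence $1 + uM$ has order exactly $\ell$ in $(\mz/N\mz)^{\times}$. As $1 + uM \in H$ and $|H| = \ell$, we conclude $\langle 1 + uM \rangle = H$, and feeding this into the previous paragraph finishes the proof. The only genuinely non-routine point is spotting that $\ell^2 \mid N$ (equivalently $N \mid M^2$); once that is noted, the order computation and the identification $\langle 1+uM\rangle = H$ are immediate, and the rest is bookkeeping about induced characters.
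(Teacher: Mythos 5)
Your proposal is correct. The first half is the same as the paper's: you unwind the definition of singularity via Proposition \ref{prop:stabilizerScaling} and \eqref{eq:scalingMatrixGeneratorComparedtoGammaInfinityGenerator}, compute $wN'' = uM$ with $M = N/(f,N/f)$, and reduce the lemma to the statement that $\psi(1+uM)=1$ is equivalent to $\psi$ having conductor dividing $M$. (The paper phrases its condition as $\psi(1+Mt)=1$ for all $t$, which is the same thing since $\psi$ restricted to $\Gamma_0(N)$ is a homomorphism and $(u,\ell)=1$.) Where you diverge is in how you establish that equivalence: the paper simply cites, without proof, the general ``elementary exercise'' that a character modulo $N$ is induced by a character of modulus $d\mid N$ if and only if it is trivial on the set $\{1+dk : (1+dk,N)=1\}$, i.e.\ on the kernel of $(\mz/N\mz)^{\times}\to(\mz/d\mz)^{\times}$. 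You instead give a self-contained argument that the \emph{single} element $1+uM$ generates that kernel $H$, exploiting the special structure of this situation: $\ell^2\mid f\cdot(N/f)=N$, hence $N\mid M^2$, hence $(1+uM)^t\equiv 1+tuM\pmod N$, which pins the order of $1+uM$ down to exactly $\ell=|H|$. This buys you a complete proof where the paper leaves an exercise, at the cost of using a fact ($N\mid M^2$, and that $M$ and $N$ share all prime factors --- the latter is noted in the paper's remark after the lemma) that the general criterion does not need. All the arithmetic checks out: the surjectivity of the reduction map, $\phi(N)/\phi(M)=\ell$, the coprimality $(1+uM,N)=1$, and the final translation between ``trivial on $H$,'' ``periodic modulo $M$,'' and ``conductor dividing $M$'' are all handled correctly.
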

Remarks.  In case $\psi$ is primitive modulo $N$, then the  singular cusps for $\psi$ are the Atkin-Lehner cusps $1/f$ with $(f,N/f) = 1$.
Also, observe $\frac{N}{(f,N/f)} = [f,N/f]$, and so in particular, $N$ and $\frac{N}{(f,N/f)}$ share the same prime factors.

\begin{proof}
  By Proposition \ref{prop:stabilizerScaling} and \eqref{eq:scalingMatrixGeneratorComparedtoGammaInfinityGenerator},
the cusp $\frac{1}{w} = \frac{1}{uf}$ is singular for $\psi$ iff $\psi(1+wN'' t) = 1$ for all $t \in \mathbb{Z}$.  Since $w N'' = \frac{N}{(f,N/f)} u$, the condition that $1/w$ is singular for $\psi$ is seen to be equivalent to $\psi(1+ \frac{N}{(f,N/f)} t) = 1$ for all $t \in \mathbb{Z}$.  %One may note the important fact that all the prime divisors of $N$ also divide $\frac{N}{(f,N/f)}$.  Hence a number is coprime to $N$ iff it is coprime to $\frac{N}{(f,N/f)}$, and so $(1+\frac{N}{(f,N/f)} r, N) = 1$ for all $r \in \mz$.  

Let $\chi$ be a Dirichlet character modulo $N$, and suppose $d|N$.  It is an elementary exercise to show that $\chi$ is induced by a character of modulus $d$ if and only if $\chi(1+dk) = 1$ for all $k \in \mz$ such that $(1+dk,N) = 1$.  
This exercise completes the proof.
%One direction is trivial.  If $a \in \mz$,  $(a,N) = 1$, and $k \in \mz$, then there exists $v \in \mz$ so that $v \frac{N}{d} \equiv k \pmod{a}$.  Using this, one may show $\chi(a + dk) = \chi(a + d a \ell)$ for some $\ell \in \mz$, and so $\chi(a+dk) = \chi(a) \chi(1 + d \ell) = \chi(a)$, as claimed. 
 %If $\psi$ is periodic mod $\frac{N}{(f,N/f)}$ then obviously $\psi(1 + \frac{N}{(f,N/f)} r) = \psi(1) = 1$, so that the cusp $1/w$ is singular for $\psi$.  
 %
 %  Next we turn to the reverse implication.
%   We need to show that if $\psi(1 + \frac{N}{(f,N/f)} r) = 1$ for all $r \pmod{(f,N/f)}$, then $\psi(a + \frac{N}{(f,N/f)} r) = \psi(a)$ for all $r \in \mathbb{Z}$ and $(a,N) = 1$.  Given $(a,N) = 1$ and $r \in \mathbb{Z}$, there exists an integer $x$ so that $x(f,N/f) \equiv - r \pmod{a}$.  That is, there exist integers $x,y$ so that $x(f,N/f) + r = ay$.  Then
%   \begin{equation}
%    \psi\Big(a + \frac{N}{(f,N/f)} r\Big) = \psi\Big(a + \frac{N}{(f,N/f)} (r + x(f,N/f))\Big) = \psi\Big(a + a y \frac{N}{(f,N/f)}\Big) 
%    = \psi(a),
%   \end{equation}
% as desired.
\end{proof}

\begin{mycoro}
Suppose $\psi$ is a Dirichlet character modulo $N$, induced by a primitive character $\psi^*$ of conductor $N^* |N$.  Then the number of singular cusps for $\psi$ equals
\begin{equation*}
 \sum_{\substack{f|N \\ (f,N/f) | \frac{N}{N^*}}} \varphi((f,N/f)).
\end{equation*}
\end{mycoro}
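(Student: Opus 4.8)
The plan is to count singular cusps by combining the parametrization of cusps of $\Gamma_0(N)$ recalled in Section \ref{section:KY} with the characterization of singularity from Lemma \ref{lemma:singularcusps}. Recall that a complete set of inequivalent cusps for $\Gamma_0(N)$ is given by $\frac{1}{uf}$ where $f|N$ and $u$ runs over $(\mz/(f,N/f)\mz)^\times$. So the total count is $\sum_{f|N} \varphi((f,N/f))$, and the singular count is obtained by restricting the outer sum to those $f$ for which the cusps $\frac{1}{uf}$ are singular.

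First I would invoke Lemma \ref{lemma:singularcusps}: the cusp $\frac{1}{uf}$ is singular for $\psi$ if and only if $\psi$ is periodic modulo $\frac{N}{(f,N/f)}$, equivalently the conductor $N^*$ of $\psi$ divides $\frac{N}{(f,N/f)}$. The key observation is that this condition depends only on $f$ (not on $u$), so for each admissible $f$ all $\varphi((f,N/f))$ cusps lying over it are simultaneously singular. Next I would rewrite the divisibility condition $N^* \mid \frac{N}{(f,N/f)}$ in the form appearing in the statement: since $N^* \mid N$ and $(f,N/f) \mid N$, one has $N^* \mid \frac{N}{(f,N/f)}$ if and only if $(f,N/f) \mid \frac{N}{N^*}$. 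This is the elementary manipulation $N^*(f,N/f) \mid N \iff (f,N/f) \mid N/N^*$, valid because $N^* \mid N$.

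Assembling these, the number of singular cusps for $\psi$ is
\begin{equation*}
 \sum_{\substack{f \mid N \\ N^* \mid \frac{N}{(f,N/f)}}} \varphi((f,N/f)) = \sum_{\substack{f \mid N \\ (f,N/f) \mid \frac{N}{N^*}}} \varphi((f,N/f)),
\end{equation*}
which is the claimed formula.

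I do not expect any serious obstacle here: the corollary is essentially bookkeeping once Lemma \ref{lemma:singularcusps} and the cusp parametrization are in hand. The only point requiring a moment's care is verifying that the singularity condition is genuinely independent of the parameter $u$ — but this is immediate from the statement of Lemma \ref{lemma:singularcusps}, since the modulus $\frac{N}{(f,N/f)}$ does not involve $u$ — and the equivalence of the two forms of the divisibility condition, which is the elementary fact recorded above. If one wished, one could also remark (as the paper does after Lemma \ref{lemma:singularcusps}) that $\frac{N}{(f,N/f)} = [f,N/f]$, giving yet another equivalent description of the admissible $f$.
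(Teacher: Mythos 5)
Your proof is correct and is exactly the intended argument: the corollary follows immediately from the cusp parametrization $\frac{1}{uf}$ (with $u$ running over $(\mz/(f,N/f)\mz)^\times$), the $u$-independence of the singularity criterion in Lemma \ref{lemma:singularcusps}, and the elementary equivalence $N^*\mid \frac{N}{(f,N/f)} \iff (f,N/f)\mid \frac{N}{N^*}$. The paper leaves this as an unproved corollary precisely because the bookkeeping is as routine as you describe.
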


\section{Decomposition of $E_{\a}$}
Our first main result decomposes $E_{\frac{1}{uf}}$ in terms of $E_{\chi_1,\chi_2}$'s.  
 \begin{mytheo}
 \label{thm:EcuspInTermsofEchichi}
  Let notation be as in Section \ref{section:KY}.  Then 
\begin{multline}
\label{eq:EcuspInTermsofEchichi}
 E_{\frac{1}{uf}}(z,s,\psi) = \frac{1}{(fN'')^s}
 \frac{1}{\varphi((f,N/f))} 
 \sum_{q_1 | \frac{N}{f}} \sum_{q_2 | f} 
 \sumstar_{\substack{\chi_1 \shortmod{q_1} \\ \chi_2 \shortmod{q_2} \\ \chi_1 \overline{\chi_2} \sim \psi}} 
  \overline{\chi_1}(-u)
  \frac{L(2s,\chi_1 \chi_2)}{L(2s,\chi_1 \chi_2 \chi_{0,N})}
%\prod_{p|N} \Big(1- \frac{\chi_1(p) \chi_2(p)}{p^{2s}}\Big)^{-1}
 \\
\sum_{\substack{a|f \\ (a, q_2)=1}} \sum_{\substack{b|\frac{N}{f} \\ (b, q_1) = 1}} \frac{\mu(a) \mu(b) \chi_1(b)  \chi_2(a)}{(ab)^s}
E_{\chi_1, \chi_2}\Big(\frac{b f}{a q_2} z, s\Big),
\end{multline}
where the sum is over primitive characters $\chi_i$ modulo $q_i$, and $\chi_1 \overline{\chi_2} \sim \psi$ means that both sides are induced by the same primitive character.
 \end{mytheo}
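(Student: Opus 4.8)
\medskip

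The plan is to compute the Eisenstein series $E_{1/uf}(z,s,\psi)$ directly from its definition \eqref{eq:Eadef} using the explicit scaling matrix and double-coset description furnished by Proposition \ref{prop:stabilizerScaling} and Lemma \ref{lemma:DoubleCosetFormula}, and then to recognize the resulting lattice sum as a combination of $G_{\chi_1,\chi_2}$'s (hence $E_{\chi_1,\chi_2}$'s via \eqref{eq:Gchi1chi2Def}) after opening up the congruence condition with Dirichlet characters. First I would take $\sigma_\infty = I$ and $\sigma_{1/w}$ as in \eqref{eq:anyCuspScaling}, so that by Lemma \ref{lemma:DoubleCosetFormula} the matrices $\sigma_{1/w}^{-1}\gamma$ (as $\gamma$ runs over $\Gamma_{1/w}\backslash\Gamma$, which via $\Gamma_\infty$-translation on the left corresponds to $\Gamma_\infty\backslash\{\gamma\}$ from the right side) have the shape $\left(\begin{smallmatrix} A/\sqrt{N''} & B/\sqrt{N''} \\ C\sqrt{N''} & D\sqrt{N''}\end{smallmatrix}\right)$ with $\left(\begin{smallmatrix} A&B\\ C&D\end{smallmatrix}\right)\in SL_2(\mz)$, $C\equiv -wA \pmod N$. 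Plugging into \eqref{eq:Eadef}: the weight factor becomes $j(\sigma_{1/w}^{-1}\gamma,z)^{-k} = \left(\frac{|C\sqrt{N''}z + D\sqrt{N''}|}{C\sqrt{N''}z + D\sqrt{N''}}\right)^{-k}$, and $\mathrm{Im}(\sigma_{1/w}^{-1}\gamma z)^s = \left(\frac{y/N''}{|C\sqrt{N''}z+D\sqrt{N''}|^2}\right)^s = \frac{(y/N'')^s}{N''^s|Cz+D|^{2s}}$; meanwhile $\overline\psi(\gamma) = \overline\psi(D)$ since the lower-right entry of $\gamma$ equals that of $\sigma_{1/w}^{-1}\gamma$ up to the scaling (one checks the relevant entry is $D$ up to units, as in the $G_{\chi_1,\chi_2}$ automorphy discussion). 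Since $\Gamma_\infty$ on the right acts by $(A,B,C,D)\mapsto(A,B\pm\text{something},C,D\pm\text{something})$... more precisely the coset space is parametrized by the bottom row $(C,D)$ with $\gcd(C,D)=1$, $C\equiv -wA\pmod N$ for some completion $A$ — so the sum collapses to a sum over coprime pairs $(C,D)$ subject to a congruence determined by $w=uf$.

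\medskip

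The key arithmetic step is to untangle the congruence $C\equiv -wA\pmod N$. Writing $N = fN'$ with $w = uf$, the condition on the bottom row $(C,D)$ of an $SL_2(\mz)$ matrix becomes: $f\mid C$, say $C = f C_1$, and then $C_1 \equiv -u\overline{D}\pmod{N'}$ (using $AD\equiv 1\pmod{C}$-type relations and $(D,f)$, $(D,N')$ coprimality coming from $\gcd(C,D)=1$). So the sum over $(C,D)$ becomes a sum over $D$ coprime to $f$, and $C_1$ in a fixed residue class mod $N'$, coprime to... — and this residue-class restriction on $C_1$, together with the residue on $D$, is exactly what gets detected by orthogonality of Dirichlet characters: $\delta_{C_1\equiv -u\overline D\,(N')} = \frac{1}{\varphi(\cdot)}\sum_{\chi}\chi(C_1)\overline\chi(-u\overline D)$ and similarly for any leftover modulus. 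After collecting the characters one arranges them as a pair $(\chi_1,\chi_2)$ with $\chi_1$ modulo a divisor of $N'=N/f$ (attached to the "$C$-variable") and $\chi_2$ modulo a divisor of $f$ (attached to the "$D$-variable"), with the nebentypus constraint $\chi_1\overline{\chi_2}\sim\psi$ forced by requiring the resulting sum to actually have nebentypus $\psi$ — equivalently, by Lemma \ref{lemma:singularcusps}, the singularity hypothesis guarantees $\psi$ has period dividing $[f,N/f]$, which is what makes the character decomposition close up. The factor $\frac{1}{\varphi((f,N/f))}$ emerges because the "diagonal" overlap between the modulus $f$ and the modulus $N/f$ is counted with multiplicity $(f,N/f)$, and the $\overline{\chi_1}(-u)$ is the character value at $-u$ left over from the orthogonality relation. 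The resulting inner lattice sum over coprime $(C_1,D)$ with characters $\chi_1(C_1)\chi_2(D)$ inserted is, up to the scaling $z\mapsto \frac{N'}{q_1}\cdot\frac{f/q_2}{\cdots}z$ and the normalizing factors, precisely $G_{\chi_1,\chi_2}$ evaluated at a scaled point — but only after a Möbius inversion to pass from the imprimitive modulus $q_i'$ (a divisor of $N/f$ resp. $f$) down to the primitive conductor $q_i$; this is exactly Lemma \ref{lemma:EisensteinPrimitivevsNonPrimitive}, which produces the sums $\sum_{a|f,(a,q_2)=1}\sum_{b|N/f,(b,q_1)=1}\frac{\mu(a)\mu(b)\chi_1(b)\chi_2(a)}{(ab)^s}$ and the $L$-function ratio $\frac{L(2s,\chi_1\chi_2)}{L(2s,\chi_1\chi_2\chi_{0,N})}$.

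\medskip

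The prefactor $\frac{1}{(fN'')^s}$ is bookkeeping: it collects $N''^{-s}$ from the $\mathrm{Im}$-computation above and $f^{-s}$ (or the appropriate power) from rescaling the argument of $G_{\chi_1,\chi_2}$ to the normalized form $\frac{bf}{aq_2}z$ used in \eqref{eq:Echi1chi2def}; one must track carefully how $(q_2 y)^s$ in the definition of $E_{\chi_1,\chi_2}$ interacts with the substitution. The functional-equation / convergence issues are not a concern here since everything is done for $\mathrm{Re}(s)>1$ where all series converge absolutely, and the identity then propagates by meromorphic continuation.

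\medskip

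I expect the main obstacle to be the precise arithmetic of step two: correctly identifying which modulus the characters $\chi_1$ and $\chi_2$ range over (divisors of $N/f$ and $f$ respectively) and pinning down the exact residue classes so that the orthogonality relation produces precisely $\frac{1}{\varphi((f,N/f))}\sumstar_{\chi_1,\chi_2,\,\chi_1\overline{\chi_2}\sim\psi}\overline{\chi_1}(-u)$ and not some variant with the wrong modulus or a spurious factor. The subtlety is that the congruence $C\equiv -wA\pmod N$ mixes information modulo $f$ and modulo $N/f$, and these two moduli overlap in $\gcd(f,N/f)$; disentangling this overlap correctly — and seeing that the overlapping part is precisely where $\psi$'s periodicity modulo $[f,N/f]$ (guaranteed by singularity, Lemma \ref{lemma:singularcusps}) is used — is the delicate heart of the computation. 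Everything else (the Möbius inversion via Lemma \ref{lemma:EisensteinPrimitivevsNonPrimitive}, the rescaling, the constant $\frac{1}{(fN'')^s}$) is routine once the character decomposition is set up correctly.
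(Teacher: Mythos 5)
Your overall strategy is the one the paper follows: apply the double coset description from Lemma \ref{lemma:DoubleCosetFormula}, reduce to a lattice sum over lower rows $(C,D)$ with $C = fC'$, detect the residual congruence with Dirichlet characters, and pass from imprimitive to primitive characters via Lemma \ref{lemma:EisensteinPrimitivevsNonPrimitive}. However, there is a genuine gap at the heart of the computation: the evaluation of the nebentypus factor. You assert $\overline{\psi}(\gamma) = \overline{\psi}(D)$ ``up to units,'' but this is false in general. Writing $\gamma = \sigma_{1/w}\tau$ with $\tau$ as in \eqref{eq:doublecoset}, one finds $\gamma = \left(\begin{smallmatrix} A & B \\ C+Aw & D+Bw\end{smallmatrix}\right)$, so $\overline{\psi}(\gamma) = \overline{\psi}(D+Bw) = \psi(A)$, using $A(D+Bw)\equiv 1 \pmod N$, which follows from the congruence $C \equiv -wA \pmod N$. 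The value $\psi(A)$ is pinned down by two congruences, $A \equiv \overline{D} \pmod{C}$ and $A \equiv -\overline{u}C' \pmod{N'}$, whose moduli overlap in $(f,N')$; evaluating $\psi(A)$ therefore requires the coprime factorization $[f,N'] = f_0 N_0'$ and the corresponding splitting $\psi = \psi^{(f_0)}\psi^{(N_0')}$, yielding $\psi(A) = \psi^{(N_0')}(-\overline{u}C')\,\psi^{(f_0)}(\overline{D})$ (this is the content of Lemma \ref{lemma:EcuspIntermediateFormula}). This is precisely where the factor $\overline{\chi_1}(-u)$ and the assignment of $\chi_1$ to moduli dividing $N/f$ and $\chi_2$ to moduli dividing $f$ come from; with $\overline{\psi}(D)$ in its place the stated formula cannot be reached.

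Two further points need repair. First, the residual condition linking $C'$ and $D$ is a compatibility congruence modulo $(f,N')$, namely $D \equiv -u\overline{C'} \pmod{(f,N')}$, not a congruence modulo $N'$ as you write; indeed $\overline{D}$ need not exist modulo $N'$, since $D$ is only coprime to $fC'$. Getting this modulus right is what produces the sum over characters mod $(f,N')$ and the factor $\varphi((f,N/f))^{-1}$. Second, after detecting the congruence and applying Lemma \ref{lemma:EisensteinPrimitivevsNonPrimitive}, one is left with a sum over $\chi \pmod{(f,N')}$ of Eisenstein series attached to the primitivizations of $\chi\psi^{(N_0')}$ and $\chi\overline{\psi}^{(f_0)}$; converting this into the stated double sum over pairs of primitive characters $(\chi_1,\chi_2)$ with $\chi_1\overline{\chi_2}\sim\psi$ requires verifying that each admissible pair arises from exactly one $\chi$ of modulus dividing $(f,N')$, a combinatorial step (again resting on the $f_0$--$N_0'$ factorization) that your outline does not address.
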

Booker, Lee, and Str\"ombersson (personal communication) have independently proved Theorem \ref{thm:EcuspInTermsofEchichi} as well as the inversion formula in Theorem \ref{thm:EchichiInTermsofEa}.  They use these formulas, as well as the functional equation of $E_{\chi_1, \chi_2}$, to work out the scattering matrix for $\Gamma_0(N)$ with arbitrary nebentypus.  Previously, Huxley \cite{Huxley} considered the trivial nebentypus case.

Remarks.  Suppose that $\psi$ is primitive of conductor $N$, so that the cusp $\frac{1}{uf}$ is singular iff $(f,N/f) = 1$, and so we may take $u=1$.  Then \eqref{eq:EcuspInTermsofEchichi} simplifies as
\begin{equation}
 E_{1/f}(z,s,\psi) = \frac{\chi_1(-1)}{N^s} E_{\chi_1, \chi_2}(z,s),
\end{equation}
where $\chi_1$ is modulo $N/f$ and $\chi_2$ is modulo $f$, and $\chi_1 \overline{\chi_2} = \psi$.  This type of identity is implicit in \cite{DFI}, where the authors explicitly evaluated many properties of the Eisenstein series when the nebentypus is primitive.
In another special case where $N$ is square-free and $\psi$ is principal, 
then \eqref{eq:EcuspInTermsofEchichi} reduces to \cite[(3.25)]{ConreyIwaniec}.
% the formula (3.25) proved by 
% Conrey and Iwaniec .

%On occasion the following alternative expression is useful:
%\begin{equation}
% \frac{L(2s,\chi_1 \chi_2)}{L(2s,\chi_1 \chi_2 \chi_{0,N})} =  \prod_{p|N} (1-p^{-2s} \chi_1(p) \chi_2(p))^{-1}.
%\end{equation}
 %We can see that $E_{1/w}$ can be thought of as a linear combination of oldforms induced from newforms of levels dividing $(w,N')^2 = (f,N/f)^2$.
 
 Theorem \ref{thm:EcuspInTermsofEchichi} shows in an explicit form that the space of Eisenstein series is spanned by $E_{\chi_1,\chi_2}(Bz,s)$ with $q_1 q_2 B | N$, and $\chi_1 \overline{\chi_2} \sim \psi$, as expected from the discussion in Section \ref{section:newformdiscussion}.

The proof of Theorem \ref{thm:EcuspInTermsofEchichi} is long, so we break the proof into more managable pieces.  We begin with some notation.  From Lemma \ref{lemma:singularcusps}, the cusp $\frac{1}{uf}$ is singular iff $\psi$ is periodic modulo $[f,N']$.  There exist integers $f_0 | f$ and $N_0'|N'$ so that $[f,N'] = f_0 N_0'$ and $(f_0 ,N_0') = 1$.  The choices of $f_0$ and $N_0'$ may not be unique in case there is a prime power exactly dividing both $f$ and $N'$.  Then we may write $\psi = \psi^{(f_0)} \psi^{(N_0')}$ according to this factorization.
We remark that $f_0$ and $N_0'$ are useful within the proof of Theorem \ref{thm:EcuspInTermsofEchichi}, yet they are not present in the final formula \eqref{eq:EcuspInTermsofEchichi}.

With this notation in place, it is helpful for later to record that if $u \equiv u' \pmod{(f,N')}$ (both coprime to $N$), then
$E_{\frac{1}{uf}}(z,s,\psi) = \psi^{(N_0')}(u' \overline{u}) E_{\frac{1}{u'f}}(z,s,\psi)$, following from \eqref{eq:EaEquivalentCuspsRelation} and a calculation of the lower-right entry of a matrix $\gamma$ such that $\gamma \frac{1}{uf} = \frac{1}{u'f}$.  Put another way, $\psi^{(N_0')}(u) E_{\frac{1}{uf}}(z,s,\psi)$ is a well-defined function of $u \pmod{(f,N')}$.

\begin{mylemma}
\label{lemma:EcuspIntermediateFormula}
 With notation as above, we have
 \begin{equation}
 \label{eq:EufInsideLemma}
 E_{\frac{1}{uf}}(z,s, \psi) %= E_{u/f}(z,s, \psi) 
 = 
 \delta_{f=N} y^s + \frac{y^s}{(N'')^s}
 \sum_{\substack{(D, fC') = 1 \\ C' > 0, \thinspace (C',N') = 1 \\ D \equiv - \overline{C'} u \shortmod{(f,N')} }}
 \frac{ \psi^{(N_0')}(-\overline{u} C') \psi^{(f_0)}(\overline{D}) }{|C'f z + D|^{2s}} \Big(\frac{|C'fz + D|}{C'fz + D}\Big)^k.
\end{equation}
\end{mylemma}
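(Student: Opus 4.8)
The plan is to unfold the definition \eqref{eq:Eadef} of $E_{1/w}$ with $w=uf$, transport the sum through the scaling matrix $\sigma_{1/w}$ of Proposition \ref{prop:stabilizerScaling}, parametrize the resulting coset space using the double-coset description of Lemma \ref{lemma:DoubleCosetFormula}, and then carry out the congruence bookkeeping. The first move is to set $\delta=\sigma_{1/w}^{-1}\gamma$: since $\sigma_{1/w}^{-1}\Gamma_{1/w}\sigma_{1/w}=\Gamma_\infty$ by \eqref{eq:scalingMatrixGeneratorComparedtoGammaInfinityGenerator} (using $-I\in\Gamma$), the assignment $\gamma\mapsto\sigma_{1/w}^{-1}\gamma$ induces a bijection $\Gamma_{1/w}\backslash\Gamma\to\Gamma_\infty\backslash\sigma_{1/w}^{-1}\Gamma$, so that $E_{1/w}(z,s,\psi)=\sum_{\delta}\overline\psi(\sigma_{1/w}\delta)\,j(\delta,z)^{-k}(\operatorname{Im}\delta z)^{s}$, the sum running over this coset space. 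That the summand is a well-defined function of a coset is exactly the point where the singularity hypothesis $\psi(\tau_{1/w})=1$ is used.

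Next I would parametrize. By Lemma \ref{lemma:DoubleCosetFormula} with $\sigma_\infty=I$, every $\delta\in\sigma_{1/w}^{-1}\Gamma$ is $\bpm A/\sqrt{N''} & B/\sqrt{N''} \\ C\sqrt{N''} & D\sqrt{N''} \epm$ for a unique $\bpm A & B \\ C & D \epm\in\SL_2(\mz)$ with $C\equiv -wA\mymod N$; as $f\mid w$ and $f\mid N$ this forces $C=fC'$ with $C'\equiv -uA\mymod{N'}$. A direct computation gives $j(\delta,z)=\tfrac{C'fz+D}{|C'fz+D|}$ and $\operatorname{Im}(\delta z)=\tfrac{y}{N''\,|C'fz+D|^{2}}$, so the analytic part of the summand is $\big(\tfrac{|C'fz+D|}{C'fz+D}\big)^{k}\tfrac{y^{s}}{(N'')^{s}\,|C'fz+D|^{2s}}$, already of the shape appearing in \eqref{eq:EufInsideLemma}.

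Then I would compute the nebentypus weight. Writing $\sigma_{1/w}=\bpm \sqrt{N''} & 0 \\ w\sqrt{N''} & 1/\sqrt{N''} \epm$, the lower-right entry of $\sigma_{1/w}\delta$ is $wB+D$, so the weight is $\overline\psi(wB+D)$. By singularity $\psi$ has period $[f,N']=f_0N_0'$ and factors as $\psi^{(f_0)}\psi^{(N_0')}$. Reducing modulo $f_0\mid f$ kills $wB$ and leaves $\psi^{(f_0)}(D)$; for the $N_0'$-part, combining $AD-BC=1$ with $C\equiv -wA\mymod{N_0'}$ (which follows from $C'\equiv -uA\mymod{N'}$) gives $A(D+wB)\equiv 1\mymod{N_0'}$, and then $A\equiv -\overline u\,C'\mymod{N'}$ (invert $u$, legitimate since $(u,N)=1$) leaves $\psi^{(N_0')}(-u\overline{C'})$. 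Taking the complex conjugate of the product turns this into $\psi^{(N_0')}(-\overline u\,C')\,\psi^{(f_0)}(\overline D)$, matching \eqref{eq:EufInsideLemma}.

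The step I expect to be the main obstacle is identifying the coset space explicitly. Left multiplication by $\bpm 1 & t \\ & 1 \epm$ sends $(A,B,C,D)$ to $(A+tCN'',\,B+tDN'',\,C,\,D)$, so (together with $-I$) an orbit is determined by $(C,D)$ up to simultaneous sign, which I normalize by taking $C\ge 0$. The case $C=0$ forces $A=D=\pm1$ and $N\mid w$, hence $f=N$ and $N''=1$, and this one orbit contributes precisely $\delta_{f=N}y^{s}$. For $C=fC'>0$ and a fixed $D$ with $(D,fC')=1$, the admissible values of $A$ are exactly those with $A\equiv\overline D\mymod{fC'}$ and $A\equiv -\overline u\,C'\mymod{N'}$; I would check that these are compatible exactly when $D\equiv-\overline{C'}u\mymod{(f,N')}$ (here one also observes that $(C',N')=1$ is forced), and that when compatible they determine $A$ modulo $\operatorname{lcm}(fC',N')=fC'N''$, hence to a single $\Gamma_\infty$-orbit. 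Thus for each $C'>0$ with $(C',N')=1$ the orbits are indexed precisely by integers $D$ with $(D,fC')=1$ and $D\equiv-\overline{C'}u\mymod{(f,N')}$, and inserting the analytic factor and the weight computed above gives \eqref{eq:EufInsideLemma}. The genuine work throughout is keeping the divisor data consistent --- above all the identity $\operatorname{lcm}(fC',N')=fC'N''$ and the compatibility condition on $D$, which together are the source of both the cusp-width factor $N''$ and the congruence on $D$ in the statement.
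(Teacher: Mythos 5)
Your proposal is correct and follows essentially the same route as the paper: unfold via the scaling matrix, invoke the double-coset description of Lemma \ref{lemma:DoubleCosetFormula}, show the $\Gamma_\infty$-orbits are indexed by pairs $(C,D)=(fC',D)$ satisfying exactly the stated coprimality and congruence conditions (with $A$ pinned down modulo $\mathrm{lcm}(fC',N')=fC'N''$), and evaluate the character by splitting $\psi$ into its $f_0$- and $N_0'$-parts. Your direct evaluation of $\overline{\psi}(wB+D)$ is a slightly more streamlined version of the paper's computation of $\psi(A)$ via the general two-congruence discussion, but the content is identical.
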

\begin{proof}
 From \eqref{eq:Eadef}, and changing variables, we have
\begin{multline*}
 E_{1/w}(z,s, \psi) = \sum_{\gamma \in \Gamma_{1/w} \backslash \Gamma} \overline{\psi}(\gamma) 
 j(\sigma_{1/w}^{-1} \gamma, z)^{-k}
 \text{Im}(\sigma_{1/w}^{-1} \gamma z)^s 
 \\
 = \sum_{\tau \in \Gamma_{\infty} \backslash \sigma_{1/w}^{-1} \Gamma} \overline{\psi}(\sigma_{1/w} \tau) 
 j(\tau, z)^{-k}
 \text{Im}(\tau z)^s.
\end{multline*}
For the evaluation of $\overline{\psi}(\sigma_{1/w} \tau)$, note that
\begin{equation*}
 \bpm 1&0\\ w&1 \epm  \bpm \sqrt{N''}  &0\\ 0 &1/\sqrt{N''} \epm \bpm \frac{A}{\sqrt{N''}} &\frac{B}{\sqrt{N''}} \\ C \sqrt{N''} &D \sqrt{N''} \epm 
 =
 \begin{pmatrix}
  A & B \\
  C+Aw & D+ Bw
 \end{pmatrix}
.
\end{equation*}
As a consistency check, observe that if we translate $\tau$ on the left by $(\begin{smallmatrix} 1 & n \\ 0 & 1 \end{smallmatrix})$ then that replaces $B$ by $B + D n N''$, and so the lower-right entry changes from $D+Bw$ to $D+Bw + DwnN''$.  Since $wN'' = u \frac{N}{(f,N')}$, and $\psi$ is assumed to be periodic modulo $\frac{N}{(f,N')}$, this shows that $\psi$ is well-defined under such translations.

% If we let $\gamma = \sigma_{\mathfrak{a}} \tau$, then as $\gamma$ runs over $\Gamma_{\mathfrak{a}} \backslash \Gamma$, then $\tau$ runs over $\Gamma_{\infty} \backslash \sigma_{\mathfrak{a}}^{-1} \Gamma$.  Thus
% \begin{equation}
%  E_{1/w}(z,s) = \sum_{\tau \in \Gamma_{\infty} \backslash \sigma_{1/w}^{-1} \Gamma} \text{Im}(\tau z)^s.
% \end{equation}
Next we need to work out representatives for $\Gamma_{\infty} \backslash \sigma_{1/w}^{-1} \Gamma$, in terms of the lower row of matrices occuring in \eqref{eq:doublecoset}.
In the case of the identity coset with $C=0, D=1$, we obtain $\psi(D+Bw) = \psi(1+Bw) = \psi(1)=1$, since this coset occurs only when $f=N$, i.e., $\frac{1}{uf} \sim \infty$.  This leads to the term $\delta_{f=N} y^s$.

%Also recall $\psi(-1) = (-1)^k$.  
From now on, consider the non-identity cosets.  
Note that the action of $\Gamma_{\infty}$ does not affect the congruence linking $A$ to $C$.
%, which follows from $w^2 N'' \equiv 0 \pmod{N}$, observing $w^2 N'' = w'' w' N$.  
Consider the conditions
\begin{equation}
\label{eq:CDconditions}
C > 0, \quad (C,D) = 1, \quad C=fC',   \quad (C',N') =1, \quad \text{and} \quad D \equiv - u \overline{C'} \pmod{(f,N')}.
\end{equation}
We claim that
\begin{equation}
\label{eq:doublecosetDecomposition}
 \Gamma_{\infty} \backslash \sigma_{1/w}^{-1} \Gamma = 
 \delta_{f=N} \Gamma_{\infty} \cup 
 \left\{ \begin{pmatrix} * & * \\ C \sqrt{N''} & D \sqrt{N''} \end{pmatrix} : \eqref{eq:CDconditions} \text{ holds} \right\},
\end{equation}
as a disjoint union.
Moreover, the value of $\overline{\psi}(\sigma_{1/w} \tau)$ is determined by the conditions \eqref{eq:CDconditions} (we will derive a formula for it within the proof).

%The proof of \cite[Lemma 3.6]{KY} shows that if there exists $C, D \in \mathbb{Z}$ such that 

\emph{Proof of claim.}
First assume that \eqref{eq:CDconditions} holds.
From $(C,D) = 1$, there exists integers $A_0, B_0$ so that $A_0D - B_0 C = 1$.  Then $A_0 \equiv \overline{D} \pmod{C}$, and so from the congruence on $D$ in \eqref{eq:CDconditions}, and the fact that $f|C$, we have $A_0 \equiv - C' \overline{u} \pmod{(f,N')}$.  Let $x,y \in \mathbb{Z}$ be such that $A_0 =- \overline{u} C' + fx + N'y$.  We next want to find $n \in \mz$ so that $A = A_0 + nC$ satisfies $C \equiv - w A \pmod{N}$, which in turn is equivalent to $A \equiv -\overline{u} C' \pmod{N'}$.  For this, we have $A = -\overline{u} C' + fx + N'y + n C'f \equiv -\overline{u} C' + f(x+nC') \pmod{N'}$, so choosing $n \equiv - x \overline{C'} \pmod{N'}$ finishes the job.

Next we show the conditions \eqref{eq:CDconditions} follow from the conditions on the right hand side of \eqref{eq:doublecoset}.  This can be seen as follows.  The determinant equation obviously implies $(C,D) = 1$, and using the congruence we have $1 = AD - BC \equiv A(D+Bw) \pmod{N}$, whence $(A,N) = 1$, and so $(C,N) = (w,N) = f$.  That is, we may write $C = f C'$ with $(C',N') = 1$.  The congruence on $D$ follows from $D \equiv \overline{A} \pmod{|C|}$ and $A \equiv - \overline{u} C' \pmod{N'}$, which together give $D \equiv - u \overline{C'} \pmod{(f,N')}$, as claimed.  The condition $C > 0$ may be arranged by multiplication by $-I$.

Finally,
we show that $\overline{\psi}(\sigma_{1/w} \tau)$ only depends on the data appearing in \eqref{eq:CDconditions}.  Explicitly,
\begin{equation}
\label{eq:psiofAformula}
\overline{\psi}(\sigma_{1/w} \tau) = 
\psi(A) = 
\psi^{(N_0')}(-\overline{u} C') \psi^{(f_0)}(\overline{D}).
\end{equation}

We first show that given $C,D \in \mathbb{Z}$ satisfying \eqref{eq:CDconditions}, the value of $\psi(A)$ is uniquely determined.
The determinant condition on $A$ is $A \equiv \overline{D} \pmod{C}$ and the congruence is $A \equiv - \overline{u} C' \pmod{N'}$.  This determines $A$ modulo the least common multiple of $C$ and $N'$, namely $\frac{C N'}{(C,N')} = \frac{CN'}{(f,N')} = CN''$ (one can also see how the left $\Gamma_{\infty}$ action translates $A$ by this).  The condition that these two congruences on $A$ are consistent is precisely the congruence on $D$ in \eqref{eq:CDconditions}.  These two congruences on $A$ uniquely determine $A$ modulo $\frac{C' f N'}{(f,N/f)} = C' \frac{N}{(f,N/f)}$.  Since $\psi$ is periodic modulo $\frac{N}{(f,N/f)}$, this means that $\psi(A)$ is uniquely determined.
%  Our claim is that $\psi(A) = \psi^{(N_0')}(-\overline{u}C') \psi^{(f_0)}(\overline{D})$.
Finally, we need to show \eqref{eq:psiofAformula}.

We take an interlude to discuss the problem in more general terms.  Suppose that we have a pair of congruences  $x \equiv a \pmod{Q}$ and $x \equiv b \pmod{R}$, and for consistency, we have $a \equiv b \pmod{(Q,R)}$.  We wish to evaluate $\chi(x)$, where $\chi$ is a Dirichlet character modulo $[Q,R]$.  
There exist integers $Q_0, R_0$ with the following properties:
\begin{equation*}
 Q_0 | Q, \qquad R_0 | R, \qquad [Q,R] = Q_0 R_0, \qquad (Q_0, R_0) = 1.
\end{equation*}
%The choice of $q_0, r_0$ may not be unique, since a prime power that exactly divides both $q$ and $r$ may be placed either in $q_0$ or in $r_0$.  
One may check that
\begin{equation*}
 (Q,R) = \frac{Q}{Q_0} \frac{R}{R_0}, 
 \qquad \text{and} \qquad
 (Q_0, Q/Q_0) = 1 = (R_0, R/R_0).
\end{equation*}
The former equation follows from $(Q,R) = \frac{QR}{[Q,R]}$, and the latter follows by noting that a prime power $p^k$ exactly dividing $Q_0$ has either $k=0$ or $p^k$ exactly dividing $Q$ (and similarly for prime powers dividing $R_0$).  We also have that $\frac{R}{R_0} | Q_0$ and $\frac{Q}{Q_0} | R_0$, which is deduced from $(R/R_0, Q_0) = (R/R_0, R_0 Q_0) = (R/R_0, [R,Q]) = R/R_0$, and similarly for the other formula.

Using the above coprimality formulas, the system of congruences is equivalent to $x \equiv a \pmod{Q_0}$ and $x \equiv b \pmod{R_0}$, under the consistency condition $a \equiv b \pmod{(Q,R)}$.  Corresponding to the above notation, we may write $\chi = \chi_{1} \chi_{2}$ where $\chi_{1}$ is modulo $Q_0$ and $\chi_2$ is modulo $R_0$, and then $\chi(x) = \chi_1(a) \chi_2(b)$.  This discussion proves the claim, and completes the proof of the lemma.
% Based on the previous discussion, we see that we may factor $\psi$ as $\psi^{(N_0')} \psi^{(f_0)}$, where the modulus of $\psi^{(N_0')}$ is a certain divisor of $N'$, say $N'_0$ and where the modulus of $\psi^{(f_0)}$ is a certain divisor of $f$, say $f_0$.  
% The moduli satisfy
% \begin{equation}
%  f_0 N'_0 = \frac{f N'}{(f,N')} = \frac{N}{(f,N')} = [f, N'], \qquad \text{and} \qquad
%  (f_0, N_0') = 1,
% \end{equation}
% and we have
% \begin{equation}
%  \psi(A) = \psi^{(N_0')}(-\overline{u} C') \psi^{(f_0)}(\overline{D}).
% \end{equation}
We recall for emphasis that the consistency condition is recorded in \eqref{eq:CDconditions}.
%We may observe that $f| f_0 N_0'$ and $N' | f_0 N_0'$, which will be useful later.
\end{proof}

\begin{proof}[Proof of Theorem \ref{thm:EcuspInTermsofEchichi}]
We continue with \eqref{eq:EufInsideLemma}.
The first step is to detect the congruence $D \equiv - \overline{C'} u \pmod{(f,N')}$ with Dirichlet characters; for this, observe that $(DC'u, (f,N')) = 1$ holds from the other listed coprimality conditions.  Thus
\begin{multline}
 E_{\frac{1}{uf}}(z,s,\psi) = \delta_{f=N} y^s + 
\Big[ \frac{\frac{y^s}{(N'')^s}}{\varphi((f,N'))} \sum_{\chi \shortmod{(f,N')}} (\overline{\chi} \overline{ \psi}^{(N_0')})(-u)
\\
 \sum_{\substack{(D, fC') = 1 \\ C' \geq 1 \\ (C',N') = 1 }}
 \frac{(\chi \psi^{(N_0')})(C') (\chi \overline{\psi}^{(f_0)})(D) }{|C'f z + D|^{2s}}
\Big(\frac{|C' fz + D|}{C'fz + D}\Big)^k
\Big] 
 .
\end{multline}

Next we claim that we may omit some of the above coprimality conditions.  The modulus of $\chi \psi^{(N_0')}$ is the least common multiple of $(f,N')$ and $N_0'$, and equals
\begin{equation*}
 \frac{(f, N') N_0'}{(f, N', N_0')} = \frac{(f, N') N_0' f_0}{(f, N_0') f_0} = \frac{N}{(\frac{f}{f_0} f_0,N_0') f_0} 
 = \frac{N}{(f, f_0 N_0')} = \frac{N}{f} = N'.
\end{equation*}
Therefore we may omit the condition $(C',N') = 1$.  A similar calculation shows that the modulus of $\chi \overline{\psi}^{(f_0)}$ is $f$, and that we may omit the condition $(D,f) = 1$.
Thus
\begin{multline*}
 E_{\frac{1}{uf}}(z,s,\psi) = \delta_{f=N} y^s + 
 \Big[
 \frac{\frac{y^s}{(N'')^s}}{\varphi((f,N'))} \sum_{\chi \shortmod{(f,N')}} (\overline{\chi} \overline{\psi}^{(N_0')})(-u)
 \\
 \sum_{\substack{(D, C') = 1 \\ C' \geq 1  }}
 \frac{(\chi \psi^{(N_0')})(C') (\chi \overline{\psi}^{(f_0)})(D) }{|C'f z + D|^{2s}}
\Big(\frac{|C' fz + D|}{C'fz + D}\Big)^k \Big]. 
\end{multline*}
The term $C'=0$ may be returned to the sum, because it only occurs when $N'=1$ (i.e., $f=N$), and then 
% giving
% \begin{equation}
%  E_{\frac{u}{f}}(z,s,\psi) = 
%  \frac{\frac{y^s}{(N'')^s}}{\varphi((f,N'))} \sum_{\chi \shortmod{(f,N')}} \overline{(\chi \psi^{(N_0')})}(-u)
% \frac12
%  \sum_{\substack{(D, C') = 1  }}
%  \frac{(\chi \psi^{(N_0')})(C') (\chi \overline{\psi^{(f_0)}})(D) }{|C'f z + D|^{2s}}.
% \end{equation}
consulting \eqref{eq:Echi1chi2def}, we have
\begin{equation}
 E_{\frac{1}{uf}}(z,s,\psi) = \frac{1}{(fN'')^s}
 \frac{1}{\varphi((f,N'))} \sum_{\chi \shortmod{(f,N')}} (\overline{\chi} \overline{\psi}^{(N_0')})(-u)
E_{\chi \psi^{(N_0')}, \chi \overline{\psi}^{(f_0)}}(z,s).
\end{equation}
Applying Lemma \ref{lemma:EisensteinPrimitivevsNonPrimitive}, we have
\begin{multline}
 E_{\frac{1}{uf}}(z,s,\psi) = \frac{1}{(fN'')^s}
 \frac{1}{\varphi((f,N'))} \sum_{\chi \shortmod{(f,N')}} (\overline{\chi} \overline{\psi}^{(N_0')})(-u)
 \frac{L(2s, (\chi \psi^{(N_0')})^* (\chi \overline{\psi}^{(f_0)})^*)}{L(2s, \chi^2 \psi^{(N_0')} \overline{\psi}^{(f_0)} )}
 \\
\sum_{a|f} \sum_{b|N'} \frac{\mu(a) \mu(b) (\chi \psi^{(N_0')})^*(b)  (\chi \overline{\psi}^{(f_0)})^*(a)}{(ab)^s}
E_{(\chi \psi^{(N_0')})^*, (\chi \overline{\psi}^{(f_0)})^*}\Big(\frac{b f }{a q_2} z, s\Big),
\end{multline}
where $q_2$ (say) is the conductor of $(\chi \overline{\psi}^{(f_0)})^*$.

Next we set $(\chi \psi^{(N_0')})^* = \chi_1$ and $(\chi \overline{\psi}^{(f_0)})^* = \chi_2$, where $\chi_i$ is primitive of modulus $q_i$, $i=1,2$.  Note that
\begin{equation}
 \frac{L(2s, (\chi \psi^{(N_0')})^* (\chi \overline{\psi}^{(f_0)})^*)}{L(2s, \chi^2 \psi^{(N_0')} \overline{\psi}^{(f_0)} )}
 = \prod_{p|N} \Big(1- \frac{\chi_1(p) \chi_2(p)}{p^{2s}}\Big)^{-1},
\end{equation}
which only depends on $\chi_1 \chi_2$.  Also, a necessary condition on $\chi_1$ and $\chi_2$ is that $\chi_1 \overline{\chi_2} \sim \psi$.
Therefore, by moving the sum over $\chi$ to the inside, we have
\begin{multline}
\label{eq:EufEchichiWithSumOverchinotsimplifiedYet}
 E_{\frac{1}{uf}}(z,s,\psi) = \frac{1}{(fN'')^s}
 \frac{1}{\varphi((f,N'))} 
 \sum_{q_1 | N'} \sum_{q_2 | f} 
 \sumstar_{\substack{\chi_1 \shortmod{q_1} \\ \chi_2 \shortmod{q_2} \\ \chi_1 \overline{\chi_2} \sim \psi}}
  \overline{\chi_1}(-u)
\prod_{p|N} \Big(1- \frac{\chi_1(p) \chi_2(p)}{p^{2s}}\Big)^{-1}
 \\
\sum_{a|f} \sum_{b|N'} \frac{\mu(a) \mu(b) \chi_1(b)  \chi_2(a)}{(ab)^s}
E_{\chi_1, \chi_2}\Big(\frac{ bf}{aq_2 } z, s\Big)
\sum_{\chi \shortmod{(f,N')}} \delta(\chi, \chi_1, \chi_2, \psi)
\end{multline}
where $\delta(\chi,\chi_1, \chi_2, \psi)$ is the indicator function of
\begin{equation}
\label{eq:ChiPsiConditions}
 (\chi \psi^{(N_0')})^* = \chi_1, \qquad (\chi \overline{\psi}^{(f_0)})^* = \chi_2.
\end{equation}
Our claim is that $\sum_{\chi} \delta(\chi, \chi_1, \chi_2, \psi)=1$ under the conditions appearing in \eqref{eq:EufEchichiWithSumOverchinotsimplifiedYet}, which will %give  
%Using $E_{u/f} = \psi(u) E_{1/(uf)}$ 
%will then 
give \eqref{eq:EcuspInTermsofEchichi}, concluding the proof of the theorem.

Now we prove the claim.  Using that $(f_0, N_0') = 1$ and that a prime divides $N$ iff it divides $f_0 N_0' = [f,N']$, we may uniquely factor $\chi_i = \chi_i^{(f_0)} \chi_i^{(N_0')}$ where $\chi_i^{(f_0)}$ has modulus dividing $f_0$, and $\chi_i^{(N_0')}$ has modulus dividing $N_0'$.  We may also factor $\chi$ in the same way, by $\chi = \chi^{(f_0)} \chi^{(N_0')}$; in addition, we may suppose that $\chi$ is primitive of modulus dividing $ (f,N')$.
Recall also that $\psi^{(N_0')}$ has modulus $N_0'$, and $\psi^{(f_0)}$ has modulus $f_0$.  Thus the assumption $\chi_1 \overline{\chi_2} \sim \psi$ is equivalent to
\begin{equation}
\label{eq:chi1chi2psi1psi2conditions}
 \chi_1^{(N_0')} \overline{\chi_2}^{(N_0')} \sim \psi^{(N_0')}, 
 \qquad \text{and} \qquad
 \chi_1^{(f_0)} \overline{\chi_2}^{(f_0)} \sim \psi^{(f_0)}.
\end{equation}
The condition $(\chi \psi^{(N_0')})^* = \chi_1$ from \eqref{eq:ChiPsiConditions} is in turn equivalent to
\begin{equation}
 \chi^{(f_0)} (\chi^{(N_0')} \psi^{(N_0')})^* = \chi_1^{(f_0)} \chi_1^{(N_0')},
\end{equation}
that is,
\begin{equation}
\chi^{(f_0)} = \chi_1^{(f_0)} \qquad \text{and} \qquad (\chi^{(N_0')} \psi^{(N_0')})^* = \chi_1^{(N_0')}.
\end{equation}
Likewise, for the equation with $\chi_2$, we obtain
\begin{equation}
\chi^{(N_0')} = \chi_2^{(N_0')} \qquad \text{and} \qquad (\chi^{(f_0)} \overline{\psi}^{(f_0)})^* = \chi_2^{(f_0)}.
\end{equation}
From these two displayed equations, we see that $\chi$ is uniquely determined by $\chi = \chi_1^{(f_0)} \chi_2^{(N_0')}$.  Once this choice is made, one can check that \eqref{eq:ChiPsiConditions} holds using \eqref{eq:chi1chi2psi1psi2conditions}.  The only remaining loose end is to check that this purported choice of $\chi = \chi_1^{(f_0)} \chi_2^{(N_0')}$ has modulus dividing $(f,N')$.
 That is, we need that the $f_0$-part of $q_1$ divides $(f,N')$, and similarly that the $N_0'$-part of $q_2$ divides $(f,N')$.  
%For this, we note that
% \begin{equation}
%  (f,N') = 
%  \underbrace{\frac{f}{f_0}}_{\text{divides $N_0'$}} 
%  \times
%  \underbrace{\frac{N'}{N_0'}}_{\text{divides $f_0$}},
% \end{equation}
% and so the $f_0$-part of 
Note that
\begin{equation*}
 N' = \underbrace{N_0'}_{\text{$N_0'$-part}} \times \underbrace{\frac{N'}{N_0'}}_{\text{$f_0$-part}},
 \qquad
 f = \underbrace{f_0}_{\text{$f_0$-part}} \times \underbrace{\frac{f}{f_0}}_{\text{$N_0'$-part}},
\end{equation*}
and also that
\begin{equation*}
 (f,N') = 
 \underbrace{\frac{f}{f_0}}_{\text{$N_0'$-part}}
 \times
 \underbrace{\frac{N'}{N_0'}}_{\text{$f_0$-part}}.
\end{equation*}
These equations show that the $f_0$-part of $N'$ equals the $f_0$-part of $(f,N')$ (both are equal to $\frac{N'}{N_0'}$), and so we conclude that the $f_0$-part of $q_1$ divides $(f,N')$.  A similar argument holds for the $N_0'$-part for the other factor.  This shows the claim, and completes the proof of Theorem \ref{thm:EcuspInTermsofEchichi}.
\end{proof}

\section{Inversion} 
 The purpose of this section is to invert \eqref{eq:EcuspInTermsofEchichi}, which is given by the following:
 \begin{mytheo}
 \label{thm:EchichiInTermsofEa}
  Let $\chi_i$, $i=1,2$, be primitive characters modulo $q_i$ with $q_1 q_2 |N$, and write $N = q_1 q_2 L$.  Suppose $B |L$, and write $L = AB$.  Then 
  \begin{equation}
  \label{eq:EchichiInTermsofEa}
E_{\chi_1, \chi_2}(Bz, s) =  
 \mathop{\sum_{d|A} \sum_{e|B}}_{(d,e)=1} \frac{\chi_1(d) \chi_2(e)}{(de)^s}
 \\
 \Big(\frac{N}{(q_2 \frac{Bd}{e}, q_1 \frac{Ae}{d})}\Big)^s 
 \sumstar_{u \shortmod{ (q_2 \frac{Bd}{e}, q_1 \frac{Ae}{d})}} \chi_1(-u)  E_{\frac{1}{u q_2 \frac{Bd}{e}}}(z,s, \psi).
\end{equation}
Here the sum is over $u$ is over a set of representatives for $(\mathbb{Z}/(q_2 \frac{Bd}{e}, q_1 \frac{Ae}{d}) \mathbb{Z})^*$, chosen coprime to $N$, and $\psi$ is modulo $N$, induced by $\chi_1 \overline{\chi_2}$.
\end{mytheo}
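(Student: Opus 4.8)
The plan is to carry out the dual of the computation behind Theorem \ref{thm:EcuspInTermsofEchichi}: starting from the series \eqref{eq:Echi1chi2def} defining $E_{\chi_1,\chi_2}(Bz,s)$, I will sort its summands according to which cusp of $\Gamma_0(N)$ they contribute to, and recognize each resulting packet of terms inside the expansion of $E_{1/uf}(z,s,\psi)$ provided by Lemma \ref{lemma:EcuspIntermediateFormula}. Concretely, in
$$E_{\chi_1,\chi_2}(Bz,s)=\frac12\sum_{(c,d)=1}\frac{(q_2By)^s\chi_1(c)\chi_2(d)}{|cq_2Bz+d|^{2s}}\Big(\frac{|cq_2Bz+d|}{cq_2Bz+d}\Big)^k$$
the factors $\chi_1(c)$ and $\chi_2(d)$ restrict the sum to $(c,q_1)=1$ and $(d,q_2)=1$, whence $g:=\gcd(cq_2B,d)=\gcd(B,d)\mid B$; writing $cq_2Bz+d=g(Cz+D)$ with $C=cq_2B/g$, $D=d/g$ coprime, the summand picks up a factor $g^{-2s}$ and its bottom row $(C,D)$ is primitive. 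Setting $f:=\gcd(C,N)$, one checks — crucially using $(c,q_1)=1$ — that $q_2\mid f$ and $f=q_2m$ with $m\mid L$; moreover $(C',N/f)=1$ and $(D,fC')=1$ come for free from $\gcd(C,D)=1$ and $f\mid C$, so $(C',D)$ (with $C=fC'$) lies exactly in the index set of Lemma \ref{lemma:EcuspIntermediateFormula}, and $u$ is read off from $D\equiv-\overline{C'}u\ (\mathrm{mod}\ (f,N/f))$.

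Once the cusp datum is fixed, detecting the residue of $u$ costs the sum $\sumstar_u\chi_1(-u)$, and one matches $\chi_1(c)\chi_2(d)$ against the weight $\psi^{(N_0')}(-\overline uC')\psi^{(f_0)}(\overline D)$ occurring in Lemma \ref{lemma:EcuspIntermediateFormula}; the residual $g^{-2s}$ together with the prefactor $y^s/(N'')^s$ should assemble into $\big(\tfrac{N}{(f,N/f)}\big)^s$ and the divisor weight $\tfrac{\chi_1(d)\chi_2(e)}{(de)^s}$ after invoking the bijection $(d,e)\mapsto Bd/e$ from pairs $d\mid A$, $e\mid B$ with $(d,e)=1$ onto the divisors $m$ of $L$ (a prime-by-prime verification). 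Two book-keeping devices from the proof of Theorem \ref{thm:EcuspInTermsofEchichi} reappear: the factorization $[f,N/f]=f_0N_0'$ with $(f_0,N_0')=1$ and the attendant splitting $\psi=\psi^{(f_0)}\psi^{(N_0')}$, which makes $\psi^{(N_0')}(u)E_{1/uf}$ a well-defined function of $u\bmod(f,N/f)$ so that orthogonality of characters mod $(f,N/f)$ applies cleanly; and Lemma \ref{lemma:EisensteinPrimitivevsNonPrimitive}, used in the reverse (Dirichlet-convolution-inverse) direction to pass between the primitive $E_{\chi_1,\chi_2}$ and the non-primitive Eisenstein series of exact level $N$ that naturally index the cusps. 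In this inversion the ratio of $L$-values from Lemma \ref{lemma:EisensteinPrimitivevsNonPrimitive} cancels, which is why \eqref{eq:EchichiInTermsofEa}, unlike \eqref{eq:EcuspInTermsofEchichi}, carries no $L$-factor. An equivalent and possibly shorter route is to use the newform theory recalled in Section \ref{section:newformdiscussion}: the family $\{E_{\chi_1,\chi_2}(Bz,s):q_1q_2B\mid N,\ \chi_1\overline{\chi_2}\sim\psi,\ \chi_i\text{ primitive}\}$ is a basis of the Eisenstein subspace spanned by the singular-cusp series $E_{1/uf}$, of the same dimension, so it suffices to substitute \eqref{eq:EchichiInTermsofEa} into Theorem \ref{thm:EcuspInTermsofEchichi} and check that the composite collapses to the identity — the same combinatorial identity, read through the change-of-basis matrix.

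The main obstacle is precisely this combinatorics: one must verify, prime by prime, how the common divisor $g=\gcd(B,d)$ and the part of $c$ meeting $L$ interact with $\gcd(C,N)$; confirm the bijection $(d\mid A,\ e\mid B,\ (d,e)=1)\leftrightarrow(m\mid L)$ via $m=Bd/e$; and — the genuinely delicate point — track the $f_0/N_0'$ splitting so that the mod-$(f,N/f)$ character sums telescope to a single surviving term with no residual $L$-factor. The analytic content, by contrast, is free: absolute convergence for $\mathrm{Re}(s)>1$ and meromorphic continuation are inherited termwise from the corresponding properties of $E_{\chi_1,\chi_2}$ and $E_{1/uf}$ established earlier, so no new estimates are needed.
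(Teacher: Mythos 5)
Your proposal is sound, but it is worth separating its two halves. Your ``shorter alternative route'' at the end --- substitute the claimed expansion into Theorem \ref{thm:EcuspInTermsofEchichi} and check that the composite collapses --- is essentially the paper's actual proof: the paper introduces the twisted $u$-averages $D_{\chi_1,\chi_2,f}(z,s,\psi)=\sumstar_{u}\chi_1(-u)E_{\frac{1}{uf}}(z,s,\psi)$, inserts \eqref{eq:EcuspInTermsofEchichi} into them, uses orthogonality of characters modulo $(f,N/f)$ (together with the factorization $\chi_1=\psi^{(N_0')}\chi_1^{(f_0)}\chi_2^{(N_0')}$ to make the $u$-sum well defined) to kill every pair $(\eta_1,\eta_2)\neq(\chi_1,\chi_2)$, and is then left with a pure divisor-sum relation $D_{\chi_1,\chi_2,q_2g}=\sum_{a|g}\sum_{b|L/g}(\cdots)E_{\chi_1,\chi_2}(\tfrac{bg}{a}z,s)$, which it inverts by an abstract M\"obius-inversion lemma for completely multiplicative weights (Lemma \ref{lemma:InversionElementaryLemma}, proved in Section \ref{section:Inversion}); the product $\prod_{p|L}(1-\omega_1\omega_2(p))^{-1}$ produced by that lemma is exactly what cancels the $L$-function ratio, confirming your observation about why no $L$-factor survives. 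Your primary route --- unfolding $E_{\chi_1,\chi_2}(Bz,s)$ term by term, extracting $g=\gcd(B,d)$, and sorting the primitive pairs $(C,D)$ by the cusp $f=\gcd(C,N)$ so as to match them against Lemma \ref{lemma:EcuspIntermediateFormula} --- is genuinely different and avoids invoking Theorem \ref{thm:EcuspInTermsofEchichi} as a black box, at the cost of redoing its unfolding in reverse; it should work, since the map $(c,d)\mapsto(C,D)$ is injective on its domain and the weights $\psi^{(N_0')}(-\overline{u}C')\psi^{(f_0)}(\overline{D})$ can be reassembled from $\chi_1(c)\chi_2(d)$ via \eqref{eq:chi1IsWellDefined}. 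Be aware, though, that the ``prime-by-prime verification'' you defer is not a footnote but the bulk of the argument: the surjectivity of $(c,d)\mapsto(C,D)$ onto the union of the index sets of the $E_{1/uf}$, the bijection $(d,e)\mapsto Bd/e$, and the matching of the $g^{-2s}$ weights together constitute precisely the combinatorial content that the paper packages into the two-page proof of Lemma \ref{lemma:InversionElementaryLemma}, so a complete write-up along your first route would be no shorter than the paper's.
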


Remark.   Note that $\frac{Bd}{e}$ ranges over certain divisors of $L$, so that $E_{\chi_1,\chi_2}(Bz,s)$ is a linear combination of $E_{\frac{1}{uf}}$'s with $f$'s constrained by $q_2 | f$ and $f | q_2 L$.

Within the proof of Theorem \ref{thm:EchichiInTermsofEa}, we shall develop and use properties of functions $D_{\chi_1, \chi_2,f}(z,s, \psi)$ defined by
\begin{equation}
\label{eq:DchiDef}
 D_{\chi_1, \chi_2,f}(z,s, \psi) = \sumstar_{u \shortmod{(f, N/f)}} \chi_1(-u)  E_{\frac{1}{uf}}(z,s, \psi),
\end{equation}
where $\chi_i$ is primitive modulo $q_i$, $N = q_1 q_2 L$, $\chi_1 \overline{\chi_2} \sim \psi$,  $q_2 | f$ and $f | q_2 L$.   Notice that \eqref{eq:EchichiInTermsofEa} may be expressed as 
\begin{equation}
\label{eq:EchichiInTermsofDchi}
 E_{\chi_1, \chi_2}(Bz, s) =  N^s \mathop{\sum_{d|A} \sum_{e|B}}_{(d,e)=1} \frac{\chi_1(d) \chi_2(e)}{(de)^s} 
\frac{1}{(q_2 \frac{Bd}{e}, q_1 \frac{Ae}{d})^s} 
 D_{\chi_1, \chi_2, q_2 \frac{Bd}{e}}(z,s,\psi).
\end{equation}
It is not obvious from \eqref{eq:DchiDef} that $D_{\chi_1,\chi_2,f}$ is well-defined.  To see this, first note 
\begin{equation}
\label{eq:chi1IsWellDefined}
\chi_1 = \chi_1^{(f_0)} \chi_1^{(N_0')} = \psi^{(N_0')} \chi_1^{(f_0)} \chi_2^{(N_0')}.
\end{equation}
Now, $\psi^{(N_0')}(u) E_{\frac{1}{uf}}$ is well-defined, as observed in the paragraph preceding Lemma \ref{lemma:EcuspIntermediateFormula}.  In addition, one may directly check that $\chi_1^{(f_0)}$ is periodic modulo $f$ (since $f_0|f$) and modulo $N'$ (since $q_1 | N'$), and is therefore periodic modulo $(f,N')$.  A similar argument holds for $\chi_2^{(N_0')}$.

\begin{proof}
Let $q_1, q_2, L, A, B$ be as in the statement of the theorem.  Set
$f = q_2 g$, where $g|L$. Then 
 $N' = N/f = q_1 L/g$,
$(f,N') =  (q_2 g, q_1 \frac{L}{g})$, and $N'' = \frac{N'}{(N',f)}  = \frac{q_1 \frac{L}{g}}{(q_2 g, q_1 \frac{L}{g} )}$.  Our first step is to derive a formula for $D_{\chi_1, \chi_2, f}(z,s,\psi)$ by inserting \eqref{eq:EcuspInTermsofEchichi} into the definition \eqref{eq:DchiDef}, giving
\begin{multline*}
D_{\chi_1, \chi_2, f}(z,s,\psi)= 
 \frac{1}{(fN'')^s}
 \frac{1}{\varphi((f,N/f))} 
 \sum_{k_1 | \frac{N}{f}} \sum_{k_2 | f} 
 \thinspace
 \sumstar_{\substack{\eta_1 \shortmod{k_1} \\ \eta_2 \shortmod{k_2} \\ \eta_1 \overline{\eta_2} \sim \psi}}
  \frac{L(2s,\eta_1 \eta_2)}{L(2s,\eta_1 \eta_2 \chi_{0,N})}
 \\
\sum_{\substack{a|f \\ (a, k_2)=1}} \sum_{\substack{b|\frac{N}{f} \\ (b, k_1) = 1}} \frac{\mu(a) \mu(b) \eta_1(b)  \eta_2(a)}{(ab)^s}
E_{\eta_1, \eta_2}\Big(\frac{b f}{a k_2} z, s\Big)
  \sumstar_{u \shortmod{(f, N/f)}}  
  \chi_1(-u) \overline{\eta_1}(-u).
\end{multline*}
% 
% 
% When doing so, the inner sum is 
% \begin{equation}
% \label{eq:uaverage}
% S= \sumstar_{u \shortmod{(w,N')}} \chi(- u) \overline{\eta}(-u),
% \end{equation}
% where $\chi$ is primitive of modulus $k$ (say), with $k | (w,N')$. First we should remark that this sum is well-defined, because $k$ and $q$ both divide $(w,N')$, and so $[k,q] | (w,N')$ too.  A short excercise with Dirichlet characters shows
% \begin{equation}
%  S = \delta_{\chi = \eta} \phi((w,N')).
% \end{equation}

We claim the inner sum over $u$ equals $\varphi((f,N'))$ if $\chi_1 = \eta_1$, and vanishes otherwise.  For this, apply \eqref{eq:chi1IsWellDefined} to both $\chi_1$ and $\eta_1$, which implies that the sum vanishes unless $\chi_1^{(f_0)} = \eta_1^{(f_0)}$ and $\chi_2^{(N_0')} = \eta_2^{(N_0')}$, recalling that the characters are primitive and that $(f_0,N_0') = 1$.  Using \eqref{eq:chi1IsWellDefined} again, we deduce that $\chi_1 = \eta_1$.
% The inner sum over $u$ equals $\varphi((f,N/f))$ if $\chi_1 = \eta_1$ (whence $k_1=q_1$), and vanishes otherwise, 
% which can be seen in two stages.  First, one sees that the sum vanishes unless $\chi_1 \overline{\eta_1}$ is principal, which implies $\chi_1 = \eta_1$ since both characters are primitive.  Then when $\chi_1 = \eta_1$, we may view this as the sum of the principal character modulo $(f,N/f)$, so it equals $\varphi((f,N/f))$.
It is also necessary to verify that the value $k_1 = q_1$ does indeed occur in the sum, which follows from $\frac{N}{f} = q_1 \frac{L}{g}$ and $g|L$; similarly, $k_2 = q_2$ occurs since $q_2 | f$.

We may next see that from $\eta_1 \overline{\eta_2} \sim \psi \sim \chi_1 \overline{\chi_2}$ that $\eta_2 = \chi_2$ (whence $k_2 = q_2$).
Thus
\begin{equation}
\label{eq:Etwistedavgoveru}
D_{\chi_1, \chi_2, q_2 g}(z,s,\psi)
 =
 \frac{1}{(f  N'')^s} 
 \frac{L(2s,\chi_1 \chi_2)}{L(2s,\chi_1 \chi_2 \chi_{0,N})}
 \sum_{\substack{a|g}}  \sum_{\substack{b | \frac{L}{g} }}
 \frac{\mu(a) \mu(b)}{(ab)^{s}}
  \chi_1(b) \chi_2(a)  
 E_{\chi_1,\chi_2}\Big(\frac{bg}{a } z, s\Big),
\end{equation}
using additionally that $a|f$ may be replaced by $a|g$ and similarly $b|\frac{L}{g}$ since $(a,q_2) = 1$ and $(b,q_1) = 1$.
%Remark. This formula exhibits that $D_{\chi_1, \chi_2, f}(z,s,\psi)$ lies in the oldclass formed from $E_{\chi_1,\chi_2}$.

Next we interject an elementary inversion formula for certain arithmetical functions.
\begin{mylemma}
\label{lemma:InversionElementaryLemma}
 Let $\omega_i$, $i=1,2$ be completely multiplicative functions 
 and suppose $K$ is an arbitrary function defined on the divisors of some positive integer $L$.  For
$g | L$, define
 \begin{equation}
 \label{eq:JintermsofK}
  J(g) = \sum_{a|g} \sum_{b|\frac{L}{g}} \mu(a) \mu(b) \omega_2(a) \omega_1(b)  K\Big(\frac{b g}{a}\Big).
 \end{equation}
 Then with $AB=L$, we have
\begin{equation}
\label{eq:KintermsofJ}
 K(B) = \Big(\prod_{p|L} (1- \omega_1 \omega_2(p))^{-1} \Big) \mathop{\sum_{d|A} \sum_{e|B}}_{(d,e) = 1} \omega_{1}(d) \omega_{2}(e) J\Big(\frac{Bd}{e}\Big).
\end{equation}
\end{mylemma}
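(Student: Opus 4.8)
The plan is to substitute \eqref{eq:JintermsofK} into the right-hand side of \eqref{eq:KintermsofJ} and to verify the resulting identity. Both sides of \eqref{eq:KintermsofJ} are linear in the input function $K$, so it suffices to treat $K=\mathbf{1}_{\{c\}}$ for each divisor $c\mid L$. For such a $K$ the identity factors over coprime factorizations of $L$: if $L=L_1L_2$ with $(L_1,L_2)=1$ and $c=c_1c_2$ with $c_i\mid L_i$, then $\mathbf{1}_{\{c\}}=\mathbf{1}_{\{c_1\}}\cdot\mathbf{1}_{\{c_2\}}$ as a function on divisors of $L$; splitting $g=g_1g_2$, $a=a_1a_2$, $b=b_1b_2$, $d=d_1d_2$, $e=e_1e_2$ along $L=L_1L_2$ (and noting that $(d,e)=1$ becomes $(d_1,e_1)=1$ and $(d_2,e_2)=1$, while $\omega_1,\omega_2,\mu$ are multiplicative and all the divisibility constraints in \eqref{eq:JintermsofK}--\eqref{eq:KintermsofJ} are multiplicative), one sees that $J$, the $(d,e)$-sum on the right of \eqref{eq:KintermsofJ}, and $\prod_{p\mid L}(1-\omega_1\omega_2(p))$ all factor as products of the corresponding quantities for $L_1$ and $L_2$. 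Hence the Lemma for $L_1$ and $L_2$ implies it for $L$, and by induction on the number of prime divisors of $L$ we reduce to the case $L=p^n$.

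For $L=p^n$ put $K_j:=K(p^j)$ and $w_i:=\omega_i(p)$. Only the terms with $a,b\in\{1,p\}$ survive in \eqref{eq:JintermsofK}, so a direct evaluation gives $J(p^m)=(1+w_1w_2)K_m-w_2K_{m-1}-w_1K_{m+1}$ for $0<m<n$, together with the boundary values $J(1)=K_0-w_1K_1$ and $J(p^n)=K_n-w_2K_{n-1}$. For $B=p^i$ the pairs $(d,e)$ with $d\mid p^{n-i}$, $e\mid p^i$, $(d,e)=1$ are exactly $(p^j,1)$ for $0\le j\le n-i$ and $(1,p^j)$ for $0\le j\le i$, overlapping only in $(1,1)$, so the right-hand side of \eqref{eq:KintermsofJ} equals
\[
\Big(\prod_{p\mid L}(1-\omega_1\omega_2(p))\Big)^{-1}\Big(\sum_{j=0}^{i}w_2^{i-j}J(p^j)+\sum_{j=i}^{n}w_1^{j-i}J(p^j)-J(p^i)\Big).
\]
Inserting the formulas for $J(p^m)$, each of the two sums telescopes: $\sum_{j=0}^{i}w_2^{i-j}J(p^j)=K_i-w_1K_{i+1}$, and $\sum_{j=i}^{n}w_1^{j-i}J(p^j)=K_i-w_2K_{i-1}$ (the second follows from the first by the reflection $m\mapsto n-m$, $K_\ell\mapsto K_{n-\ell}$, $w_1\leftrightarrow w_2$, which preserves the relation between $J$ and $K$). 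Adding these and subtracting $J(p^i)=(1+w_1w_2)K_i-w_2K_{i-1}-w_1K_{i+1}$, the bracket collapses to $(1-w_1w_2)K_i$; dividing by $1-w_1w_2$ leaves $K_i=K(p^i)=K(B)$, which is \eqref{eq:KintermsofJ}.

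The only delicate point is bookkeeping. On the multiplicative-reduction side one must carry the condition $(d,e)=1$ faithfully through the factorization. On the prime-power side, the telescoping as written uses the generic formula for $J(p^m)$; at the boundary indices $i\in\{0,1,n-1,n\}$ (and for $n\le 1$) one must instead use $J(1)=K_0-w_1K_1$ and $J(p^n)=K_n-w_2K_{n-1}$, and check that the would-be terms $K_{-1}$ or $K_{n+1}$ never actually occur, so that the telescoping still closes up and produces $(1-w_1w_2)K_i$. I expect this boundary verification, rather than anything conceptual, to be the main obstacle.
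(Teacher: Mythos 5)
Your proof is correct, but it takes a genuinely different route from the paper. The paper substitutes \eqref{eq:JintermsofK} directly into the right-hand side of \eqref{eq:KintermsofJ} for general $L$ and then grinds through a chain of gcd extractions ($(a,d)=g$, $(b,e)=h$, $(a',b')=r$), a change of variables to $\alpha=a''e'$, $\beta=b''d'$, and two M\"obius inversions forcing $\alpha=\beta=1$, leaving a constant $\kappa$ that is finally evaluated ``prime-by-prime by brute force.'' You instead localize at the outset: linearity in $K$ reduces to indicator functions, which factor over coprime splittings of $L$, so the whole identity reduces to $L=p^n$, where it becomes a short telescoping computation. Your approach buys a much lighter combinatorial load (no bookkeeping of nested coprimality conditions), at the cost of the multiplicative-reduction scaffolding; the paper's approach keeps everything global but ends up needing a local check anyway for $\kappa$. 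Your flagged boundary concern does resolve cleanly: with $S_i=\sum_{j\le i}w_2^{i-j}J(p^j)$ and $T_i=\sum_{j\ge i}w_1^{j-i}J(p^j)$, the recursions $S_i=w_2S_{i-1}+J(p^i)$ and $T_i=w_1T_{i+1}+J(p^i)$ give $S_i=K_i-w_1K_{i+1}$ for $i<n$ and $T_i=K_i-w_2K_{i-1}$ for $i>0$, while at the ends one gets $S_n=(1-w_1w_2)K_n$ and $T_0=(1-w_1w_2)K_0$ directly; since at $i=0$ the bracket degenerates to $T_0$ and at $i=n$ to $S_n$, the terms $K_{-1}$ and $K_{n+1}$ never arise and the bracket equals $(1-w_1w_2)K_i$ in every case. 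You should write out this boundary check explicitly to make the argument complete, but nothing conceptual is missing.
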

We defer the proof of the lemma to Section \ref{section:Inversion} in order to complete the proof of Theorem \ref{thm:EchichiInTermsofEa}.  We apply the lemma with $J(g) = ( \frac{N}{(q_2 g, q_1 \frac{L}{g})})^s \frac{L(2s,\chi_1 \chi_2 \chi_{0,N})}{L(2s,\chi_1 \chi_2)} D_{\chi_1, \chi_2, q_2 g}(z,s,\psi)$, $K(B) = E_{\chi_1,\chi_2}(Bz,s)$, and $\omega_i(n) = \chi_i(n) n^{-s}$, obtaining
\begin{multline*}
 E_{\chi_1,\chi_2}(Bz,s) = \prod_{p|L} (1-p^{-2s} \chi_1(p) \chi_2(p))^{-1}
 \mathop{\sum_{d|A} \sum_{e|B}}_{(d,e)=1} \frac{\chi_1(d) \chi_2(e)}{(de)^s}
 \\
 \Big(\frac{N}{(q_2 \frac{Bd}{e}, q_1 \frac{Ae}{d})}\Big)^s \frac{L(2s, \chi_1 \chi_2 \chi_{0,N})}{L(2s, \chi_1 \chi_2)}
 D_{\chi_1, \chi_2,q_2 \frac{Bd}{e}}(z,s).
\end{multline*}
Note that $\prod_{p|L} (1-p^{-2s} \chi_1(p) \chi_2(p))^{-1} = \prod_{p|N} (1-p^{-2s} \chi_1(p) \chi_2(p))^{-1}$, so this factor cancels the ratio of Dirichlet $L$-functions.  Inserting \eqref{eq:DchiDef} into the above formula for $E_{\chi_1,\chi_2}(Bz,s)$ and simplifying, we obtain the theorem. 
\end{proof}

\section{Orthogonality properties}
\label{section:orthogonality}
\subsection{Orthogonal decomposition into newforms}
\label{section:orthogonaldecompositionintonewforms}
With Theorems \ref{thm:EcuspInTermsofEchichi} and  \ref{thm:EchichiInTermsofEa} in hand, we may now 
study the orthogonality properties of Eisenstein series attached to Dirichlet characters. 
Let $\mathcal{E}_{t,\psi}(N)$ be the finite-dimensional vector space defined by
\begin{equation*}
 \mathcal{E}_{t,\psi}(N) = \text{span} \{ E_{\a}(z, 1/2 + it, \psi): \a \text{ is singular for $\psi$}\},
\end{equation*}
and define a formal inner product $\langle , \rangle_{\text{Eis}}$ on this space
by
\begin{equation}
\label{eq:FormalInnerProductOrthogonalityEquation}
\frac{1}{4 \pi} \langle E_{\a}(\cdot, 1/2 + it, \psi), E_{\b}(\cdot, 1/2 + it, \psi) \rangle_{\text{Eis}} = \delta_{\a \b},
\end{equation}
extended bilinearly.  This inner product is natural to use since the spectral decomposition of $L^2(\Gamma_0(N), \psi)$ in terms of Eisenstein series attached to cusps (as in \cite[Propositions 4.1, 4.2]{DFI}) corresponds essentially to \eqref{eq:FormalInnerProductOrthogonalityEquation}; see Section \ref{section:SpectralDecomposition} for more discussion.

Perhaps it is worthy of explanation that the dimension of $\mathcal{E}_{t,\psi}(N)$ equals the number of singular cusps for $\psi$, except possibly for $t=0$, and therefore this inner product is well-defined.
The key is to study the Fourier expansion of $E_{\a}(z,1/2+it, \psi)$ at the various cusps.  One may easily show that if $c y^{1/2+it} + d y^{1/2-it} = c' y^{1/2+it} + d' y^{1/2-it}$ for infinitely many values of $y$, and $t \neq 0$, then $c=c'$ and $d=d'$. 
Now suppose that
\begin{equation*}
 E_{\a}(z,1/2+it,\psi) = \sum_{\b} c_{\a,\b} E_{\b}(z,1/2+it, \psi),
\end{equation*}
for some constants $c_{\a, \b}$.  Equating coefficients of $y^{1/2+it}$ in the Fourier expansions at the arbitrary cusp $\c$, we have
\begin{equation*}
\delta_{\a=\c} y^{1/2 + it} %+ \varphi_{\a, \c}(1/2+it) y^{1/2-it} 
= \sum_{\b} c_{\a, \b}  \delta_{\b = \c} y^{1/2+it} 
%+ \varphi_{\b, \c}(1/2+it) y^{1/2-it}) 
= c_{\a, \c} y^{1/2+it}.
\end{equation*}
Hence
%\begin{equation}
 $c_{\a, \c} = \delta_{\a = \c}$,
%\end{equation}
which precisely means that the Eisenstein series attached to cusps are linearly independent, for $t \neq 0$.  We should also observe that the constant terms of all Eisenstein series are analytic for $s=1/2+it$, except possibly at $t=0$, by inspection of \eqref{eq:EisensteinChiChiConstantTerm}.

\begin{mytheo}
\label{thm:DorthogonalBasis}
For $f|N$, $(f,N/f)|\frac{N}{N^*}$, $q_1 | \frac{N}{f}$, $q_2 | f$, and $\chi_i$ primitive modulo $q_i$ satisfying $\chi_1 \overline{\chi_2} \sim \psi$, let
 $D_{\chi_1, \chi_2, f}(z,s)$ be defined by \eqref{eq:DchiDef}.  Then the functions $D_{\chi_1, \chi_2, f}(z,s)$ form an orthogonal basis for $\mathcal{E}_{t,\psi}(N)$.
\end{mytheo}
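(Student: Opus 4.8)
The plan is to deduce everything from the finite Fourier structure already laid bare in the proof of Theorem~\ref{thm:EcuspInTermsofEchichi}. First I would observe that each $D_{\chi_1,\chi_2,f}(z,\tfrac12+it)$ does lie in $\mathcal{E}_{t,\psi}(N)$: the hypothesis $(f,N/f)\mid\frac{N}{N^*}$ forces every cusp $\frac{1}{uf}$ occurring in \eqref{eq:DchiDef} to be singular for $\psi$ by Lemma~\ref{lemma:singularcusps}, so $D_{\chi_1,\chi_2,f}$ is by construction a linear combination of Eisenstein series attached to singular cusps. Since $\dim\mathcal{E}_{t,\psi}(N)$ equals the number of singular cusps (for $t\neq0$ by the linear independence argument of Section~\ref{section:orthogonaldecompositionintonewforms}, and at $t=0$ by continuation), it then suffices to show that the $D_{\chi_1,\chi_2,f}$ are pairwise orthogonal and nonzero, and that their number equals the number of singular cusps.

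For the count, I would invoke the bijection isolated inside the proof of Theorem~\ref{thm:EcuspInTermsofEchichi}: for a fixed singular $f$, the assignment $(\chi_1,\chi_2)\mapsto\chi:=\chi_1^{(f_0)}\chi_2^{(N_0')}$ is a bijection from the admissible pairs (primitive $\chi_i$ modulo $q_i$ with $q_1\mid\frac{N}{f}$, $q_2\mid f$ and $\chi_1\overline{\chi_2}\sim\psi$) onto the primitive characters of modulus dividing $(f,N/f)$, of which there are $\varphi((f,N/f))$. Summing over the admissible $f$ and comparing with the Corollary to Lemma~\ref{lemma:singularcusps} identifies the number of triples $(f,\chi_1,\chi_2)$ with the number of singular cusps.

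The orthogonality is the crux. Using \eqref{eq:chi1IsWellDefined} I would rewrite, for admissible data and any $u$ coprime to $N$, $\chi_1(-u)=\psi^{(N_0')}(-u)\,\chi(-u)$, so that $D_{\chi_1,\chi_2,f}=\psi^{(N_0')}(-1)\,\chi(-1)\sumstar_{u\bmod(f,N/f)}\chi(u)\,g_{u,f}$, where $g_{u,f}:=\psi^{(N_0')}(u)\,E_{\frac{1}{uf}}(z,\tfrac12+it,\psi)$ is a genuine function of $u\bmod(f,N/f)$ (this is exactly the well-definedness recorded just before the proof of Theorem~\ref{thm:EchichiInTermsofEa}). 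Taking the $\frac{1}{uf}$ to be the canonical representatives of Section~\ref{section:KY}, the defining relation \eqref{eq:FormalInnerProductOrthogonalityEquation} yields $\langle g_{u,f},g_{u',f'}\rangle_{\text{Eis}}=4\pi\,\delta_{f=f'}\,\delta_{u=u'}$, since distinct such cusps are $\Gamma_0(N)$-inequivalent and the factor $\psi^{(N_0')}(u)\overline{\psi^{(N_0')}(u)}=1$ on the diagonal. Expanding $\langle D_{\chi_1,\chi_2,f},D_{\chi_1',\chi_2',f'}\rangle_{\text{Eis}}$ therefore collapses to $4\pi\,\delta_{f=f'}\,\chi(-1)\overline{\chi'(-1)}\sumstar_{u\bmod(f,N/f)}(\chi\overline{\chi'})(u)$, and orthogonality of Dirichlet characters evaluates the $u$-sum to $\varphi((f,N/f))$ when $\chi=\chi'$ (equivalently $(\chi_1,\chi_2)=(\chi_1',\chi_2')$, by the bijection) and to $0$ otherwise. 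Hence $\langle D_{\chi_1,\chi_2,f},D_{\chi_1',\chi_2',f'}\rangle_{\text{Eis}}=4\pi\,\varphi((f,N/f))$ when the two triples coincide and $0$ otherwise; in particular each $D$ is nonzero, so the family is an orthogonal set of the right cardinality inside $\mathcal{E}_{t,\psi}(N)$, hence an orthogonal basis.

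I expect the only real friction to be bookkeeping: confirming that $\chi=\chi_1^{(f_0)}\chi_2^{(N_0')}$ is indeed a primitive character of modulus dividing $(f,N/f)$ and that $(\chi_1,\chi_2)\leftrightarrow\chi$ is bijective (both carried out within the proof of Theorem~\ref{thm:EcuspInTermsofEchichi}), and checking that the choice of coset representatives $u$ in \eqref{eq:DchiDef} affects neither $D_{\chi_1,\chi_2,f}$ nor the pairing computation, which is precisely the well-definedness of $g_{u,f}$. There should be no analytic obstacle, as $\tfrac{1}{4\pi}\langle\cdot,\cdot\rangle_{\text{Eis}}$ is purely formal; the $t=0$ case of the dimension statement can be folded into the same remark used in Section~\ref{section:orthogonaldecompositionintonewforms}.
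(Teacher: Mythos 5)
Your argument is correct, and the orthogonality computation is exactly the paper's: the paper likewise combines \eqref{eq:DchiDef} with \eqref{eq:FormalInnerProductOrthogonalityEquation} to get $\frac{1}{4\pi}\langle D_{\chi_1,\chi_2,f_1},D_{\eta_1,\eta_2,f_2}\rangle=\delta_{f_1=f_2}\delta_{\chi_1=\eta_1}\delta_{\chi_2=\eta_2}\varphi((f,N/f))$, and your careful treatment of the well-definedness via $g_{u,f}=\psi^{(N_0')}(u)E_{\frac{1}{uf}}$ and the sign factors is just a more explicit version of that step. Where you genuinely diverge is in establishing completeness: the paper proves that the $D_{\chi_1,\chi_2,f}$ \emph{span} $\mathcal{E}_{t,\psi}(N)$ by deriving the explicit inversion formula \eqref{eq:EaintermsofDchiAlternate} (obtained by inserting \eqref{eq:Etwistedavgoveru} into \eqref{eq:EcuspInTermsofEchichi}), which expresses each $E_{\frac{1}{uf}}$ as a linear combination of $D$'s; you instead count the triples $(f,\chi_1,\chi_2)$ via the bijection $(\chi_1,\chi_2)\leftrightarrow\chi$ from the proof of Theorem \ref{thm:EcuspInTermsofEchichi} and match against the Corollary to Lemma \ref{lemma:singularcusps} and the dimension remark preceding the theorem. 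Both routes are sound. The paper's buys the inversion formula \eqref{eq:EaintermsofDchiAlternate} as a byproduct (it is used again in the summary remarks), and in fact the paper treats the bijection you rely on as a \emph{consequence} of the basis property obtained by comparing dimensions; your route instead must establish that bijection independently, which is legitimate since it is indeed proved within Theorem \ref{thm:EcuspInTermsofEchichi} (the claim $\sum_{\chi}\delta(\chi,\chi_1,\chi_2,\psi)=1$ together with the verification that each $\chi$ modulo $(f,N')$ produces an admissible pair). The only soft spot is the parenthetical ``at $t=0$ by continuation,'' which the paper itself also leaves aside (``except possibly for $t=0$''), so I would not count it against you.
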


\begin{proof}
These functions are defined by \eqref{eq:DchiDef}, but also may be given by \eqref{eq:Etwistedavgoveru}.  The formula \eqref{eq:EchichiInTermsofDchi} allows one to express $E_{\chi_1, \chi_2}$ in terms of $D$'s, while \eqref{eq:DchiDef} may be inverted by inserting \eqref{eq:Etwistedavgoveru} into \eqref{eq:EcuspInTermsofEchichi}, giving
\begin{equation}
\label{eq:EaintermsofDchiAlternate}
 E_{\frac{1}{uf}}(z,s,\psi) =
  \frac{1}{\varphi((f,N/f))} 
 \sum_{q_1 | \frac{N}{f}} \sum_{q_2 | f} 
 \sumstar_{\substack{\chi_1 \shortmod{q_1} \\ \chi_2 \shortmod{q_2} \\ \chi_1 \overline{\chi_2} \sim \psi}}
  \overline{\chi_1}(-u) D_{\chi_1, \chi_2, f}(z,s,\psi).
\end{equation}
This formula shows the functions $D_{\chi_1, \chi_2, f}$ form a spanning set for $\mathcal{E}_{t,\psi}(N)$.

To show these functions are orthogonal, we simply combine \eqref{eq:DchiDef} and \eqref{eq:FormalInnerProductOrthogonalityEquation}, giving
\begin{equation*}
\frac{1}{4 \pi} \langle D_{\chi_1, \chi_2, f_1}, D_{\eta_1, \eta_2, f_2} \rangle = \delta_{f_1 = f_2} \delta_{\chi_1 = \eta_1} \delta_{\chi_2 = \eta_2} \varphi((f, N/f)),
\end{equation*}
where we have let $f = f_1 = f_2$.  %Thus the collection of functions $D_{\chi_1, \chi_2, f}$ forms an orthogonal basis for $\mathcal{E}_{t,\psi}(N)$.  
In particular, for a fixed $f$ for which $(f,N/f)|\frac{N}{N^*}$  (equivalently, $N^*|[f,N/f]$), there is a bijection between $u \pmod{(f,N/f)}$, coprime to the modulus, and pairs of characters $\chi_1, \chi_2$ so that $\chi_1 \overline{\chi_2} \sim \psi$.  That is, the number of such pairs of characters equals $\varphi((f,N/f))$.  
\end{proof}

Next we turn to the orthogonality properties of $E_{\chi_1, \chi_2}$.
We deduce from \eqref{eq:EchichiInTermsofDchi} that $E_{\chi_1, \chi_2}(B_1 z, 1/2 + it)$ is orthogonal to $E_{\eta_1, \eta_2}(B_2 z, 1/2 + it)$ unless $\chi_1 = \eta_1$ and $\chi_2 = \eta_2$.  This shows that, for a given $\chi_1, \chi_2$, the set of functions $D_{\chi_1, \chi_2, q_2 g}(z,s)$ with $g|L$ forms an orthogonal set for the ``oldclass" formed from $E_{\chi_1, \chi_2}(z,s)$, that is, the subspace
\begin{equation*}
\mathcal{E}_{t,\psi}(L;E_{\chi_1, \chi_2}) := \text{span} \{ E_{\chi_1, \chi_2}(Bz,1/2+it): B | L \},
\end{equation*}
where $q_1 q_2 L = N$.
By dimension-counting, we see that $\{D_{\chi_1, \chi_2, q_2 g}(z,s):g|L \}$ then forms an orthogonal basis for this oldclass, and so the functions $E_{\chi_1, \chi_2}(Bz,1/2+it)$ also form a basis for this subspace (not in general orthogonal, however).

% Theorems \ref{thm:EcuspInTermsofEchichi} and \ref{thm:EchichiInTermsofEa} give an alternative proof of some of the material from Weisinger's thesis, in the non-holomorphic setting, as we now explicate.
 Summarizing, we have shown
\begin{equation}
\label{eq:EisensteinAtkinLehnerDecomposition}
 \mathcal{E}_{t,\psi}(N)
 = \bigoplus_{q_1 q_2 L =N} \thinspace \sideset{}{^*}\bigoplus_{\substack{\chi_1 \shortmod{q_1} \\ \chi_2 \shortmod{q_2} \\ \chi_1 \overline{\chi_2} \sim \psi}}  
 \mathcal{E}_{t,\psi}(L;E_{\chi_1,\chi_2}),
 %\text{span} 
 %\{E_{\chi_1, \chi_2}(Bz, 1/2 + it) : B | L\},
\end{equation}
where, as observed earlier in this section, this is an orthogonal decomposition. 
This is an extension of Weisinger's newform theory to the non-holomorphic setting.
 Following Weisinger, define an Eisenstein newform of level $M$ to be one of the  $E_{\chi_1, \chi_2}(z, 1/2+it)$, where $\chi_i$ is primitive modulo $q_i$, $i=1,2$, with $q_1 q_2 = M$.  Let $\mathcal{H}_{t,\psi}^*(M)$ denote the set of Eisenstein newforms of level $M$, nebentypus $\psi$, and spectral parameter $t$.  Then we may re-write \eqref{eq:EisensteinAtkinLehnerDecomposition} as
\begin{equation}
\label{eq:EisensteinAtkinLehnerDecomposition2}
 \mathcal{E}_{t,\psi}(N)
 = \bigoplus_{LM = N} \bigoplus_{F \in \mathcal{H}_{t,\psi}^*(M)} \mathcal{E}_{t,\psi}(L;F).
 %, \qquad \mathcal{E}_{t,\psi}(L;F) = \text{span}\{F(Bz): B | L\}.
\end{equation}

%If $\psi=1$ we will typically omit it from the notation.

Needless to say, the above decompositions completely parallel the decomposition of cuspidal newforms as in \cite{AtkinLehner} \cite{AtkinLi}, which gives
\begin{equation*}
 S_{t_j, \psi}(N) = \bigoplus_{LM = N} \bigoplus_{f \in H_{t_j,\psi}^*(M)} S_{t_j, \psi}(L;f),
\end{equation*}
where $S_{t_j,\psi}(N)$ is the (finite-dimensional) space of cusp forms with spectral parameter $t_j$ and nebentypus $\psi$,  $H_{t_j,\psi}^*(M)$ is the set of newforms of level $M$ with spectral parameter $t_j$, and $S_{t_j,\psi}(L;f)= \text{span}\{f(\ell z): \ell | L \}$.

\subsection{Summary remarks}
For clarity, we summarize the statements of the change-of-basis formulas with some alternative notation.  We take this opportunity to make explicit certain facts that were perhaps only implicit within the proofs.

Let $\psi$ be a Dirichlet character modulo $N$, of conductor $N_{\psi}$.  The cusp $\frac{1}{uf} \sim \frac{u}{f}$ is singular with respect to $\psi$ iff 
$ (f,N/f) | \frac{N}{N_{\psi}}$,
or alternatively, $N_{\psi} | [f,N/f]$.
There are $\varphi((f,N/f))$ inequivalent (singular) cusps $u/f$ with denominator $f$.  Moreover, there exists a bijection between these cusps, and pairs of characters $(\chi_1, \chi_2)$ with $q_1 | \frac{N}{f}$, $q_2 | f$, $\chi_i$ primitive modulo $q_i$, $i=1,2$, and $\chi_1 \overline{\chi_2} \sim \psi$.  
This bijection was observed within the proof of Theorem \ref{thm:DorthogonalBasis} by comparing dimensions, but can be seen directly as follows.  The parts of $\chi_1, \chi_2$ of moduli away from $(f,N/f)$ are uniquely determined by the equation $\chi_1 \overline{\chi_2} \sim \psi$ (and there will exist at least one such pair of characters, since $N_{\psi} | [f,N/f]$).  After that, we are free to multiply both $\chi_1$ and $\chi_2$ by the same Dirichlet character modulo $(f,N/f)$. 
Let $\Psi_f$ denote the set of pairs of such characters, so $|\Psi_f| = \varphi((f,N/f))$.   

For $(\chi_1, \chi_2) \in \Psi_f$, we may define $D_{\chi_1,\chi_2,f}(z,s,\psi)$ by \eqref{eq:DchiDef}. %\eqref{eq:DchiInTermsofEa}.
% \begin{equation}
% \label{eq:DchiInTermsofEa}
%  D_{\chi_1, \chi_2, f}(z,s,\psi) = \sumstar_{u \shortmod{(f,N/f)}} \chi_1(-u) E_{\frac{1}{uf}}(z,s,\psi).
% \end{equation}
This formula is inverted by \eqref{eq:EaintermsofDchiAlternate}, which in the new notation reads
\begin{equation}
\label{eq:EaInTermsofDchi}
 E_{\frac{1}{uf}}(z,s,\psi)
 = \frac{1}{\varphi((f,N/f))} \sum_{(\chi_1, \chi_2) \in \Psi_f} \overline{\chi_1}(-u) D_{\chi_1, \chi_2, f}(z,s,\psi).
\end{equation}

One may wish to focus on the pair of characters themselves intrinsically, and to forget about the ambient $f$.  
Suppose that $q_1 q_2 | N$, say $N = q_1 q_2 L$,  $\chi_i$ is primitive of modulus $q_i$, $i=1,2$, and $\chi_1 \overline{\chi_2} \sim \psi$.  We claim that $(\chi_1, \chi_2) \in \Psi_f$ if and only if $f = q_2 g$ with $g | L$.  This is easy to check, because the condition $q_2 | f$ means that $f = q_2 g$ for some $g$, and since $\frac{N}{f}  = \frac{q_1 L}{ g} $,
the condition $q_1 | \frac{N}{f}$ means $g | L$.  In particular, there always exists such an $f$ so that $(\chi_1, \chi_2) \in \Psi_f$.  Moreover, the same character pair $(\chi_1, \chi_2)$ lies in $\tau(L)$ sets $\Psi_{f}$.

Now suppose that $(\chi_1, \chi_2) \in \Phi_{q_2 g}$ with $g |L$.  
Then \eqref{eq:Etwistedavgoveru} becomes
\begin{equation*}
%\label{eq:DchichiInTermsofEchi}
 D_{\chi_1, \chi_2, q_2 g}(z,s,\psi)
 =
 \frac{(q_2 g, q_1 \frac{ L}{g})^s}{N^s} 
 \frac{L(2s,\chi_1 \chi_2)}{L(2s,\chi_1 \chi_2 \chi_{0,N})}
 \sum_{\substack{a|g}}  \sum_{\substack{b | \frac{L}{g} }}
 \frac{\mu(a) \mu(b)}{(ab)^{s}}
  \chi_1(b) \chi_2(a)  
 E_{\chi_1,\chi_2}\Big(\frac{bg}{a } z, s\Big),
\end{equation*}
which is inverted by \eqref{eq:EchichiInTermsofDchi}.
% \begin{equation}
% \label{eq:EchichiInTermsofDchi}
%  E_{\chi_1, \chi_2}(Bz, s) =  N^s \mathop{\sum_{d|A} \sum_{e|B}}_{(d,e)=1} \frac{\chi_1(d) \chi_2(e)}{(de)^s} 
% \frac{1}{(q_2 \frac{Bd}{e}, q_1 \frac{Ae}{d})^s} 
%  D_{\chi_1, \chi_2, q_2 \frac{Bd}{e}}(z,s,\psi),
% \end{equation}
% where $L = AB$.

The $D$-functions are useful because they may be naturally parameterized either by $f$ and $\Psi_f$ or alternatively by the (intrinsic) pairs of characters, along with $g|L$.  Hence, they give a natural intermediate basis between the $E_{\frac{1}{uf}}$ and the $E_{\chi_1, \chi_2}$.

\subsection{Remarks on the spectral decomposition}
\label{section:SpectralDecomposition}
The continuous part of the spectral decomposition, as in \cite[Proposition 4.1]{DFI}, for instance, takes the form
\begin{equation}
\label{eq:SpectralDecompositionContinuousPart}
f_{\text{Eis}}(z) :=  \intR \sum_{\mathfrak{a}} \frac{1}{4 \pi} \langle f, E_{\mathfrak{a}} \rangle E_{\mathfrak{a}}(z, 1/2 + it, \psi) dt,
\end{equation}
where $f \in L^2(\Gamma_0(N), \psi)$.  It is desirable to express this formula in terms of an alternative basis, as in, for instance, Section \ref{section:orthogonaldecompositionintonewforms}, without having to go through the analytic aspects of the spectral decomposition.
\begin{myprop}
\label{prop:SpectralFormulasChangeOfBasis}
 Let $\mathcal{B}(t,\psi,N)$ denote an orthogonal basis for $\mathcal{E}_{t,\psi}(N)$, and suppose $f, g \in L^2(\Gamma_0(N), \psi)$.  Then
 \begin{equation}
 \label{eq:SpectralFormulasGeneralBasis}
  \sum_{F \in \mathcal{B}(t,\psi,N)} \frac{\langle f, F \rangle}{\langle F, F \rangle_{\text{Eis}}} F(z),
  \quad
  \text{and}
  \quad
  \sum_{F \in \mathcal{B}(t,\psi,N)} \frac{\langle f, F \rangle \langle F, g \rangle}{\langle F, F \rangle_{\text{Eis}}} 
 \end{equation}
are independent of the choice of basis.
 \end{myprop}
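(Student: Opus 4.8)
The plan is to reduce the statement to a routine piece of linear algebra on the finite‑dimensional space $V := \mathcal{E}_{t,\psi}(N)$. First I would record the structural facts already in hand: for $t \neq 0$ the Eisenstein series $\{E_{\mathfrak{a}}(\cdot,1/2+it,\psi) : \mathfrak{a}\text{ singular}\}$ are linearly independent (this is the $y^{1/2+it}$‑coefficient argument given just before Theorem \ref{thm:DorthogonalBasis}), hence form a basis of $V$, and then \eqref{eq:FormalInnerProductOrthogonalityEquation} defines $\langle\,,\,\rangle_{\text{Eis}}$ as a genuine nondegenerate (indeed positive‑definite Hermitian) form on $V$; the remaining case $t=0$ follows by analytic continuation in $t$, since both expressions in \eqref{eq:SpectralFormulasGeneralBasis} are meromorphic in $t$. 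For $f,g \in L^2(\Gamma_0(N),\psi)$ the maps $F \mapsto \langle f, F\rangle$ and $F \mapsto \langle F, g\rangle$, interpreted as in the continuous spectral expansion \eqref{eq:SpectralDecompositionContinuousPart}, are respectively a conjugate‑linear and a linear functional on $V$, and on $V\times V$ the pairing agrees with $\langle\,,\,\rangle_{\text{Eis}}$.

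Next I would invoke Riesz representation, available because $\langle\,,\,\rangle_{\text{Eis}}$ is nondegenerate on $V$: there exist unique vectors $v_f, w_g \in V$ with $\langle f, F\rangle = \langle v_f, F\rangle_{\text{Eis}}$ and $\langle F, g\rangle = \langle F, w_g\rangle_{\text{Eis}}$ for all $F \in V$, and $v_f,w_g$ depend only on $f,g$ (and $t,\psi,N$), not on any basis. Let $\mathcal{B}(t,\psi,N) = \{F_i\}$ be any orthogonal basis. The standard expansion of a vector of $V$ in an orthogonal basis gives
\[
 v_f \;=\; \sum_i \frac{\langle v_f, F_i\rangle_{\text{Eis}}}{\langle F_i, F_i\rangle_{\text{Eis}}}\,F_i \;=\; \sum_i \frac{\langle f, F_i\rangle}{\langle F_i, F_i\rangle_{\text{Eis}}}\,F_i ,
\]
which is precisely the first sum in \eqref{eq:SpectralFormulasGeneralBasis}; hence that sum equals $v_f$ and is independent of the basis. (This also identifies it with the $t$‑slice of $f_{\text{Eis}}$ in \eqref{eq:SpectralDecompositionContinuousPart}: it is the orthogonal projection of $f$ onto $V$.) For the second sum I would pair this same expansion of $v_f$ against $w_g$, using linearity of $\langle\,\cdot\,, w_g\rangle_{\text{Eis}}$:
\[
 \sum_i \frac{\langle f, F_i\rangle\,\langle F_i, g\rangle}{\langle F_i, F_i\rangle_{\text{Eis}}} \;=\; \sum_i \frac{\langle v_f, F_i\rangle_{\text{Eis}}\,\langle F_i, w_g\rangle_{\text{Eis}}}{\langle F_i, F_i\rangle_{\text{Eis}}} \;=\; \Big\langle \sum_i \tfrac{\langle v_f, F_i\rangle_{\text{Eis}}}{\langle F_i, F_i\rangle_{\text{Eis}}}\,F_i,\; w_g\Big\rangle_{\text{Eis}} \;=\; \langle v_f, w_g\rangle_{\text{Eis}},
\]
again independent of the choice of $\mathcal{B}(t,\psi,N)$.

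The only genuinely delicate point is the meaning of $\langle f, F\rangle$ when $F$ is an Eisenstein series $E_{\mathfrak{a}}(\cdot,1/2+it,\psi)$, which is not square‑integrable: this is the regularized pairing underlying the spectral decomposition \eqref{eq:SpectralDecompositionContinuousPart} (coming from the Maass–Selberg relations / unfolding), and the two properties I need of it — (conjugate‑)linearity in the Eisenstein argument, and agreement with $\langle\,,\,\rangle_{\text{Eis}}$ on $V\times V$ — are exactly what makes \eqref{eq:FormalInnerProductOrthogonalityEquation} the natural normalization; I would cite \cite[Section~4]{DFI} for this and treat it as input. With that granted, the rest is the elementary Hilbert‑space computation above, so beyond bookkeeping of conjugations and the $t=0$ continuity remark there is no serious obstacle.
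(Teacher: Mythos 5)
Your proof is correct and follows essentially the same route as the paper's: both reduce the statement to finite-dimensional linear algebra on $V=\mathcal{E}_{t,\psi}(N)$, using only that $F\mapsto\langle f,F\rangle$ is sesquilinear and that $\langle \cdot,\cdot\rangle_{\text{Eis}}$ is a nondegenerate inner product on $V$. The only cosmetic difference is that you package the argument via Riesz representatives $v_f, w_g$, whereas the paper verifies basis-independence directly by computing $\sum_F \overline{c_{F,G}}\,c_{F,G'}/\langle F,F\rangle_{\text{Eis}}=\delta_{G=G'}/\langle G,G\rangle_{\text{Eis}}$ for the change-of-basis coefficients $c_{F,G}=\langle F,G\rangle_{\text{Eis}}/\langle G,G\rangle_{\text{Eis}}$; your explicit flagging of the regularized meaning of $\langle f,F\rangle$ corresponds to the paper's remark that the numerator and denominator pairings are different.
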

Note that with $F = E_{\a}$, we have $\langle F, F \rangle_{\text{Eis}} = 4 \pi$, and the first expression in \eqref{eq:SpectralFormulasGeneralBasis} agrees with the integrand in \eqref{eq:SpectralDecompositionContinuousPart}.  Likewise, the second formula in \eqref{eq:SpectralFormulasGeneralBasis} is the continous spectrum part of $\langle f, g \rangle$ in Parseval's formula.
These formulas are not quite the standard formulas for the projection of a vector $f$ onto a finite-dimensional inner product space, and $\langle f, g \rangle$, respectively, because the inner products in the numerators are different from the inner products in the denominators.  Nevertheless, the formulas follow from standard linear algebra calculations.
\begin{proof}
 Let $G$ run over an alternative basis, say $\mathcal{B}'(t,\psi,N)$, and define the change of basis coefficients by $F = \sum_{G} c_{F,G} G$, 
 where
 $c_{F,G} = \frac{\langle F, G \rangle_{\text{Eis}}}{\langle G, G \rangle_{\text{Eis}}}$.  Note that, if $G,G' \in\mathcal{B}'(t,\psi,N)$, then
 \begin{equation}
 \label{eq:cFGsumFormula}
  \sum_{F} \frac{\overline{c_{F,G}} c_{F, G'}}{\langle F, F \rangle_{\text{Eis}}} 
  = 
  \sum_{F} \frac{\langle G, F \rangle_{\text{Eis}} \langle F, G' \rangle_{\text{Eis}} }{\langle G, G \rangle_{\text{Eis}} \langle G', G' \rangle_{\text{Eis}} \langle F, F \rangle_{\text{Eis}}} 
  = \frac{\langle G, G' \rangle_{\text{Eis}}}{\langle G, G \rangle_{\text{Eis}}\langle G', G' \rangle_{\text{Eis}}}
  =  \frac{\delta_{G=G'}}{\langle G, G \rangle_{\text{Eis}}}.
 \end{equation}
Applying \eqref{eq:cFGsumFormula}, we have
\begin{equation}
 \sum_{F} \frac{\langle f, F \rangle}{\langle F, F \rangle_{\text{Eis}}} F 
 =
 \sum_{F} \sum_{G}  \frac{\langle f, G \rangle}{\langle F, F \rangle_{\text{Eis}}} \overline{c_{F,G}} \sum_{G'} c_{F,G'} G' =
 \sum_{G} \frac{\langle f, G \rangle}{\langle G, G \rangle_{\text{Eis}}} G,
\end{equation}
showing that the first formula in \eqref{eq:SpectralFormulasGeneralBasis} is independent of basis.
A nearly-identical proof works for the second formula in \eqref{eq:SpectralFormulasGeneralBasis}.
\end{proof}

\subsection{An inner product calculation}
\label{section:InnerProduct}
Let $M= q_1 q_2$, $N = ML$, and let $\chi_i$ be primitive modulo $q_i$.  We wish to evaluate 
\begin{equation}
\label{eq:Ichi1chi2B1B2Definition}
I_{\chi_1,\chi_2}(B_1,B_2;N):= \frac{1}{4\pi} \langle E_{\chi_1, \chi_2}(B_1 z,1/2+it), E_{\chi_1, \chi_2}(B_2 z,1/2+it) \rangle_N, 
\end{equation}
where $B_1, B_2 | L$, and the inner product is on Eisenstein series of level $N$.
%More precisely, we wish to evaluate the ratio $\frac{I_{\chi_1,\chi_2}(B_1, B_2;N)}{I_{\chi_1,\chi_2}(1,1;N)}$.  

The motivation to evaluate this inner product is to unify it with a corresponding formula for cuspidal newforms, for which see \cite[p.473]{BlomerMilicevic} (for principal nebentypus)
and \cite[Lemma 3.13]{Humphries} (for arbitrary nebentypus).  Schulze-Pillot and Yenirce \cite{SchulzePillotYenirce} have also derived the analogous formula for holomorphic newforms of arbitrary level and nebentypus, using only Hecke theory.
This is desirable in order to find orthonormal bases for the oldclasses $S_{t_j, \psi}(L;f)$ and $\mathcal{E}_{t,\psi}(L;F)$
that are constructed from the newforms in identical ways, which is useful to treat the discrete spectrum and the continuous spectrum on an equal footing.
In Section \ref{section:KuznetsovNewforms} below, we illustrate this idea by proving a Bruggeman-Kuznetsov formula for newforms of squarefree level and trivial nebentypus (these restrictions on the level and nebentypus arise from the assumptions in place in \cite{PetrowYoung}).

\begin{mylemma}
\label{lemma:InnerProductEvaluation}
 Let notation be as above.  Then
 \begin{equation}
 \label{eq:Ichi1chi2InnerProductinTermsofA}
  \frac{I_{\chi_1,\chi_2}(B_1,B_2;N)}{I_{\chi_1,\chi_2}(1,1;N)}
  = A_{\chi_1, \chi_2}\Big(\frac{B_2}{(B_1,B_2)} \Big)
  \overline{A_{\chi_1, \chi_2}}\Big(\frac{B_1}{(B_1,B_2)} \Big),
 \end{equation}
 where $A_{\chi_1,\chi_2}(n)$ is the multiplicative function defined for $B \geq 1$ by
 \begin{equation}
 \label{eq:Achi1chi2PrimePowerFormula}
  A_{\chi_1,\chi_2}(p^B) = \frac{\lambda_{\chi_1,\chi_2}(p^B) - \chi_1 \overline{\chi_2}(p)  p^{-1} \lambda_{\chi_1,\chi_2}(p^{B-2})}{p^{B/2} (1 + \chi_0(p) p^{-1})}.
 \end{equation}
 Here $\lambda_{\chi_1,\chi_2}(n)$ is shorthand for $\lambda_{\chi_1, \chi_2}(n, 1/2+it)$ originally defined by \eqref{eq:lambdachi1chi2Def}, and where for $B=1$ we define $\lambda_{\chi_1, \chi_2}(p^{-1}) = 0$.  Moreover, $\chi_0$ is the principal character modulo $q_1 q_2$.
\end{mylemma}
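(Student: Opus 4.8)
\emph{Plan.} The approach is to compute the inner product in the orthogonal basis $\{D_{\chi_1,\chi_2,f}\}$ supplied by Theorem~\ref{thm:DorthogonalBasis}. First I would expand each of $E_{\chi_1,\chi_2}(B_1 z,1/2+it)$ and $E_{\chi_1,\chi_2}(B_2 z,1/2+it)$ in the orthogonal basis $\{D_{\chi_1,\chi_2,q_2 g}(z,1/2+it,\psi): g\mid L\}$ of the oldclass $\mathcal{E}_{t,\psi}(L;E_{\chi_1,\chi_2})$, using the change-of-basis formula \eqref{eq:EchichiInTermsofDchi}. Reindexing that sum by $g=Bd/e$ (with $A=L/B$, so that $q_1 Ae/d=q_1 L/g$), the coefficient of $D_{\chi_1,\chi_2,q_2 g}$ in $E_{\chi_1,\chi_2}(Bz,1/2+it)$ is
\[
 c_B(g)=\frac{N^{1/2+it}}{(q_2 g,\,q_1 L/g)^{1/2+it}}\sum_{\substack{d\mid L/B,\ e\mid B\\ (d,e)=1,\ Bd/e=g}}\frac{\chi_1(d)\chi_2(e)}{(de)^{1/2+it}}.
\]
Combining this with the orthogonality relation $\tfrac{1}{4\pi}\langle D_{\chi_1,\chi_2,q_2 g},D_{\chi_1,\chi_2,q_2 g}\rangle=\varphi\big((q_2 g,\,q_1 L/g)\big)$ derived inside the proof of Theorem~\ref{thm:DorthogonalBasis}, I obtain
\[
 I_{\chi_1,\chi_2}(B_1,B_2;N)=\sum_{g\mid L}\varphi\big((q_2 g,\,q_1 L/g)\big)\,c_{B_1}(g)\,\overline{c_{B_2}(g)},
\]
which turns the lemma into an explicit finite arithmetic identity, the point being that all dependence on $N$, $q_1$, $q_2$ must cancel in the ratio $I_{\chi_1,\chi_2}(B_1,B_2;N)/I_{\chi_1,\chi_2}(1,1;N)$.

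Next I would reduce to prime powers. Since $\chi_1,\chi_2$ are completely multiplicative and $(q_2 g,\,q_1 L/g)$ factors over primes, each $c_B(g)$ is a product of local factors, so $I_{\chi_1,\chi_2}(B_1,B_2;N)=\prod_{p\mid N}I_p(v_p(B_1),v_p(B_2))$, while $A_{\chi_1,\chi_2}$ is multiplicative by construction. Writing $s=1/2+it$ and using $s+\overline{s}=1$ to kill the modulus of $N^s$, one is left with the clean local shape
\[
 I_p(b_1,b_2)=p^{\,v_p(N)}\sum_{c=0}^{\ell}\varphi(p^{\mu_c})\,p^{-\mu_c}\,\rho_{b_1}(c)\,\overline{\rho_{b_2}(c)},\qquad \mu_c=\min(\alpha_2+c,\ \alpha_1+\ell-c),
\]
where $\ell=v_p(L)$, $\alpha_i=v_p(q_i)$, and $\rho_b(c)$ is the local divisor sum inside $c_B(g)$: explicitly $\rho_b(c)=\chi_1(p^{\,c-b})\,p^{-s(c-b)}$ for $b\le c\le\ell$ and $\rho_b(c)=\chi_2(p^{\,b-c})\,p^{-s(b-c)}$ for $0\le c\le b$, the two formulas agreeing at $c=b$. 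The lemma is then equivalent to the purely local identity
\[
 \frac{I_p(b_1,b_2)}{I_p(0,0)}=A_{\chi_1,\chi_2}(p^{\,b_2-m})\,\overline{A_{\chi_1,\chi_2}(p^{\,b_1-m})},\qquad m=\min(b_1,b_2),
\]
in which $p^{v_p(N)}$ and the dependence of the weights $\varphi(p^{\mu_c})p^{-\mu_c}$ on $\alpha_1,\alpha_2,\ell$ must all cancel.

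Finally I would prove this local identity by evaluating the finite sums directly, treating separately the four cases $p\nmid q_1 q_2$, $p\mid q_1$ only, $p\mid q_2$ only, and $p\mid(q_1,q_2)$, inserting in each case the explicit value of $\lambda_{\chi_1,\chi_2}(p^{\,j},1/2+it)$ read off from \eqref{eq:lambdachi1chi2Def} (a geometric-type sum when $p\nmid q_1 q_2$; a single monomial when $p\mid q_1$ or $p\mid q_2$; and $0$ for $j\ge 1$ when $p\mid(q_1,q_2)$). Using $\rho_{b-1}(c)=\rho_b(c+1)$, one finds $I_p(b_1,b_2)$ is almost shift-invariant in $(b_1,b_2)$, the only discrepancy coming from the boundary weight $w_0$ versus $w_1$ when $p\nmid q_2$, which is precisely the source of the correction term $\chi_1\overline{\chi_2}(p)p^{-1}\lambda_{\chi_1,\chi_2}(p^{B-2})$ and of the denominator $p^{B/2}(1+\chi_0(p)p^{-1})$ in \eqref{eq:Achi1chi2PrimePowerFormula}; so it is convenient to first settle the case $(B_1,B_2)=1$ and then track this boundary correction in general. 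I expect the main obstacle to be exactly this local bookkeeping: keeping the coprimality constraint $(d,e)=1$ aligned with the $\min$ in $\mu_c$, and then verifying that the level- and conductor-dependent factors $p^{v_p(N)}$ and $\varphi(p^{\mu_c})p^{-\mu_c}$ cancel in the ratio to leave the claimed answer built only from the Hecke eigenvalues. (One could alternatively attempt a regularized Rankin--Selberg unfolding against the Fourier expansion of Proposition~\ref{prop:FourierExpansion}, but the $D$-basis route stays entirely within the linear-algebra framework already set up.) Since the computation parallels the cuspidal newform case, I would sanity-check the output against \cite{BlomerMilicevic}, \cite{Humphries}, and \cite{SchulzePillotYenirce}.
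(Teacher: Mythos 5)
Your plan is essentially the paper's own proof: the paper likewise expands $E_{\chi_1,\chi_2}(B_iz,1/2+it)$ via Theorem \ref{thm:EchichiInTermsofEa} in the orthogonal $E_{\a}$/$D$-basis, arrives at exactly your bilinear sum $N\sum_{R|L}\frac{\varphi((q_2R,q_1L/R))}{(q_2R,q_1L/R)}G(B_1,R)\overline{G(B_2,R)}$, factors it over primes, and settles the resulting local identity by the same case analysis on $p\mid q_1$, $p\mid q_2$, establishing the shift-invariance in $\min(B_1,B_2)$ and the value at $(1,p^B)$ that yield \eqref{eq:Achi1chi2PrimePowerFormula}. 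No substantive difference in route; the remaining work in your outline is the local bookkeeping the paper carries out in \eqref{eq:IpBigExpression}--\eqref{eq:IpEvaluationB1lessthanB2}.
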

The form of \eqref{eq:Achi1chi2PrimePowerFormula} is in perfect accord with the cuspidal case of \cite[Lemma 3.13]{Humphries}.  

The method of Blomer and Mili\'cevi\'c proceeds by unfolding and Rankin-Selberg theory; this method may not be used for Eisenstein series due to the lack of convergence.  As a substitute, we use the change-of-basis formulas and orthogonality of $E_{\a}$'s.  

\begin{proof}
%First, we observe from the definition that 
% \begin{equation}
% \label{eq:Ichi1chi2GlobalConjugationSymmetry}
% \overline{I_{\chi_1,\chi_2}(B_1,B_2;N)} = I_{\chi_1, \chi_2}(B_2, B_1;N).
% \end{equation}
 
Write $A_i = L/B_i$, $i=1,2$.  Then from Theorem \ref{thm:EchichiInTermsofEa}, we have
\begin{equation*}
I_{\chi_1,\chi_2}(B_1,B_2;N) 
= 
N \mathop{\mathop{\sum_{d_1|A_1} \sum_{e_1|B_1}}_{(d_1,e_1)=1}
\mathop{\sum_{d_2|A_2} \sum_{e_2|B_2}}_{(d_2,e_2)=1} 
}_{\frac{B_1 d_1}{e_1} = \frac{B_2 d_2}{e_2}}
\frac{\chi_1(d_1 \overline{d_2}) \chi_2(e_1 \overline{e_2})}{(d_1 e_1 d_2 e_2)^{1/2}} \Big(\frac{d_2 e_2}{d_1 e_1} \Big)^{it} 
\frac{\varphi((q_2 \frac{B_1 d_1}{e_1}, q_1 \frac{A_1 e_1}{d_1}))}{(q_2 \frac{B_1 d_1}{e_1}, q_1 \frac{A_1 e_1}{d_1})^{}}.
\end{equation*}
Parameterizing by the value of $\frac{B_1 d_1}{e_1}$, we have
\begin{equation}
\label{eq:Ichi1chi2B1B2multiplicativeExpression}
 I_{\chi_1,\chi_2}(B_1,B_2;N) 
= 
N
\sum_{R|L} 
\frac{\varphi((q_2 R, q_1 \frac{L}{R}))}{(q_2 R, q_1 \frac{L}{R})}
G(B_1,R) \overline{G_2(B_2,R)},
\end{equation}
where
\begin{equation*}
 G(B_i,R) = \mathop{\sum_{d | A_i} \sum_{e | B_i}}_{\substack{(d,e) = 1 \\ \frac{B_i d}{e} = R}} 
 \frac{\chi_1(d) \chi_2(e)}{(de)^{1/2+it}}.
\end{equation*}
Based on the multiplicative structure of \eqref{eq:Ichi1chi2B1B2multiplicativeExpression}, we may write
\begin{equation*}
 I_{\chi_1,\chi_2}(B_1, B_2;N) = N \prod_{p|L} I^{(p)}_{\chi_1,\chi_2}(p^{\nu_p(B_1)}, p^{\nu_p(B_2)}),
\end{equation*}
say.  
By abuse of notation, we replace $\nu_p(B_i)$ by $B_i$, and focus on a single prime $p$.
We have
\begin{equation}
\label{eq:GipR}
 G(p^{B_i},p^R) =  \mathop{\sum_{0 \leq d \leq A_i} \sum_{0 \leq e \leq B_i}}_{\substack{(p^d,p^e) = 1 \\ B_i+d-e = R}} 
 \frac{\chi_1(p^d) \chi_2(p^e)}{p^{(d+e)(1/2+it)}},
\end{equation}
and 
\begin{equation}
\label{eq:IpDefinition}
I_{\chi_1,\chi_2}^{(p)}(p^{B_1},p^{B_2})  = \sum_{0 \leq R \leq L} 
\frac{\varphi((p^{q_2+ R}, p^{q_1 +L-R}))}{(p^{q_2+ R}, p^{q_1 +L-R})}
G(p^{B_1},p^{R}) \overline{G(p^{B_2}, p^{R})},
\end{equation}
where again we have replaced $\nu_p(L)$ by $L$, and similarly for $q_1,q_2$.

It is easy to see that
\begin{equation}
\label{eq:Ichi1chi2ConjugationSymmetry}
 \overline{I_{\chi_1,\chi_2}^{(p)}(p^{B_1}, p^{B_2})} = I_{\chi_1,\chi_2}^{(p)}(p^{B_2}, p^{B_1}),
\end{equation}
%which is a local reflection of the ''global`` version $\overline{I_{\chi_1, \chi_2}(B_1, B_2;N)} = I_{\chi_1, chi_2}(B_2, B_1;N)$ which follows from the definition. %\eqref{eq:Ichi1chi2GlobalConjugationSymmetry}.  
and one may also easily verify
\begin{equation}
\label{eq:Ichi1chi2CharacterSwitchingSymmetry}
 I_{\chi_1,\chi_2}^{(p)}(p^{B_1}, p^{B_2}) = I_{\chi_2,\chi_1}^{(p)}(p^{L-B_1}, p^{L-B_2}).
\end{equation}

To prove Lemma \ref{lemma:InnerProductEvaluation}, we need three key facts.  First, we claim that
$I^{(p)}_{\chi_1,\chi_2}(p^{B_1}, p^{B_2})$ is unchanged under the replacements $B_i \rightarrow B_i - \min(B_1, B_2)$.  In other words, if we let $B_i = (B_1, B_2) B_i'$, then 
\begin{equation}
\label{eq:Ichi1chi2B1B2B1'B2'}
I_{\chi_1, \chi_2}(B_1,B_2;N) = I_{\chi_1, \chi_2}(B_1', B_2';N). 
\end{equation}
 This matches a corresponding formula for cusp forms (see \cite[p.473]{BlomerMilicevic}).  Secondly, we claim that for $B \geq 1$, we have
\begin{equation}
\label{eq:IpPrimePower}
I^{(p)}_{\chi_1, \chi_2}(1,p^B) = \frac{\lambda_{\chi_1,\chi_2}(p^B)}{p^{\frac{B}{2}}} - \psi(p) \frac{ \lambda_{\chi_1,\chi_2}(p^{B-2})}{p^{\frac{B+2}{2}}},
\end{equation}
where $\psi = \chi_1 \overline{\chi_2}$ is the nebentypus of $E_{\chi_1, \chi_2}$.  Finally, we claim
\begin{equation}
\label{eq:IpNoBs}
I^{(p)}_{\chi_1, \chi_2}(1,1) = 
\begin{cases}
(1-p^{-1}), \qquad &p|(q_1, q_2) \\
1, \qquad &p|q_1, p \nmid q_2 \\
1, \qquad &p|q_2, p \nmid q_1 \\
(1+p^{-1}), \qquad &p \nmid q_1 q_2.
\end{cases}
\end{equation}
 Taking these three facts for granted momentarily, we finish the proof.  The only apparent discrepancy is that if $p|(q_1, q_2)$, then the denominator in \eqref{eq:Achi1chi2PrimePowerFormula} does not seem to agree with \eqref{eq:IpNoBs} when $p|(q_1, q_2)$.  However, in this case, $\lambda(p^B) = 0$, so there is agreement after all.
 
All three facts follow from a more careful evaluation of $I_{\chi_1, \chi_2}^{(p)}(p^{B_1}, p^{B_2})$.
 If $B_i \leq R$ then within \eqref{eq:GipR} this means $e=0$ and $d = R-B_i$, while if $B_i \geq R$ then this means $d=0$ and $e= B_i-R$.  
Hence, 
\begin{multline}
\label{eq:IpBigExpression}
 I_{\chi_1,\chi_2}^{(p)}(p^{B_1},p^{B_2}) 
= 
\sum_{0 \leq R \leq \min(B_1, B_2)} 
\frac{\varphi((p^{q_2+ R}, p^{q_1 +L-R}))}{(p^{q_2+ R}, p^{q_1 +L-R})} 
\frac{\chi_2(p^{B_1-R}) \overline{\chi_2}(p^{B_2-R})}{p^{(B_1-R)(1/2+it) + (B_2-R)(1/2-it)}}
\\
+
\sum_{B_1 < R \leq B_2} \frac{\varphi((p^{q_2+ R}, p^{q_1 +L-R}))}{(p^{q_2+ R}, p^{q_1 +L-R})}
\frac{\chi_1(p^{R-B_1}) \overline{\chi_2}(p^{B_2-R})}{p^{(R-B_1)(1/2+it) + (B_2-R)(1/2-it)}} 
\\
+
\sum_{B_2 < R \leq B_1} \frac{\varphi((p^{q_2+ R}, p^{q_1 +L-R}))}{(p^{q_2+ R}, p^{q_1 +L-R})}
\frac{\chi_2(p^{B_1-R}) \overline{\chi_1}(p^{R-B_2})}{p^{(B_1-R)(1/2+it) + (R-B_2)(1/2-it)}} 
\\
+
\sum_{\max(B_1, B_2) < R \leq L} 
\frac{\varphi((p^{q_2+ R}, p^{q_1 +L-R}))}{(p^{q_2+ R}, p^{q_1 +L-R})} 
\frac{\chi_1(p^{R-B_1}) \overline{\chi_1}(p^{R- B_2})}{p^{(R-B_1)(1/2+it) + (R-B_2)(1/2-it)}}
.
\end{multline}
Of course, at least one of the two middle terms above is an empty sum.
% 
% 
% We continue the calculation.  One goal is to recognize that $I^{(p)}_{\chi_1,\chi_2}(p^{B_1}, p^{B_2})$ is unchanged under the replacements $B_i \rightarrow B_i - \min(B_1, B_2)$.  In other words, if we let $B_i = (B_1, B_2) B_i'$, then 
% \begin{equation}
% \label{eq:Ichi1chi2B1B2B1'B2'}
% I_{\chi_1, \chi_2}(B_1,B_2;N) = I_{\chi_1, \chi_2}(B_1', B_2';N). 
% \end{equation}
%  This matches a corresponding formula for cusp forms (see \cite[p.473]{BlomerMilicevic}).

We first deal with the easiest cases with $p|q_1 q_2$, where we show
\begin{equation}
\label{eq:IpformulapDividesq1q2}
 I^{(p)}_{\chi_1, \chi_2}(p^{B_1}, p^{B_2}) =
 \begin{cases}
   (1-p^{-1}) \delta_{B_1=B_2}, \qquad &p |(q_1, q_2), \\
   \chi_2(p^{B_1-B_2}) 
   p^{\min(B_1,B_2) - \frac{B_1 + B_2}{2} - it(B_1-B_2)},
   %\frac{ p^{\min(B_1, B_2)}}{p^{\frac{B_1 + B_2}{2} + it(B_1 - B_2)}}, 
   \qquad &p |q_1, p \nmid q_2, \\
   \chi_1(p^{B_2-B_1}) p^{\min(B_1,B_2) - \frac{B_1 + B_2}{2} - it(B_2-B_1)}
   %\frac{  p^{\min(B_1, B_2)}}{p^{\frac{B_1 + B_2}{2} +it(B_2-B_1)}}, 
   \qquad &p | q_2, p \nmid q_1.
 \end{cases}
\end{equation}
Here the expression $\chi_2(p^{B_1 - B_2})$ is interpreted to be $\overline{\chi_2}(p^{B_2-B_1})$ in case $B_2 > B_1$, and similarly for $\chi_1$.
\begin{proof}[Proof of \eqref{eq:IpformulapDividesq1q2}]
 For $p | (q_1, q_2)$, all the terms in \eqref{eq:IpBigExpression}  vanish except $R = B_1 = B_2$, giving the claimed formula.  One may read off the local version of \eqref{eq:Ichi1chi2B1B2B1'B2'} in case $p|q_1 q_2$.
 
 In case $p|q_1$, $p \nmid q_2$ then the second, third, and fourth lines of \eqref{eq:IpBigExpression} vanish, and so we obtain the claimed formula by evaluating the geometric series.
 
 The case $p | q_2$, $p \nmid q_1$ may be derived from the previous case by using \eqref{eq:Ichi1chi2CharacterSwitchingSymmetry}.  % to reduce to the previous case with $p|q_1$, $p \nmid q_2$.
 %It is helpful to notice that the sum in \eqref{eq:IpBigExpression} could have been dissected by $0 \leq R < \min(B_1, B_2)$, $B_1 \leq R < B_2$, $B_2 \leq R < B_1$, and $\max(B_1,B_2) \leq R \leq L$, and the summands have the same formula as given in \eqref{eq:IpBigExpression}.  With this decomposition, then the first three lines of this modified version of \eqref{eq:IpBigExpression} vanish, and we obtain the claimed formula with a similar calculation as for $p|q_1, p \nmid q_2$.
\end{proof}

% If $p|(q_1, q_2)$, then
% \begin{equation}
% I^{(p)}_{\chi_1, \chi_2}(p^{B_1}, p^{B_2}) = (1-p^{-1}) \delta_{B_1=B_2}.
% \end{equation}
% In case $p|q_1$, $p \nmid q_2$ then the second, third, and fourth lines of \eqref{eq:IpBigExpression} vanish, and so we obtain
% \begin{equation}
% %\frac{\chi_2(p^{B_1-B_2})}{p^{\frac{B_1 + B_2}{2} + it(B_1 - B_2)}}
% %\sum_{0 \leq R \leq \min(B_1, B_2)} 
% %\frac{\varphi((p^{ R}, p^{q_1 +L-R}))}{(p^{R}, p^{q_1 +L-R})} 
% % p^R = 
% I^{(p)}_{\chi_1, \chi_2}(p^{B_1}, p^{B_2}) = \frac{\chi_2(p^{B_1-B_2}) p^{\min(B_1, B_2)}}{p^{\frac{B_1 + B_2}{2} + it(B_1 - B_2)}},
% \end{equation}
% by evaluating the geometric series.
% Now consider $p | q_2$, $p \nmid q_1$.  It is helpful to notice that we could have dissected the sum by $0 \leq R < \min(B_1, B_2)$, $B_1 \leq R < B_2$, $B_2 < R \leq B_1$, and $\max(B_1,B_2) \leq R \leq L$, and the summands have the same formula as given above.  With this decomposition, then the first three lines of \eqref{eq:IpBigExpression} vanish, and we obtain
% \begin{equation}
% % \frac{\chi_1(p^{B_2-B_1})}{p^{-\frac{B_1 + B_2}{2} +it(B_2-B_1)}}
% % \sum_{\max(B_1, B_2) \leq R \leq L} 
% % \frac{\varphi((p^{q_2+ R}, p^{L-R}))}{(p^{q_2+ R}, p^{L-R})} 
% % p^{-R}
% I^{(p)}_{\chi_1, \chi_2}(p^{B_1}, p^{B_2}) = \frac{\chi_1(p^{B_2-B_1})  p^{\min(B_1, B_2)}}{p^{\frac{B_1 + B_2}{2} +it(B_2-B_1)}}.
% \end{equation}

Now assume that $p \nmid q_1 q_2$.  We claim that
\begin{equation}
\label{eq:IpEvaluationB1equalsB2}
 I^{(p)}_{\chi_1, \chi_2}(p^{B_1}, p^{B_2}) = (1+ p^{-1}), \quad \text{if} \quad B_1 = B_2,
\end{equation}
and if $B_1 < B_2$, then $I^{(p)}_{\chi_1, \chi_2}(p^{B_1}, p^{B_2})$ equals
\begin{equation}
\label{eq:IpEvaluationB1lessthanB2}
\frac{\overline{\chi_2}(p^{B_2-B_1})}{p^{(B_2 - B_1)(1/2-it)}}
+ 
\frac{\overline{\chi_2}(p^{B_2-B_1})}{p^{(B_2 - B_1)(1/2-it)}} (1-p^{-1}) \sum_{j=1}^{B_2-B_1-1} 
\frac{(\chi_1 \chi_2)(p^{j}) }{p^{2it j}}
+
\frac{\chi_1(p^{B_2-B_1})}{p^{(B_2-B_1)(1/2+it)}}.
\end{equation}
The case $B_2 < B_1$ may be derived from \eqref{eq:IpEvaluationB1lessthanB2}, using \eqref{eq:Ichi1chi2ConjugationSymmetry} or \eqref{eq:Ichi1chi2CharacterSwitchingSymmetry}.
% Finally, if $B_2 < B_1$, then it equals
% \begin{equation}
% \frac{\chi_2(p^{B_1-B_2})}{p^{(B_1 - B_2)(1/2+it)}}
% + 
% \frac{\chi_2(p^{B_1-B_2})}{p^{(B_1 - B_2)(1/2+it)}} (1-p^{-1}) \sum_{j=1}^{B_1-B_2-1} 
% \frac{(\overline{\chi_1 \chi_2})(p^{j}) }{p^{-2it j}}
% +
% \frac{\overline{\chi_1}(p^{B_1-B_2})}{p^{(B_1-B_2)(1/2-it)}}.
% \end{equation}
In all cases, we see that
\begin{equation}
 I_{\chi_1, \chi_2}^{(p)}(p^{B_1}, p^{B_2}) = 
 I_{\chi_1, \chi_2}^{(p)}(p^{B_1- \min(B_1,B_2)}, p^{B_2- \min(B_1,B_2)}),
\end{equation}
and consequently, we obtain the {\bf first key fact}, \eqref{eq:Ichi1chi2B1B2B1'B2'}.

It is a pleasant coincidence that \eqref{eq:IpEvaluationB1lessthanB2} agrees with \eqref{eq:IpformulapDividesq1q2} for $B_1 < B_2$.

\begin{proof}[Proofs of \eqref{eq:IpEvaluationB1equalsB2} and \eqref{eq:IpEvaluationB1lessthanB2}]
 For the terms in \eqref{eq:IpBigExpression}  with $R \leq \min(B_1, B_2)$, we obtain
\begin{equation}
\label{eq:RatmostMin}
\frac{\chi_2(p^{B_1-B_2})}{p^{\frac{B_1 + B_2}{2} + it(B_1 - B_2)}} \sum_{0 \leq R \leq \min(B_1, B_2)} 
\frac{\varphi((p^{ R}, p^{L-R}))}{(p^{R}, p^{L-R})} 
  p^R.
\end{equation}
If $\min(B_1, B_2) < L$, then \eqref{eq:RatmostMin} simplifies as
\begin{equation*}
\frac{\overline{\chi_2}(p^{B_2-B_1}) p^{\min(B_1,B_2)}}{p^{\frac{B_1 + B_2}{2} - it(B_2 - B_1)}}.
\end{equation*}
If $B_1 = B_2 = L$, then \eqref{eq:RatmostMin} becomes
\begin{equation*}
\frac{\chi_2(p^{B_1-B_2})}{p^{\frac{B_1 + B_2}{2} + it(B_1 - B_2)}}
(p^{L-1} + p^{L}) = (1+ p^{-1}).
\end{equation*}
For the terms in \eqref{eq:IpBigExpression}  with $B_1 < R \leq B_2$, we obtain
\begin{equation*}
\frac{\overline{\chi_2}(p^{B_2-B_1})}{p^{(B_2 - B_1)(1/2-it)}} \sum_{B_1 < R \leq B_2} \frac{\varphi((p^{R}, p^{L-R}))}{(p^{R}, p^{L-R})}
\frac{(\chi_1 \chi_2)(p^{R-B_1}) }{p^{2it(R-B_1)}}.
\end{equation*}
If $B_2 < L$ this simplifies as
\begin{equation*}
\frac{\overline{\chi_2}(p^{B_2-B_1})}{p^{(B_2 - B_1)(1/2-it)}} (1-p^{-1}) \sum_{j=1}^{B_2-B_1} 
\frac{(\chi_1 \chi_2)(p^{j}) }{p^{2it j}},
\end{equation*}
while if $B_2 = L$, it instead equals
\begin{equation*}
\frac{\overline{\chi_2}(p^{B_2-B_1})}{p^{(B_2 - B_1)(1/2-it)}} 
\Big(
 (1-p^{-1}) \sum_{j=1}^{B_2-B_1-1} 
\frac{(\chi_1 \chi_2)(p^{j}) }{p^{2it j}}
+
\frac{(\chi_1 \chi_2)(p^{B_2-B_1}) }{p^{2it (B_2 - B_1)}}
\Big)
.
\end{equation*}
Finally, for the terms with $\max(B_1, B_2) < R \leq L$, we obtain
\begin{equation*}
%\frac{\chi_1(p^{B_2-B_1})}{p^{(B_2-B_1)(it)}} \frac{p^{\frac{B_1+B_2}{2}}}{p^{\max(B_1,B_2)+1}}
%=
\frac{1}{p} \frac{\chi_1(p^{B_2-B_1})}{p^{it(B_2-B_1)}} \frac{p^{\min(B_1,B_2)}}{p^{\frac{B_1+B_2}{2}}}.
\end{equation*}

Combining everything, we obtain \eqref{eq:IpEvaluationB1equalsB2} in case $B_1 = B_2$.
Similarly, in case $B_1 < B_2 = L$, then we obtain \eqref{eq:IpEvaluationB1lessthanB2}.
If $B_1 < B_2<L$, then we obtain \eqref{eq:IpEvaluationB1lessthanB2} after some simplifications, taking the term $j=B_2 - B_1$ out from the inner sum.
%\begin{equation}
%\frac{\overline{\chi_2}(p^{B_2-B_1})}{p^{(B_2 - B_1)(1/2-it)}}
%+ 
%\frac{\overline{\chi_2}(p^{B_2-B_1})}{p^{(B_2 - B_1)(1/2-it)}} (1-p^{-1}) \sum_{j=1}^{B_2-B_1} 
%\frac{(\chi_1 \chi_2)(p^{j}) }{p^{2it j}}
%+
%\frac{\chi_1(p^{B_2-B_1})}{p^{1+(B_2-B_1)(1/2+it)}}.
%\end{equation}
%Taking the term $j=B_2 -B_1$ out from the inner sum, and simplifying, we may re-write the above as
%\begin{equation}
%\frac{\overline{\chi_2}(p^{B_2-B_1})}{p^{(B_2 - B_1)(1/2-it)}}
%+ 
%\frac{\overline{\chi_2}(p^{B_2-B_1})}{p^{(B_2 - B_1)(1/2-it)}} (1-p^{-1}) \sum_{j=1}^{B_2-B_1-1} 
%\frac{(\chi_1 \chi_2)(p^{j}) }{p^{2it j}}
%+
%\frac{\chi_1(p^{B_2-B_1})}{p^{(B_2-B_1)(1/2+it)}}.
%\end{equation}
%This agrees with the formula we get in case $B_1 < B_2 = L$.
\end{proof}

%By a similar calculation, we have for $B_2 < B_1 < L$
%\begin{equation}
%\frac{\chi_2(p^{B_1-B_2})}{p^{(B_1 - B_2)(1/2+it)}} (1-p^{-1}) \sum_{j=1}^{B_1-B_2} 
%\frac{(\overline{\chi_1 \chi_2})(p^{j}) }{p^{-2it j}},
%\end{equation}
%and if $B_2 < B_1 = L$, we get
%\begin{equation}
%\frac{\chi_2(p^{B_1-B_2})}{p^{(B_1 - B_2)(1/2+it)}}
%\Big(
% (1-p^{-1})  \sum_{j=1}^{B_1-B_2-1} 
%\frac{(\overline{\chi_1 \chi_2})(p^{j}) }{p^{-2it j}}
%+
%\frac{(\overline{\chi_1 \chi_2})(p^{B_1-B_2}) }{p^{-2it (B_1 - B_2)}}
%\Big)
%.
%\end{equation}

%In case $B_2 < B_1$, we get instead
%\begin{equation}
%\frac{\chi_2(p^{B_1-B_2})}{p^{(B_1 - B_2)(1/2+it)}}
%+ 
%\frac{\chi_2(p^{B_1-B_2})}{p^{(B_1 - B_2)(1/2+it)}} (1-p^{-1}) \sum_{j=1}^{B_1-B_2-1} 
%\frac{(\overline{\chi_1 \chi_2})(p^{j}) }{p^{-2it j}}
%+
%\frac{\overline{\chi_1}(p^{B_1-B_2})}{p^{(B_1-B_2)(1/2-it)}}.
%\end{equation}
%
%It is a pleasant consistency check to observe that $I_{\chi_1, \chi_2}(B_1, B_2;N) = \overline{I_{\chi_1,\chi}(B_2, B_1;N)}$.

%I believe that the above formulas actually reduce to those obtained when $p|q_1$ and $p \nmid q_2$, and also when $p | q_2$ and $p \nmid q_1$.  If $p|(q_1, q_2)$, and $B_1 \neq B_2$ then the above formulas agree.

%In the important special case $B_1 = B_2 = 0$, 
We deduce the {\bf third key fact} \eqref{eq:IpNoBs}, from \eqref{eq:IpformulapDividesq1q2} and \eqref{eq:IpEvaluationB1equalsB2}.
% \begin{equation}
% \label{eq:IpNoBs}
% I^{(p)}_{\chi_1, \chi_2}(1,1) = 
% \begin{cases}
% (1-p^{-1}), \qquad &p|(q_1, q_2) \\
% 1, \qquad &p|q_1, p \nmid q_2 \\
% 1, \qquad &p|q_2, p \nmid q_1 \\
% (1+p^{-1}), \qquad &p \nmid q_1 q_2.
% \end{cases}
% \end{equation}

Recalling the definition \eqref{eq:lambdachi1chi2Def}, 
it is not difficult to derive the {\bf second key fact} \eqref{eq:IpPrimePower} from
\eqref{eq:IpEvaluationB1lessthanB2}.  This completes the proof.
% or $B \geq 1$, we have
% \begin{equation}
% \label{eq:IpPrimePower}
% I^{(p)}_{\chi_1, \chi_2}(1,p^B) = \frac{\lambda_{\chi_1,\chi_2}(p^B, 1/2+it)}{p^{\frac{B}{2}}} - \psi(p) \frac{ \lambda_{\chi_1,\chi_2}(p^{B-2}, 1/2+it)}{p^{\frac{B+2}{2}}},
% \end{equation}
% where $\psi = \chi_1 \overline{\chi_2}$ is the nebentypus of $E_{\chi_1, \chi_2}$. 
% This formula holds for $p|q_1 q_2$.
% Recall that the Hecke relation gives $\lambda(p^B) = \lambda(p) \lambda(p^{B-1}) - \psi(p) \lambda(p^{B-2})$, where here and later in this section we use the shorthand $\lambda(n) = \lambda_{\chi_1, \chi_2}(n,1/2+it)$. 
% 
% 
% Now define 
% \begin{equation*}
% A(p^B) = \frac{I^{(p)}_{\chi_1, \chi_2}(1,p^B)}{I^{(p)}_{\chi_1, \chi_2}(1, 1)},
% \end{equation*}
% which is analogous to a corresponding definition for cuspidal newforms given in \cite[p.473]{BlomerMilicevic}.
% 
% From \eqref{eq:IpNoBs} and \eqref{eq:IpPrimePower}, we derive 
% \begin{equation*}
% A(p) = \frac{\lambda_{\chi_1, \chi_2}(p,1/2+it)}{\sqrt{p} (1+ \chi_0(p) p^{-1})},
% \end{equation*}
% where $\chi_0$ is the principal character modulo $q_1 q_2$.  
% This has the same form as in the cuspidal case, as in \cite[(5.8)]{BlomerMilicevic} (see \cite{BlomerCorrections} for a correction).
% 
% For $B \geq 1$, we have from the Hecke relations
% \begin{equation*}
% A(p^{B+1}) = \frac{\lambda(p)}{\sqrt{p}} A(p^B) - \frac{\psi(p)}{p} A(p^{B-1}),
% \end{equation*}
% which, in the trivial nebentypus case, again has the same form as in \cite[(5.8)]{BlomerMilicevic}.  This completes our evaluation of $I(B_1,B_2;N)$.
\end{proof}

\subsection{An alternative orthonormal basis}
%Blomer and Mili\'cevi\'c  used this calculation to explicitly derive an orthonormal basis for the oldclass $S_{t_j, \psi}(L;f)$ .  One wishes to form an orthonormal basis for $\mathcal{E}_{t,\psi}(L;F)$ in exactly the same way.  Once this is accomplished, one can then typically treat the Maass forms and Eisenstein series on an equal footing.  
Blomer and Mili\'cevi\'c \cite[Lemma 9]{BlomerMilicevic} constructed an orthonormal basis of the oldclass $S_{t_j, \psi_0}(L;f^*)$ where $f^*$ is a cuspidal newform of level $M$ which is $L^2$ normalized with the level $N$ Petersson inner product, and $\psi_0$ denotes the principal character (see \cite[Lemma 3.15]{Humphries} for arbitrary nebentypus).  Their basis takes the form $\{ f^{(g)} : g|L \}$, where $f^{(g)} = \sum_{d|g} \xi_{g}(d) f^* \vert_{d}$, where $\xi_g(d)$ are certain arithmetical functions defined in terms of the Hecke eigenvalues of $f^*$.
They check that the functions $f^{(g)}$ are orthonormal by expanding bilinearly, calculating $\langle f^* \vert_d, f^* \vert_{d'} \rangle$, for each $d,d' |L$, and evaluating the sums.  Therefore, the same process shows that with the coefficients $\xi_{g}(d)$ defined as for cusp forms, using the Hecke eigenvalues, then the linear combinations of normalized $E_{\chi_1, \chi_2}(dz)$'s also form an orthonormal basis for $\mathcal{E}_{t,\psi_0}(L;E_{\chi_1,\chi_2})$.  The crucial fact here is that Lemma \ref{lemma:InnerProductEvaluation} has the same form as \cite[Lemma 3.15]{Humphries}.

\section{Atkin-Lehner operators}
\label{section:AtkinLehnerEigenvalues}
In this section, we explain how the $E_{\chi_1, \chi_2}(Bz,s)$  and $D_{\chi_1, \chi_2,f}(z,s, \psi)$ behave under the Atkin-Lehner operators.  
%We will need this material in Section \ref{section:Kuznetsov} to prove the Bruggeman-Kuznetsov formula for newforms.
%From \eqref{eq:Etwistedavgoveru}, it reduces to understanding $E_{\chi_1,\chi_2}(Bz,s)$.  
\subsection{Newforms}
Essentially everything in this section was worked out by Weisinger \cite{Weisinger} in the holomorphic setting.  

Suppose that $QR=N$, and $(Q,R) = 1$, and define an Atkin-Lehner operator by
\begin{equation*}
 W_Q = \begin{pmatrix} 
        Qr & t \\ Nu & Qv
       \end{pmatrix},
\end{equation*}
where $r,t,u,v \in \mathbb{Z}$, $t \equiv 1 \pmod{Q}$, $r \equiv 1 \pmod{R}$, and $Qrv - R ut = 1$ (so $\det(W_Q) = Q$).  The paper \cite{AtkinLi} is a good reference for these operators.
The nebentypus $\psi$ factors uniquely as $\psi = \psi^{(Q)} \psi^{(R)}$ where $\psi^{(Q)}$ has modulus $Q$ and $\psi^{(R)}$ has modulus $R$.  
Weisinger \cite[p.31]{Weisinger} showed that if $f$ is $\Gamma_0(N)$-automorphic with nebentypus $\psi$, then $f \vert_{W_Q}$, which is independent of $r,t,u,v$, is $\Gamma_0(N)$-automorphic with nebentypus $\overline{\psi}^{(Q)} \psi^{(R)}$.  Here
\begin{equation*}
 f\vert_{W_Q} (z) := j(W_Q, z)^{-k} f(W_Q z).
\end{equation*}

Moreover, Weisinger showed in essence that
\begin{equation}
\label{eq:WeisingerAtkinLehnerPseudoeigenvalue}
 E_{\chi_1, \chi_2} \vert_{W_Q} = c(Q) E_{\chi_1',\chi_2'},
\end{equation}
where the pseudo-eigenvalue $c(Q)$ is an explicit constant depending on the $\chi_i$ (see \eqref{eq:AtkinLehnerEigenvalueFormula} below for a formula), and where the $\chi_i'$ are defined as follows.  Write $\chi_i = \chi_i^{(Q)} \chi_i^{(R)}$, and let $\chi_1' = \chi_2^{(Q)} \chi_1^{(R)} $ and $\chi_2' = \chi_1^{(Q)} \chi_2^{(R)}$.  Note that $\chi_1' \chi_2' = \chi_1 \chi_2$, and that $\chi_1' \overline{\chi_2'} = \overline{\psi}^{(Q)} \psi^{(R)}$.
Actually, Weisinger worked, in effect, with the completed Eisenstein series $E_{\chi_1, \chi_2}^*$ which affects the calculation of the pseuo-eigenvalue, since one must take into account the Gauss sum which appears in \eqref{eq:Echi1chi2FunctionalEquation}.

\begin{proof}[Proof of \eqref{eq:WeisingerAtkinLehnerPseudoeigenvalue}]
 We produce a proof of \eqref{eq:WeisingerAtkinLehnerPseudoeigenvalue}
 which is of an elementary character, and somewhat different in flavor to that of Weisinger's thesis.

 Write $q_i = q_i^{(Q)} q_i^{(R)}$, where $q_1^{(Q)} q_2^{(Q)} = Q$ and $q_1^{(R)} q_2^{(R)} = R$.  Define
\begin{equation*}
 q_1' = q_2^{(Q)} q_1^{(R)}, \qquad \text{and} \qquad
 q_2' = q_1^{(Q)} q_2^{(R)},
\end{equation*}
and observe that $q_i'$ is the modulus of $\chi_i'$.  

From the definition \eqref{eq:EthetaDef}, we have
\begin{equation}
\label{eq:EthetaSlashWQ}
E_{\theta}(z,s) \vert_{W_Q} =j(W_Q, z)^{-k} \sum_{\gamma \in \Gamma_{\infty} \backslash \Gamma_0(1)} \theta(\gamma)
j(\gamma, \sigma W_Q z)^{-k}
 \text{Im}(\gamma \sigma W_Q z)^s,
\end{equation}
where recall $\sigma z = q_2 z$.  Let $\sigma' = (\begin{smallmatrix} \sqrt{q_2'} & \\ & 1/\sqrt{q_2'} \end{smallmatrix})$, so $\sigma' z = q_2' z$.  It is straightforward to check
\begin{equation*}
 \sigma W_Q = \bpm \sqrt{Q} & \\ & \sqrt{Q} \epm
 \underbrace{\bpm q_2^{(Q)} r & q_2^{(R)} t \\ q_1^{(R)} u & q_1^{(Q)} v \epm}_{\lambda}
 \sigma',
\end{equation*}
where observe $\lambda \in SL_2(\mz)$.  One can also show directly that $j(\lambda, \sigma' z) = j(W_Q, z)$, and so
\begin{equation*}
 j(W_Q, z)^{-k} j(\gamma \lambda^{-1}, \sigma W_Q z)^{-k} = 
 j(\gamma, \sigma' z)^{-k}.
 %j(W_Q, z)^{-k} j(\gamma \lambda^{-1}, \lambda \sigma' z)^{-k}
\end{equation*}
Therefore, by changing variables $\gamma \rightarrow \gamma \lambda^{-1}$ in \eqref{eq:EthetaSlashWQ}, we obtain
\begin{equation*}
E_{\theta}(z,s) \vert_{W_Q} = \sum_{\gamma \in \Gamma_{\infty} \backslash \Gamma_0(1)} \theta(\gamma \lambda^{-1})
j(\gamma, \sigma' z)^{-k}
 \text{Im}(\gamma \sigma' z)^s,
\end{equation*}
and so now our task is to understand $\theta(\gamma \lambda^{-1})$.  With $\gamma = (\begin{smallmatrix} a & b \\ c & d \end{smallmatrix})$, by direct calculation, we have
$\theta(\gamma \lambda^{-1}) = \chi_1(c q_1^{(Q)} v - d q_1^{(R)} u) \chi_2(- c q_2^{(R)} t + d q_2^{(Q)} r)$.  Using the factorizations $\chi_i = \chi_i^{(Q)} \chi_i^{(R)}$, we obtain
\begin{equation*}
 \theta(\gamma \lambda^{-1})
 =
 \chi_1'(c) \chi_2'(d) \chi_1^{(Q)}(-q_1^{(R)} u)
 \chi_1^{(R)}(q_1^{(Q)} v)
 \chi_2^{(Q)}(-q_2^{(R)} t)
 \chi_2^{(R)}(q_2^{(Q)} r).
\end{equation*}
Next we use that $\chi_1 = \psi \chi_2$  to simplify the above expression, getting
\begin{equation*}
 \theta(\gamma \lambda^{-1}) = \chi_1'(c) \chi_2'(d) \chi_1^{(Q)}(-1) \psi^{(Q)}(q_1^{(R)} u) \psi^{(R)}(q_1^{(Q)} v)
 \chi_2^{(Q)}(-R ut) \chi_2^{(R)}(Q rv).
\end{equation*}
To simplify further, we note
\begin{equation*}
 \chi_2^{(Q)}(-R ut) \chi_2^{(R)}(Q rv) =  \chi_2(Qrv-Rut) =  1 ,
\end{equation*}
using the determinant equation. %$Qrv-Rut = 1$.  
Moreover, from $t \equiv 1 \pmod{Q}$, we have $u \equiv - \overline{R} \pmod{Q}$, so
$\psi^{(Q)}(q_1^{(R)} u) = \overline{\psi}^{(Q)}(- q_2^{(R)})$, and likewise $\psi^{(R)}(q_1^{(Q)} v) = \overline{\psi}^{(R)}(q_2^{(Q)})$.  In all, this discussion shows
\begin{equation*}
 \theta(\gamma \lambda^{-1}) = \theta'(\gamma) 
 \chi_2^{(Q)}(-1) \overline{\psi}^{(Q)}(q_2^{(R)}) \overline{\psi}^{(R)}(q_2^{(Q)}),
\end{equation*}
where $\theta'$ corresponds to $\chi_1',\chi_2'$.

 In all, we obtain that
\begin{equation}
\label{eq:AtkinLehnerEigenvalueFormula}
 E_{\chi_1, \chi_2}( z,s) \vert_{W_Q} = 
\chi_2^{(Q)}(-1)
\overline{\psi}^{(Q)}(q_2^{(R)})
\overline{\psi}^{(R)}(q_2^{(Q)})
 E_{\chi_1 ', \chi_2 '}(z,s). \qedhere
\end{equation}
\end{proof}

\subsection{The Fricke involution}
As a particularly important special case of \eqref{eq:AtkinLehnerEigenvalueFormula}, if $Q = N$, then we obtain
\begin{equation*}
 E_{\chi_1, \chi_2}( z,s) \vert_{W_N} = 
\chi_2^{}(-1) E_{\chi_2, \chi_1}(z,s).
\end{equation*}
It is a slightly subtle point that $W_N$ is not exactly the same operator as the Fricke involution $\omega_N := (\begin{smallmatrix} 0 & -1 \\ N & 0 \end{smallmatrix})$.  Indeed, we have
\begin{equation*}
 W_N = \begin{pmatrix} Nr & t \\ Nu & Nv \end{pmatrix}
 = \underbrace{\begin{pmatrix} -t & r \\ -N v & u \end{pmatrix}
 }_{\gamma_N \in \Gamma_0(N)} \omega_N,
\end{equation*}
and note $\psi(\gamma_N) = \psi(-1)$.
For a $\Gamma_0(N)$-automorphic function $f$ of nebentypus $\psi$, we have 
\begin{equation*}
 f(\omega_N z) = f(\gamma_N^{-1} W_N z) = j(\gamma_N^{-1}, W_N z)^{k} \psi(-1) j(W_N, z)^k f\vert_{W_N} = \psi(-1) j(\omega_N, z)^{k} f \vert_{W_N}.
\end{equation*}
% 
% $f \vert_{W_N} = \psi(u) f \vert_{\omega_N}$.
% Since $t \equiv 1 \pmod{N}$ by assumption, and $-tu \equiv 1 \pmod{N}$ from the determinant equation, we have $\psi(u) = \psi(-1) = (-1)^k$.  Moreover, $(f\vert_{\omega_N})(z) = (\frac{|z|}{z})^k f(-\frac{1}{Nz})$.  
Collecting these formulas, we obtain
\begin{equation*}
 E_{\chi_1, \chi_2}\Big(\frac{i}{q_1 q_2 y},s \Big) = i^{-k} \chi_2(-1) E_{\chi_2, \chi_1}(iy,s).
\end{equation*}
For the completed Eisenstein series, we may derive
\begin{equation*}
 E_{\chi_1, \chi_2}^*\Big(\frac{i}{q_1 q_2 y},s \Big) = q_1^{1/2-s} q_2^{s-1/2}  i^{-k+\delta_1-\delta_2} \chi_2(-1)
 \epsilon(\chi_1) \epsilon(\overline{\chi_2})
 E_{\chi_2, \chi_1}^*(iy,s).
\end{equation*}
%Although it takes some work to check it, 
Needless to say, this is compatible with \eqref{eq:FrickeFormula1}, which had more restrictive conditions on $k$ and the parity of the characters.

\subsection{Oldforms}
For this subsection, we restrict attention to the trivial nebentypus case with $k=0$, and $\chi_1 = \chi_2 = \chi$ of modulus $\ell_1 = \ell_2 = \ell$.
Viewing $E_{\chi,\chi}(Bz)$ as on $\Gamma_0(\ell^2 M)$ with $B | M$, we need to see how it operates under the larger collection of Atkin-Lehner operators for this subgroup.  Suppose $q$ is a prime so that $q^{\alpha} || M \ell^2$, $q^{\beta}|| M$, (so $q^{\alpha-\beta} || \ell^2$), and $q^{\gamma} || B$.  If $\gamma \leq \frac{\beta}{2}$, then \cite[Lemma 26]{AtkinLehner}  showed
\begin{equation*}
 (E_{\chi,\chi} \vert_{B} ) \vert_{W_q} = (E_{\chi,\chi} \vert_{W_q'} ) \vert_{B'},
\end{equation*}
where we now describe what this means.  First, $f \vert_{B} = f(Bz)$.  The operator $W_q$ is the Atkin-Lehner involution for the group $\Gamma_0(\ell^2 M)$, while $W_q'$ is the one associated to $\Gamma_0(\ell^2)$.  Finally, $B'$ is defined by setting $B = q^{\gamma} B_0$ where $q \nmid B_0$, and then $B' = q^{\beta-\gamma} B_0$.
Thus, we have for $\gamma \leq \frac{\beta}{2}$ that
\begin{equation}
\label{eq:WqAppliedToEchichiBz}
 (E_{\chi,\chi}(Bz,s) ) \vert_{W_q}  = \chi^{(q)}(-1) E_{\chi,\chi}(B'z,s),
\end{equation}
where $q^j || \ell^2$ (so $j = \alpha - \beta$).
If $\gamma > \frac{\beta}{2}$, then the same formula holds, as can be proved by doing the same calculation for $E_{\chi,\chi}(q^{\beta-\gamma} B_0 z, s)$.

 For each prime $q$ dividing $M \ell^2$, the map $B \rightarrow B'$ is an involution on the set of divisors of $M$.  Note that if $q|\ell$ but $q \nmid M$, then $B' = B$. 

Thus, the Atkin-Lehner operators permute the functions $E_{\chi,\chi}(Bz, s)$, with a multiplication by $\chi^{(q)}(-1)$.  The corresponding property for cusp forms was important in \cite{KY}, showing that the the Fourier coefficients of $f \vert_{B}$ at an Atkin-Lehner cusp are essentially the same as at infinity.

It may be interesting to mention that the Atkin-Lehner operators also permute the functions $D_{\chi,f}(z,s) := D_{\chi,\chi,f}(z,s,1)$, since this is a desirable property of an orthonormal basis.
\begin{myprop}
\label{prop:AtkinLehnerPermutesDchi}
 Let $f = \ell g$, with $g | M$, and let $W_q$ be the Atkin-Lehner involution on $\Gamma_0(\ell^2 M)$ with $q | \ell^2 M$.  
  Suppose $q^{j} || \ell^2$.  
 Then
 \begin{equation*}
   D_{\chi, \ell g} \vert_{W_q} = \chi^{(q)}(-1) D_{\chi, \ell g'}.
 \end{equation*}
\end{myprop}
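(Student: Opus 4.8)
The plan is to apply the Atkin--Lehner operator $W_q$ term-by-term to the explicit formula for $D_{\chi,\ell g}$ supplied by \eqref{eq:Etwistedavgoveru}, and then to recognize the output as $\chi^{(q)}(-1)\,D_{\chi,\ell g'}$ by reindexing the resulting double sum. Specializing \eqref{eq:Etwistedavgoveru} to $\chi_1=\chi_2=\chi$, $q_1=q_2=\ell$, $\psi$ trivial, $N=\ell^2 M$, $L=M$ and $f=\ell g$ gives
\[
 D_{\chi,\ell g}(z,s)=\frac{(\ell g,\ \ell M/g)^s}{(\ell^2 M)^s}\,\frac{L(2s,\chi^2)}{L(2s,\chi^2\chi_{0,\ell^2 M})}\sum_{a\mid g}\sum_{b\mid M/g}\frac{\mu(a)\mu(b)\,\chi(ab)}{(ab)^s}\,E_{\chi,\chi}\!\Big(\tfrac{bg}{a}z,s\Big).
\]
Each argument $B:=bg/a$ is a divisor of $M$, so \eqref{eq:WqAppliedToEchichiBz} applies to every summand and produces the common scalar $\chi^{(q)}(-1)$; hence $D_{\chi,\ell g}|_{W_q}$ equals $\chi^{(q)}(-1)$ times the same expression with each $E_{\chi,\chi}(Bz,s)$ replaced by $E_{\chi,\chi}(B'z,s)$, where $B\mapsto B'$ is the involution on divisors of $M$ with $\nu_q(B')=\beta-\nu_q(B)$, $\nu_p(B')=\nu_p(B)$ for $p\ne q$, and $\beta:=\nu_q(M)$.

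The key step is to introduce the reindexing $(a,b)\mapsto(a',b')$ that fixes prime-to-$q$ parts and transposes $q$-valuations: $\nu_q(a')=\nu_q(b)$, $\nu_q(b')=\nu_q(a)$, i.e. $a'=a\,q^{\nu_q(b)-\nu_q(a)}$, $b'=b\,q^{\nu_q(a)-\nu_q(b)}$. Writing $g=q^{\gamma_0}g_0$ with $q\nmid g_0$, so $g'=q^{\beta-\gamma_0}g_0$, one checks that the constraints $\nu_q(a)\le\gamma_0$ and $\nu_q(b)\le\beta-\gamma_0$ satisfied by the indices of $D_{\chi,\ell g}$ are exactly the conditions $a'\mid g'$ and $b'\mid M/g'$, so the map is a bijection between the two index sets (it coincides with its own inverse). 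Under it the $q$-valuation of $b'g'/a'$ is $\nu_q(a)+(\beta-\gamma_0)-\nu_q(b)=\beta-\nu_q(B)$ and its prime-to-$q$ part equals that of $B$, whence $b'g'/a'=B'$, matching the argument produced by $W_q$. Moreover $a'b'=ab$ as integers (the multiset of $q$-valuations is merely transposed, no other valuation moves), so $\chi(a'b')=\chi(ab)$ and $(a'b')^{-s}=(ab)^{-s}$; and $\mu(a')\mu(b')=\mu(a)\mu(b)$ because the local factor $\mu(p^i)\mu(p^j)$ is symmetric in $i,j$. Finally the two prefactors coincide: $\gcd(\ell g,\ell M/g)=\ell\gcd(g,M/g)$ and $\nu_q(\gcd(g,M/g))=\min(\gamma_0,\beta-\gamma_0)=\nu_q(\gcd(g',M/g'))$ with all other local gcd's unaffected, while the $L$-ratio does not involve $g$. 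Substituting the reindexing thus identifies $\chi^{(q)}(-1)$ times the transformed sum with $\chi^{(q)}(-1)\,D_{\chi,\ell g'}(z,s)$.

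The one genuinely delicate point is the compatibility between the analytic "flip at $q$" $B\mapsto B'$ coming from \eqref{eq:WqAppliedToEchichiBz} and the combinatorial transposition $(a,b)\mapsto(a',b')$, together with the verification that this transposition respects the divisibility ranges defining $D_{\chi,\ell g}$ and $D_{\chi,\ell g'}$; the remaining identities (for $\chi$, $\mu$, the powers, the prefactor, and the $L$-ratio) are routine bookkeeping. If one prefers, the whole computation can be organized prime-by-prime using multiplicativity in $M$ of the inner double sum, reducing everything to the single prime $q$, where the substitution is the transposition above, and to primes $p\ne q$, where it is the identity.
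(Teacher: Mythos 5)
Your proposal is correct and follows essentially the same route as the paper: apply \eqref{eq:WqAppliedToEchichiBz} term by term to the expression \eqref{eq:Etwistedavgoveru} for $D_{\chi,\ell g}$, then reindex by transposing the $q$-valuations of $a$ and $b$ (the paper phrases this as factoring off the $q$-part and ``switching the roles of $a$ and $b$'' together with $g'=M-g$ in additive notation). Your write-up is somewhat more explicit about checking that the transposition respects the divisibility ranges and preserves the prefactor $(\ell g,\ell M/g)^s$, but the underlying argument is identical.
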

Remark. If $q | \ell$ but $q \nmid M$, then $g' = g$, and the claimed formula follows immediately from \eqref{eq:WqAppliedToEchichiBz}, since $B' = B$.

\begin{proof}
We have
\begin{equation}
\label{eq:AtkinLehnerAppliedToDchi}
(M \ell)^s \frac{L(2s,\chi^2 \chi_{0,N})}{L(2s,\chi^2)} D_{\chi, \ell g} =  (g, M/g)^s 
 \sum_{\substack{a|g}}  \sum_{\substack{b | \frac{M}{g} }}
 \frac{\mu(a) \mu(b)}{(ab)^{s}}
  \chi(ab)  
 E_{\chi,\chi}\Big(\frac{bg}{a } z, s\Big).
\end{equation}
Let $\Delta$ denote the right hand side of \eqref{eq:AtkinLehnerAppliedToDchi}.  Then by \eqref{eq:WqAppliedToEchichiBz},  $ \Delta \vert_{W_{q}}$ has the same expression but with $(bg/a)$ replaced by $(bg/a)'$, and multiplied by $\chi^{(q)}(-1)$.

By abuse of notation, write $M = q^{M} M_0$, $g = q^{g} g_{0}$, and within the sum we write $a = q^a a_0$ and $b = q^b b_0$.  Then we have 
\begin{multline*}
 \Delta = (g_{0}, M_0/g_{0})^s 
 \sum_{\substack{a_0|g_{0}}}  \sum_{\substack{b_0 | \frac{M_0}{g_{0}} }}
 \frac{\mu(a_0) \mu(b_0)(q^{g}, q^{M-g})^s }{(a_0 b_0)^{s}}
 \chi(a_0 b_0)
 \\
 \sum_{0 \leq a \leq g} \sum_{0 \leq b \leq M - g}
 \frac{\mu(q^a ) \mu(q^b) }{(q^a q^b )^{s}}
  \chi(q^a  q^b )   
 E_{\chi,\chi}\Big(q^{b+g-a} \frac{b_0 g_{0}}{a_0 } z, s\Big).
\end{multline*}
Similarly,
\begin{multline*}
 \Delta \vert_{W_q} = \chi^{(q)}(-1) (g_{0},  M_0/g_{0})^s 
 \sum_{\substack{a_0|g_{0}}}  \sum_{\substack{b_0 | \frac{M_0}{g_{0}} }}
 \frac{\mu(a_0) \mu(b_0)(q^{g}, q^{M-g})^s }{(a_0 b_0)^{s}}
 \chi(a_0 b_0)
 \\
 \sum_{0 \leq a \leq g} \sum_{0 \leq b \leq M - g}
 \frac{\mu(q^a ) \mu(q^b) }{(q^a q^b )^{s}}
  \chi(q^a  q^b )     
 E_{\chi,\chi}\Big((q^{b+g-a})' \frac{b_0 g_{0}}{a_0 } z, s\Big).
\end{multline*}
 According to the discussion preceding Proposition \ref{prop:AtkinLehnerPermutesDchi}, $(q^{b+g-a})' = q^{M - (b+g-a)}$, and $(q^{g})' = q^{M - g}$, or in additive notation, $g' = M - g$.  Finally, switching the roles of $a$ and $b$, and applying the substitution $g' = M - g$ completes the proof.
\end{proof}

\section{Bruggeman-Kuznetsov for newforms}
\label{section:Kuznetsov}
In this section, we use some of the material developed in this paper to 
give a Bruggeman-Kuznetsov formula for newforms, which
extends the newform Petersson formula derived in \cite{PetrowYoung}.

\subsection{Statement of Bruggeman-Kuznetsov}
Suppose that $u_j$ form an orthonormal basis of Hecke-Maass cusp forms of level $N$ and nebentypus $\psi$, and write
\begin{equation*}
u_j(z)= \sum_{n \neq 0} \rho_j(n) e(nx) 
                  W_{0,i t_j}(4 \pi |n| y),
             \end{equation*}
             where
             \begin{equation*}
              W_{0,it_j}(4 \pi y) = 2 \sqrt{y} K_{it_j} (2 \pi y) .
             \end{equation*}       
Similarly, write
\begin{equation*}
 E_{\a}(z,s, \psi) = \delta_{\a=\infty} y^{s} + \rho_{\a, \psi}(s) y^{1-s} +  \sum_{n \neq 0} \rho_{\a, \psi}(n, s) e(nx) W_{0,s-\frac12}(4 \pi |n| y).
\end{equation*}
Renormalize the coefficients by defining
\begin{equation*}
 \nu_{j}(n) = \left(\frac{4\pi |n| }{\cosh(\pi t_j) }\right)^{1/2} \rho_{j} (n), \qquad
		\nu_{\a,t}(n) = \left(\frac{4\pi |n| }{\cosh(\pi t ) }\right)^{1/2} \rho_{\a, \psi}  (n,1/2+it). 
\end{equation*}
If $u_j$ is a newform we have $\nu_j(n) = \nu_j(1) \lambda_j(n)$, where $\lambda_j(n)$ are the Hecke eigenvalues.

The Bruggeman-Kuznetsov formula for $mn>0$ reads as
\begin{multline}
\label{eq:Kuznetsov}
 \sum_j \nu_j(m) \overline{\nu_j(n)}   h(t_j) + \sum_{\a} \frac{1}{4 \pi} \intR \nu_{\a,t}(m) \overline{\nu_{\a,t}}(n)  h(t) dt
 \\
 = \delta_{m=n} g_0  + \sum_{c \equiv 0 \shortmod{N}} \frac{S_{\psi}(m,n;c)}{c} g\Big(\frac{4 \pi \sqrt{mn}}{c}\Big),
\end{multline}
where
\begin{equation*}
 g_0 = \frac{1}{\pi} \intR t  \tanh(\pi t) h(t) dt, \qquad
% \end{equation}
% \begin{equation}
 g(x) = 2i \intR \frac{J_{2it}(x)}{\cosh(\pi t)} t h(t) dt,
\end{equation*}
and
\begin{equation*}
 S_{\psi}(m,n;c) = \sumstar_{x \shortmod{c}} \psi(x) e\Big(\frac{xm + \overline{x} n}{c} \Big).
\end{equation*}

We may wish to only choose an orthogonal basis of cusp forms instead of an orthonormal basis;  the formula is modified by dividing by $\langle u_j, u_j \rangle$, for then $\frac{\nu_j(m) \overline{\nu_j(n)} }{\langle u_j, u_j \rangle}$ is invariant under re-scaling.  The inner product is
\begin{equation*}
 \langle u_j, u_j \rangle = \int_{\Gamma_0(N) \backslash \mathbb{H}} |u_j(z)|^2 \frac{dx dy}{y^2}.
\end{equation*}
This normalization explains why the diagonal term on the right hand side of \eqref{eq:Kuznetsov} does not grow with $N$.

Next we discuss how \eqref{eq:Kuznetsov} changes if we choose an alternative basis of Eisenstein series.  Let $\{F \}$ be an orthogonal basis for the space of Eisenstein series, with inner product defined formally as in Section \ref{section:orthogonality}.  Here we view the spectral parameter $t$ as held fixed, so the dimension of this space equals the number of singular cusps for $\psi$.

We claim the quantity
\begin{equation*}
 \sum_{F \text{ orthogonal basis}} \frac{\nu_{F,t}(m) \overline{\nu_{F,t}(n)} }{\langle F, F \rangle_{\text{Eis}}}
\end{equation*}
is independent of the orthogonal basis.  This follows from Proposition \ref{prop:SpectralFormulasChangeOfBasis}, since one may interpret 
this expression as the part of $\langle P_n, P_m \rangle$ coming from the continuous spectrum, for some generalized Poincare series (or integrals thereof).  
% Then we may write $E_{\a} = \sum_{F} c_{\a, F} F$ for some change-of-basis coefficients that satisfy $\frac{1}{4 \pi} \langle E_{\a}, E_{\b} \rangle =  \delta_{\a = \b} = \sum_{F} c_{\a, F} \overline{c_{\b, F}} \frac{1}{4 \pi} \langle F, F \rangle$.  Here $c_{\a, F} =\frac{\langle E_{\a}, F \rangle}{\langle F, F \rangle},$ and so $\sum_{\a} \overline{c_{\a, F}} c_{\a, G} = \frac{1}{\langle F, F \rangle \langle G, G \rangle} \sum_{\a} \langle F, E_{\a} \rangle \langle E_{\a}, G \rangle = \frac{4 \pi \langle F, G \rangle}{\langle F, F \rangle \langle G, G \rangle}.$
% Then
% \begin{align*}
%  \frac{1}{4 \pi} \langle P_n, P_m \rangle_{cts} &= \sum_{\a}
%  \frac{1}{4 \pi} \langle P_n, E_{\a} \rangle \frac{1}{4 \pi} \langle E_{\a}, P_m \rangle 
%  \\
%  &= \sum_{F, G}  \frac{1}{4 \pi} \langle P_n, F \rangle 
%  \frac{1}{4 \pi} \langle G, P_m \rangle 
%  \sum_{\a} \overline{c_{\a, F}} c_{\a, G} = 
%  \sum_{F} \frac{1}{4 \pi} \frac{\langle P_n, F \rangle \langle F, P_m \rangle}{\langle F, F \rangle},
% \end{align*}
% which shows the desired claim.

Hence, we may re-phrase the Bruggeman-Kuznetsov formula using the decomposition into newforms as in \eqref{eq:EisensteinAtkinLehnerDecomposition}.  That is, we have
\begin{multline}
\label{eq:DeltaNinfinityDef}
 \sum_{\a} \frac{1}{4 \pi} \intR \nu_{\a,t}(m) \overline{\nu_{\a,t}}(n)  h(t) dt
 \\
 = \sum_{LM=N}
 \sum_{E_{\chi_1, \chi_2} \in \mathcal{H}_{t,\psi}^*(M)} 
 \frac{1}{4 \pi} \intR \sum_{\substack{F \text{ orthogonal basis} \\ \text{for } \mathcal{E}_t(L;E_{\chi_1,\chi_2}) }}
 \frac{\nu_{F,t}(m) \overline{\nu_{F,t}(n)} }{\langle F, F \rangle_{\text{Eis}}} h(t) dt.
\end{multline}
This is analogous with the decomposition of cusp forms into newforms, which gives
\begin{equation}
\label{eq:DeltaN0Def}
 \sum_j \nu_j(m) \overline{\nu_j(n)}   h(t_j)
 = \sum_{LM=N} \sum_{f \in H_{t_j, \psi}^*(M)} 
 \sum_{\substack{F \text{ orthogonal basis} \\ \text{for } S_{t_j,\psi}(L;f) }} \frac{\nu_F(m) \overline{\nu_F(n)}}{\langle F, F \rangle}.
\end{equation}
The formula \eqref{eq:DeltaN0Def} is not original; one may alternatively consult \cite[(2.11)]{BlomerHarcosMichel} or \cite[(7.32)]{KnightlyLi}.

\subsection{Bruggeman-Kuznetsov for newforms, squarefree level}
\label{section:KuznetsovNewforms}
For the rest of this section, suppose $N$ is square-free, and $\psi$ is the principal character.
Let us fix a function $h$, which we suppress from the notation, and define $\Delta_N(m,n)$ as the left hand side of \eqref{eq:Kuznetsov}.
Also, write $\Delta_N = \Delta_{N,0} +\Delta_{N,\infty}$ corresponding to the cuspidal part and the Eisenstein part, separately (so $\Delta_{N,0}$ equals \eqref{eq:DeltaN0Def}, and $\Delta_{N,\infty}$ equals \eqref{eq:DeltaNinfinityDef}).  Let $\Delta_{N,0}^*$ denote the newform analog of $\Delta_{N,0}$, where the Maass forms are restricted to be newforms of level $N$.
That is, set
\begin{equation*}
 \Delta_{N,0}^*(m,n) = \sum_{t_j} h(t_j) \sum_{f \in H_{t_j}^*(M)} \frac{\nu_f(m) \overline{\nu_f(n)}}{\langle f, f\rangle}.
\end{equation*}

A nearly-identical proof to that in \cite{PetrowYoung} gives a formula for $\Delta_{N,0}^*$ in terms of $\Delta_{M,0}$'s and vice-versa.  
Precisely, with $\tt{x} = 0$, we have
\begin{multline}
\label{eq:KuznetsovOldforms}
 \Delta_{N,\tt{x}}(m,n) = \sum_{LM=N}\frac{1}{\nu(L)} \sum_{\ell | L^{\infty}}  \frac{\ell}{\nu(\ell)^2 } \sum_{d_1, d_2 | \ell} c_{\ell}(d_1) c_{\ell}(d_2) 
 \sum_{\substack{u|(m,L) \\ v | (n,L)}} \frac{u v }{(u, v)} \frac{\mu(\frac{u v}{(u, v)^2})}{\nu(\frac{u v}{(u, v)^2})}
 \\
\sum_{\substack{a | (\frac{m}{u}, \frac{u}{(u, v)}) \\ b | (\frac{n}{v}, \frac{v}{(u, v)})}}   
\sum_{\substack{e_1 | (d_1, \frac{m}{a^2 (u,v)}) \\ e_2 | (d_2, \frac{n}{b^2 (u,v)})} }
 \Delta_{M,\tt{x}}^*\Big(
\frac{m d_1}{a^2 e_1^2 (u, v)}, 
 \frac{n d_2}{b^2 e_2^2 (u, v)}\Big),
\end{multline}
where a definition of the coefficients $c_{\ell}(d)$ may be found in \cite{PetrowYoung} (we have no need of them here).  This formula
is inverted by
\begin{multline}
 %\label{eq:DeltakN*Formula}
 \label{eq:KuznetsovOldformsInverted}
 \Delta_{N,\tt{x}}^*(m,n) = \sum_{LM=N}\frac{\mu(L)}{\nu(L)} \sum_{\ell | L^{\infty}}  \frac{\ell}{\nu(\ell)^2 } \sum_{d_1, d_2 | \ell} c_{\ell}(d_1) c_{\ell}(d_2) 
 \sum_{\substack{u|(m,L) \\ v | (n,L)}} \frac{u v }{(u, v)} \frac{\mu(\frac{u v}{(u, v)^2})}{\nu(\frac{u v}{(u, v)^2})}
 \\
\sum_{\substack{a | (\frac{m}{u}, \frac{u}{(u, v)}) \\ b | (\frac{n}{v}, \frac{v}{(u, v)})}}   
\sum_{\substack{e_1 | (d_1, \frac{m}{a^2 (u,v)}) \\ e_2 | (d_2, \frac{n}{b^2 (u,v)})} }
 \Delta_{M,\tt{x}}\Big(
\frac{m d_1}{a^2 e_1^2 (u, v)}, 
 \frac{n d_2}{b^2 e_2^2 (u, v)}\Big).
\end{multline}
The proof of \cite{PetrowYoung} only uses Hecke theory, and so all the arguments carry through without any substantial changes.

The goal of the rest of this section is to show that \eqref{eq:KuznetsovOldforms} and \eqref{eq:KuznetsovOldformsInverted} hold equally well for the continuous spectrum, that is, with $\tt{x}=\infty$.  Then since $\Delta_N = \Delta_{N,0} + \Delta_{N,\infty}$, and likewise for $\Delta_M^*$, we obtain analogous formulas for $\Delta_N$ and $\Delta_M^*$.

When $N$ is square-free and $\psi$ is principal, then the decomposition \eqref{eq:EisensteinAtkinLehnerDecomposition2}
simplifies since $\mathcal{H}_t^*(M) = \{0 \}$ for $M \neq 1$, and $\mathcal{H}_t^*(1)$ is the level $1$ Eisenstein series, $E(z,1/2+it)$.
So, define $\Delta_{M,\infty}^*(m,n) = 0$ unless $M=1$, in which case $\Delta_{1,\infty}^*(m,n) = \Delta_{1,\infty}(m,n)$.

For the analysis of $\Delta_{N,\infty}$ to proceed in parallel with that of $\Delta_{N,0}$, we need to pick a basis for $\mathcal{E}_t(L;E)$ analogous to the one chosen in \cite{PetrowYoung}.  Instead of the $E_{\a}$ or $D_{\chi,\ell}$, we start with $E$, the level $1$ Eisenstein series.  Let $\phi$ be a function defined on the divisors of $N$, satisfying $\phi(p) = \pm 1$, extended multiplicatively.  There are $\tau(N)$ such functions $\phi$, since $N$ is squarefree.  Then define
\begin{equation}
\label{eq:EphiDef}
 E_{\phi} = \sum_{d|N} \phi(d) E\vert_{W_d},
\end{equation}
which is on $\Gamma_0(N)$.  Here $W_d$ is the Atkin-Lehner involution, and from \eqref{eq:WqAppliedToEchichiBz}, we have $(E \vert_{W_d})(z, 1/2 + it) = E(dz,1/2+it)$.
It follows from \eqref{eq:EphiDef} that $E_{\phi} \vert_{W_d} = \phi(d) E_{\phi}$, and so these functions form an orthogonal basis for $\mathcal{E}_t(N;E)$.  Thus, 
\begin{equation*}
 \Delta_{N,\infty}(m,n) = \intR h(t) 
 T_t(m,n) dt,
 \qquad T_t(m,n) = 
 \sum_{\phi} \frac{\nu_{E_{\phi},t}(m) \overline{\nu_{E_{\phi},t}(n)} }{\langle E_{\phi}, E_{\phi} \rangle}.
\end{equation*}
Now we wish to evaluate $T_t(m,n)$ in an analogous way to \cite{PetrowYoung}.

As in \cite[(2.5)]{PetrowYoung}, we have
\begin{equation*}
 \langle E_{\phi}, E_{\phi} \rangle = \tau(N) \sum_{d|N} \phi(d) \langle E \vert_{W_d}, E \rangle.
\end{equation*}
Next we claim
\begin{equation}
\label{eq:AbbessUllmoGeneralizationToEisenstein}
 \langle E_t\vert_{W_d}, E_t \rangle = \frac{\tau_{it}(d) \sqrt{d}}{\nu(d)}
 \langle E_t, E_t \rangle,
\end{equation}
where $\nu(d) = \prod_{p|d} (p+1)$, which is the index of $\Gamma_0(d)$ in $\Gamma_0(1)$ for square-free $d$.
When $E$ is replaced by a cuspidal newform, then this was proved by Abbes and Ullmo \cite{AbbesUllmo}.  More general inner product calculations may be found in Section \ref{section:InnerProduct}, but the case here is brief enough that a direct evaluation is desirable.

Returning to Theorem \ref{thm:EchichiInTermsofEa}, we see that
\begin{equation*}
 E(Bz,s) = N^s \sum_{\substack{d|A \\ e|B}} \frac{1}{(de)^s} E_{\frac{1}{Bd/e}}(z,s).
\end{equation*}
Therefore,
\begin{equation*}
 \langle E_t \vert_{W_B}, E_t \rangle = N
 \sum_{a|N} \frac{1}{a^{1/2+it}} \sum_{\substack{b|B, c|\frac{N}{B}}} \frac{1}{(bc)^{1/2-it}} \langle E_{\frac{1}{a}}, E_{\frac{1}{Bc/b}} \rangle.
\end{equation*}
Here the inner product vanishes unless $a = \frac{B}{b} c$.  Thus we obtain
\begin{equation*}
 \frac{1}{4 \pi} \langle E_t \vert_{W_B}, E_t \rangle = N
 \Big(\sum_{b|B} \frac{1}{(B/b)^{1/2+it} b^{1/2-it}} \Big)
 \Big(\sum_{c|\frac{N}{B}} \frac{1}{c} \Big) = \tau_{it}(B) \sqrt{B}\nu(N/B). 
\end{equation*}
Taking $B=1$, we get $\frac{1}{4 \pi} \langle E_t, E_t \rangle = \nu(N)$, and so \eqref{eq:AbbessUllmoGeneralizationToEisenstein} follows, as well as 
\begin{equation}
\label{eq:EinnerproductComparisonTwoGroups}
\langle E_t, E_t \rangle_N = \nu(N) \langle E_t, E_t \rangle_1
\end{equation}
where the subscript on the inner product symbol denotes the level of the group to which the inner product is attached.
Hence
\begin{equation*}
 \langle E_{\phi}, E_{\phi} \rangle = \tau(N) \langle E, E \rangle \prod_{p|N} \Big(1 + \frac{\phi(p) \tau_{it}(p) p^{1/2}}{\nu(p)} \Big),
\end{equation*}
which is the analog of \cite[(2.6)]{PetrowYoung}.

By a direct calculation with the Fourier expansion, we have
\begin{equation*}
%\label{eq:fphiFourierCoefficients}
 \nu_{E_{\phi}}(m) = \sum_{u  | (m,N)} \phi(u) u^{1/2} \nu_E(m/u) = \nu_E(1) \sum_{u  | (m,N)} \phi(u) u^{1/2} \lambda_E(m/u),
\end{equation*}
where $\lambda_E(n) = \tau_{iT}(n)$; this is the analog of \cite[(2.8)]{PetrowYoung}.
We therefore need to evaluate the inner sum over $\phi$, namely
\begin{equation}
\label{eq:chiinnerproductsum}
T_t(m,n)%= \sum_{\phi} \frac{\overline{\lambda_{E_{\phi}}(m)} \lambda_{E_{\phi}}(n)}{\langle  E_{\phi}, E_{\phi} \rangle_N} 
= \frac{1}{\tau(N) \langle E, E \rangle}_N \sum_{\phi} \overline{\lambda_{E_{\phi}}(m)} \lambda_{E_{\phi}}(n) \prod_{p | N} \Big(1 + \frac{\phi(p) \lambda_E(p)p^{1/2}}{\nu(p)} \Big)^{-1},
\end{equation}
where we have used \eqref{eq:AbbessUllmoGeneralizationToEisenstein}.  Here \eqref{eq:chiinnerproductsum} is analogous to \cite[(3.2)]{PetrowYoung}.  At this point, all the calculations of $T_t(m,n)$ run completely parallel to those in \cite[Section 3]{PetrowYoung}, since the formulas that were used there are: Hecke relations, \eqref{eq:AbbessUllmoGeneralizationToEisenstein}, and \eqref{eq:EinnerproductComparisonTwoGroups}, which are the same in both cases of cusp forms vs. Eisenstein.

Therefore, \eqref{eq:KuznetsovOldforms} holds with $\tt{x}=\infty$.
% \begin{multline}
% \label{eq:KuznetsovOldformsEisensteinPart}
%  \Delta_{N,\infty}(m,n) = \sum_{LM=N}\frac{1}{\nu(L)} \sum_{\ell | L^{\infty}}  \frac{\ell}{\nu(\ell)^2 } \sum_{d_1, d_2 | \ell} c_{\ell}(d_1) c_{\ell}(d_2) 
%  \sum_{\substack{u|(m,L) \\ v | (n,L)}} \frac{u v }{(u, v)} \frac{\mu(\frac{u v}{(u, v)^2})}{\nu(\frac{u v}{(u, v)^2})}
%  \\
% \sum_{\substack{a | (\frac{m}{u}, \frac{u}{(u, v)}) \\ b | (\frac{n}{v}, \frac{v}{(u, v)})}}   
% \sum_{\substack{e_1 | (d_1, \frac{m}{a^2 (u,v)}) \\ e_2 | (d_2, \frac{n}{b^2 (u,v)})} }
%  \Delta_{M,\infty}^*\Big(
% \frac{m d_1}{a^2 e_1^2 (u, v)}, 
%  \frac{n d_2}{b^2 e_2^2 (u, v)}\Big),
% \end{multline}
%We remark in passing that in this formula,  \\ $\Delta_{M,\infty}^*(m',n') = 0$ unless $M=1$ in which case $\Delta_{1,\infty}^*(m',n') = \Delta_{1,\infty}(m',n')$.
We also claim that the inversion formula \eqref{eq:KuznetsovOldformsInverted} holds with $\tt{x}=\infty$.
% \begin{multline}
%  %\label{eq:DeltakN*Formula}
%  \Delta_{N,\infty}^*(m,n) = \sum_{LM=N}\frac{\mu(L)}{\nu(L)} \sum_{\ell | L^{\infty}}  \frac{\ell}{\nu(\ell)^2 } \sum_{d_1, d_2 | \ell} c_{\ell}(d_1) c_{\ell}(d_2) 
%  \sum_{\substack{u|(m,L) \\ v | (n,L)}} \frac{u v }{(u, v)} \frac{\mu(\frac{u v}{(u, v)^2})}{\nu(\frac{u v}{(u, v)^2})}
%  \\
% \sum_{\substack{a | (\frac{m}{u}, \frac{u}{(u, v)}) \\ b | (\frac{n}{v}, \frac{v}{(u, v)})}}   
% \sum_{\substack{e_1 | (d_1, \frac{m}{a^2 (u,v)}) \\ e_2 | (d_2, \frac{n}{b^2 (u,v)})} }
%  \Delta_{M,\infty}\Big(
% \frac{m d_1}{a^2 e_1^2 (u, v)}, 
%  \frac{n d_2}{b^2 e_2^2 (u, v)}\Big).
% \end{multline}
The key to this is that the inversion formula proved in \cite[Section 4]{PetrowYoung} is a combinatorial formula proved by inclusion-exclusion and does not depend on any properties of $\Delta_{M,\infty}^*$.  %This is why the same inversion formula holds for the continuous spectrum.
Similarly, the intermediate hybrid formulas appearing in \cite[Section 5]{PetrowYoung} also extend to the Bruggeman-Kuznetsov formula; these only rely in the previous formulas and Hecke relations which hold equally well in both the Maass and Eisenstein cases.

% Actually, we may interpet \eqref{eq:KuznetsovOldformsEisensteinPart} as an interesting formula relating the level $N$ Eisenstein contribution to the Bruggeman-Kuznetsov formula to the level $1$ Eisenstein contribution, since $\Delta_{M,\infty}^*(m',n') = $ unless $M=1$.

One may then set up the hybrid cubic moment for Maass forms as in \cite[(8.7)]{PetrowYoung}: there exist positive weights $\omega_{u_j}$ and $\omega_t$ so that we define
\begin{multline*}
 \mathcal{M}(r,q) = \sum_{\substack{u_j \text{ new, level $rq'$} \\ q'| \tilde{q}}} \omega_{u_j} h(t_j) L(1/2, u_j \otimes \chi_q)^3 
 \\
 + 
 \frac{1}{4\pi} \intR
 \sum_{\substack{E_t \text{ new, level $rq'$} \\ q'| \tilde{q}}}  \omega_{t} h(t) L(1/2, E_t \otimes \chi_q)^3 dt.
\end{multline*}
Actually this sum over $E_t$ is empty except when $r=1$ and $q'=1$ in which case the problem reduces to the one treated in \cite{ConreyIwaniec}.  There is no reason to exclude $r=1$, however.  Now one may approach $\mathcal{M}(r,q)$ as in \cite{PetrowYoung}, just as the original paper of Conrey-Iwaniec \cite{ConreyIwaniec} dealt equally well with Maass forms and holomorphic forms.

\section{Proof of the inversion formula}
%\subsection{}
\label{section:Inversion}
\begin{proof}[Proof of Lemma \ref{lemma:InversionElementaryLemma}]
 Write  
 \begin{equation*}
  \mathcal{K} = \mathop{\sum_{d|A} \sum_{e|B}}_{(d,e) = 1} \omega_1(d) \omega_2(e) J\Big( \frac{Bd}{e}\Big),
 \end{equation*}
so the desired identity is  $\mathcal{K} = K(B ) \prod_{p|L} (1-\omega_1(p) \omega_2(p))$.
Inserting the definition \eqref{eq:JintermsofK}, we have
 \begin{equation*}
  \mathcal{K} =  \sum_{\substack{d|A \\ e|B \\ (d,e) = 1}} 
 \sum_{\substack{a | \frac{Bd}{e} }}
  \sum_{\substack{b | \frac{A e}{d} }}
 \mu(a) \mu(b) \omega_1(bd) \omega_2(ae)
 K\Big(\frac{b d B}{ae} \Big).
 \end{equation*}
Reversing the orders of summation, we obtain
\begin{equation*}
\mathcal{K} = \sum_{a|L} \sum_{b | L}
 \sum_{
  \substack{d | A, Bd \equiv 0 \shortmod{ae} \\
    e | B, Ae \equiv 0 \shortmod{bd} 
   \\
   (d,e) = 1}}
 \mu(a) \mu(b)\omega_1(bd) \omega_2(ae)
 K\Big(\frac{b d B}{ae}  \Big).
\end{equation*}
 Now let $(a,d) = g$ and $(b,e) = h$ and write $d = gd'$ and $e= he'$, giving
\begin{equation*}
\mathcal{K} = \sum_{a|L} \sum_{b | L}
\sum_{g | (a,A)} \sum_{h | (b,B)}
 \sum_{
  \substack{d' | \frac{A}{g}, Bd' \equiv 0 \shortmod{\frac{a}{g} h e'} \\
    e' | \frac{B}{h}, A  e' \equiv 0 \shortmod{ \frac{b}{h} g d'} \\
    (d', \frac{a}{g}) = 1, \thinspace
    (e', \frac{b}{h}) = 1 \\
    (gd', he') = 1
    }}
 \mu(a) \mu(b) 
 \omega_1(b g d')
 \omega_2(a h e')
 K\Big(\frac{b g d' B}{ah e'}  \Big).
\end{equation*}
 Now since $(d', \frac{a}{g} h e') = 1$, the first congruence condition is equivalent to $\frac{a}{g} h e' | B$ (which automatically implies $e'| \frac{B}{h}$, so this latter condition may be omitted since it is redundant).  Similarly, the second congruence is equivalent to $\frac{b}{h} g d' | A$.  Next we move the sums over $g$ and $h$ to the outside, and define $a = ga'$, $b = hb'$.  Simplifying, we obtain
\begin{equation*}
\mathcal{K} = \sum_{g | A} 
\sum_{h | B}
\sum_{a'|\frac{L}{g}} \sum_{b' | \frac{L}{h}}
 \sum_{
  \substack{ a' h e' | B \\
    b' g d' |A   \\
    (d', a') = 1\\
    (e', b') = 1 \\
    (gd', he') = 1
    }}
  \mu(ga') \mu(hb')
 \omega_1(b' g h  d')
 \omega_2( a'  g h  e')
 K\Big(\frac{ b'  d' B}{ a' e'}  \Big).
\end{equation*}
 Next expand $\mu(ga') = \mu(g) \mu(a')$, recording the coprimality condition $(a',g) = 1$, and similarly for $\mu(hb')$.  Then
\begin{equation*}
\mathcal{K} = \sum_{
  \substack{ a' h e' | B \\
    b' g d' |A   \\
    (d'g, a') = 1\\
    (e'h, b') = 1 \\
    (gd', he') = 1
    }}
 \mu(g) \mu(a') \mu(h) \mu(b') 
 \omega_1(b' g h  d')
 \omega_2( a'  g h  e')
 K\Big(\frac{ b'  d' B}{ a' e'}  \Big).
\end{equation*}
Now let $(a',b') = r$, and write $a'=r a''$, $b'=r b''$ where now $(a'', b'') = 1$.  Then
\begin{equation*}
\mathcal{K} = \sum_{
  \substack{ a'' r h e' | B \\
    b'' r g d' |A   \\
    (\dots)
    }}
 \mu(g) \mu(a'') \mu^2(r) \mu(h) \mu(b'') 
 \omega_1(b'' r g h  d')
 \omega_2( a'' r  g h  e')
 K\Big(\frac{ b''  d' B}{ a'' e'}  \Big),
\end{equation*}
where $(\dots)$ represents the following coprimality conditions:
\begin{equation}
\label{eq:greatbiglistofcoprimalityconditions}
 (d'g, a'' r) = 1, \quad
    (e'h, b'' r) = 1, \quad
    (gd', he') = 1, \quad
    (a'', b'') = 1, \quad
    (a'' b'', r) = 1.
\end{equation}

Now let $a'' e' = \alpha$ be a new variable, and likewise $b'' d' = \beta$; the coprimality conditions in \eqref{eq:greatbiglistofcoprimalityconditions} translate into these conditions: $(\alpha, \beta) =1$, $(\alpha, rg) = 1$, $(\beta,  r h) = 1$, and $(r,gh) = (g,h) = 1$.  Note that $a''$ and $b''$ do not occur in this list of conditions.
Thus we obtain
\begin{equation*}
\mathcal{K} = \sum_{
  \substack{  r h \alpha | B \\
     r g \beta |A   \\
     (\dots)
%     (\alpha, \beta) = 1\\
%     (\alpha, fg) = 1 \\
%     (\beta, fh) = 1 \\
%     (r, gh) = 1 \\
%     (g, h) = 1
    }}
 \mu(g)  \mu^2(r) \mu(h)  
 \omega_1( r g h  \beta)
 \omega_2(  r  g h  \alpha)
 K\Big(\frac{ \beta B}{ \alpha} \Big)
\sum_{a'' | \alpha} \sum_{b'' | \beta} 
 \mu(a'') \mu(b''),
\end{equation*}
where now $(\dots)$ represents the coprimality conditions translated into the new variables.
The upshot is that M\"obius inversion gives $\alpha = \beta = 1$, and we obtain  $\mathcal{K} = \kappa K(B)$, where
\begin{equation*}
\kappa := \sum_{
  \substack{  r h  | B, \thinspace 
     r g  |A, \\
    (r, gh) = 1, \thinspace
    (g, h) = 1
    }}
 \mu(g)  \mu^2(r) \mu(h)  (\omega_1 \omega_2)(r g h ).
\end{equation*}
We claim that $\kappa = \prod_{p|AB} (1- \omega_1(p) \omega_2(p))$, which may be checked prime-by-prime by brute force.
\end{proof}

% For now, I just want to bound it, by
% \begin{equation}
%  N \ell \mathop{\sum_{\substack{d_1|A_1 \\ e_1|B_1 \\ (d_1,e_1) = 1}}
%  \sum_{\substack{d_2|A_2 \\ e_2|B_2 \\ (d_2,e_2) = 1}}
%       }_{\frac{B_1 d_1}{e_1} = \frac{B_2 d_2}{e_2}} \frac{1}{\sqrt{d_1 e_1 d_2 e_2}}.
% \end{equation}
% Now if we look at just one prime power here, then we can have $d_1 = d_2=0$, and $e_1 = 0$, $e_2 = B_2 - B_1$ (assuming $B_2 \geq B_1$ at this prime).  This leads to $\frac{\sqrt{B_1}}{\sqrt{B_2}} = \frac{\sqrt{(B_1, B_2)}}{\sqrt{[B_1, B_2]}} = \frac{(B_1, B_2)}{\sqrt{B_1 B_2}}$.  Now this extends multiplicatively, and so the bound is
% \begin{equation}
%  N \ell \frac{(B_1, B_2)}{\sqrt{B_1 B_2}}.
% \end{equation}

\section{Mellin transform of Whittaker function}
\label{section:WhittakerMellinTransform}
In this section, we take the opportunity to correct \cite[Lemma 8.2]{DFI}.  The overall method of \cite{DFI} is valid, but a typo early in the derivation makes it difficult to correct the mistake without going through the entire process again.

Recall the definition
\begin{equation*}
 \Phi_k^{\varepsilon}(s,\beta)
 = \sqrt{\pi} \int_0^{\infty} 
 \Big(W_{\frac{k}{2}, \beta}(4  y) + \varepsilon \frac{\Gamma(\beta+\frac{1+k}{2})}{\Gamma(\beta+\frac{1-k}{2})} W_{-\frac{k}{2}, \beta}(4  y)
 \Big) y^{s-\frac12} \frac{dy}{y},
\end{equation*}
where $\varepsilon = \pm 1$, and that we wish to show
\begin{equation*}
 \Phi_k^{\varepsilon}(s,\beta)
 =   p_k^{\varepsilon}(s,\beta)
 \Gamma\Big(\frac{s+\beta + \frac{1- \varepsilon (-1)^k}{2}}{2}\Big)
 \Gamma\Big(\frac{s-\beta + \frac{1-\varepsilon}{2}}{2}\Big),
\end{equation*}
 where  $p_k^{\varepsilon}(s,\beta)$ is a certain polynomial defined recursively below.  
 
The first step of the derivation in \cite{DFI} is to give two recursion formulas for the Whittaker function, the first of which contains a typo.  The corrected formulas are (cf. \cite[(13.15.10), (13.15.12)]{DLMF})
\begin{align*}
 \frac{2 \beta}{\sqrt{y}} W_{\alpha,\beta}(y) 
 &= W_{\alpha+\frac12, \beta+\frac12}(y) - W_{\alpha+\frac12,\beta-\frac12}(y) \\
 &= (\beta-\alpha+\tfrac12) W_{\alpha-\frac12, \beta+\frac12}(y) + (\beta + \alpha - \tfrac12) W_{\alpha-\frac12, \beta-\frac12}(y).
\end{align*}

%\cite[(9.234.1), (9.234.2)]{GR}, though note that (9.234.2) should state
% \begin{equation*}
%  W_{\mu,\lambda}(z) = \sqrt{z} W_{\mu-\frac12, \lambda + \frac12}(z) + (\tfrac12 - \lambda - \mu) W_{\mu-1, \lambda}(z).
% \end{equation*}

Next define
\begin{equation}
 V_{k,\beta}^{\varepsilon}(y)
 =
 W_{\frac{k}{2}, \beta}(4  y) + 
 \varepsilon
 \frac{\Gamma(\beta+\frac{1+k}{2})}{\Gamma(\beta+\frac{1-k}{2})} W_{-\frac{k}{2}, \beta}(4  y).
\end{equation}
The above recursion formulas give
\begin{multline*}
 \frac{2 \beta}{\sqrt{y}} V_{k,\beta}^{\varepsilon}(\frac{y}{4})
 =
 (\beta + \tfrac{1-k}{2}) W_{\frac{k-1}{2}, \beta+\frac12}(y) 
 + 
 (\beta + \tfrac{k-1}{2}) W_{\frac{k-1}{2}, \beta-\frac12}(y) 
 \\
 +
 \varepsilon
 \frac{\Gamma(\beta+\frac{1+k}{2})}{\Gamma(\beta+\frac{1-k}{2})}
 \Big(
 W_{\frac{1-k}{2}, \beta+\frac12}(y)
 - W_{\frac{1-k}{2}, \beta-\frac12}(y)
 \Big),
\end{multline*}
and using $\Gamma(s+1) = s \Gamma(s)$, we derive
\begin{equation*}
 \frac{ \beta}{\sqrt{y}} V_{k,\beta}^{\varepsilon}(y)
 = (\beta + \tfrac{1-k}{2}) V_{k-1, \beta+\frac12}^{\varepsilon}(y)
 +
 (\beta + \tfrac{k-1}{2}) V_{k-1, \beta-\frac12}^{-\varepsilon}(y),
\end{equation*}
which replaces \cite[(8.28)]{DFI}.  Then on integration, we derive
\begin{equation*}
 \Phi_{k}^{\varepsilon}(s,\beta)
 =(1 - \tfrac{k-1}{2 \beta}) \Phi_{k-1}^{\varepsilon}(s+\tfrac12, \beta + \tfrac12)
 +
 (1 + \tfrac{k-1}{2 \beta}) \Phi_{k-1}^{-\varepsilon}(s+\tfrac12, \beta - \tfrac12),
\end{equation*}
for $\beta \neq 0$.  When $k=0$, then the formula for $V_{0,\beta}^{\varepsilon}(y)$ given in \cite[p.532]{DFI} is correct, and by \cite[(6.561.16)]{GR}, we derive
\begin{equation*}
 \Phi_0^{\varepsilon}(s,\beta) =  \Big(\frac{1+\varepsilon}{2} \Big) \Gamma\Big(\frac{s+\beta}{2}\Big) \Gamma\Big(\frac{s-\beta}{2}\Big),
\end{equation*}
which differs from \cite[(8.30)]{DFI} by a factor $\frac14$.  This shows the claimed formula for $\Phi_0^{\varepsilon}$ with $p_0^{\varepsilon} = \frac{1+\varepsilon}{2}$.

Now proceed by induction on $k \geq 1$.  We obtain
\begin{multline*}
 \Phi_{k}^{\varepsilon}(s,\beta)
 =(1 - \tfrac{k-1}{2 \beta}) p_{k-1}^{\varepsilon}(s+\tfrac12, \beta + \tfrac12) \Gamma\Big(\frac{s+\beta + 1+ \frac{1- \varepsilon (-1)^{k-1}}{2}}{2}\Big)
 \Gamma\Big(\frac{s-\beta + \frac{1-\varepsilon}{2}}{2}\Big)
 \\
 +
 (1 + \tfrac{k-1}{2 \beta}) p_{k-1}^{-\varepsilon}(s+\tfrac12, \beta - \tfrac12)
 \Gamma\Big(\frac{s+\beta + \frac{1+ \varepsilon (-1)^{k-1}}{2}}{2}\Big)
 \Gamma\Big(\frac{s-\beta + 1+ \frac{1+\varepsilon}{2}}{2}\Big).
\end{multline*}
Next we use
\begin{equation*}
 \Gamma\Big(\frac{s+\beta + 1+ \frac{1- \varepsilon (-1)^{k-1}}{2}}{2}\Big)
 %= \Gamma\Big(\frac{s+\beta +  \frac{1- \varepsilon (-1)^{k}}{2} + 1 +  \epsilon (-1)^k}{2}\Big)
 = \Gamma\Big(\frac{s+\beta +  \frac{1- \varepsilon (-1)^{k}}{2}}{2}\Big) 
 \times
 \begin{cases}
   1, \quad \varepsilon  = -(-1)^k \\
   \frac{s + \beta}{2}, \quad \varepsilon  = (-1)^k.
 \end{cases}
\end{equation*}
And similarly,
\begin{equation*}
 \Gamma\Big(\frac{s-\beta + 1+ \frac{1+\varepsilon}{2}}{2}\Big)
 %= \Gamma\Big(\frac{s+\beta +  \frac{1- \varepsilon (-1)^{k}}{2} + 1 +  \epsilon (-1)^k}{2}\Big)
 = \Gamma\Big(\frac{s-\beta +  \frac{1- \varepsilon}{2}}{2}\Big) 
 \times
 \begin{cases}
   1, \quad \varepsilon  = -1 \\
   \frac{s - \beta}{2}, \quad \varepsilon  = 1.
 \end{cases}
\end{equation*}
Therefore, we obtain
a recursion
\begin{equation}
\begin{split}
 p_{k}^{\varepsilon}(s,\beta)
 = &(1 - \tfrac{k-1}{2 \beta}) p_{k-1}^{\varepsilon}(s+\tfrac12, \beta + \tfrac12) \times
 \left\{ \begin{array}{lr}
   1, & \varepsilon  = -(-1)^k \\
   \frac{s + \beta}{2}, & \varepsilon = (-1)^k
 \end{array}
 \right\}
 \\
 + &
 (1 + \tfrac{k-1}{2 \beta}) p_{k-1}^{-\varepsilon}(s+\tfrac12, \beta - \tfrac12)
 \times
 \left\{ \begin{array}{lr}
   1, & \varepsilon  = -1 \\
   \frac{s - \beta}{2}, & \varepsilon  = 1
 \end{array}
 \right\}.
 \end{split}
\end{equation}
 By direct calculation, we have
 \begin{align*}
  p_1^{+}(s,\beta) & = 1& p_1^{-}(s,\beta) & = 1 \\
  p_2^{+}(s,\beta) & = s-\tfrac12 & p_2^{-}(s,\beta) & = 2 \\
  p_3^{+}(s,\beta) & = 2s-1 -\beta  & p_3^{-}(s,\beta) & = 2s-1 +\beta \\
  p_4^{+}(s,\beta) & = 2(s-\tfrac12)^2 - \beta^2 + \tfrac14  & p_4^{-}(s,\beta) & = 4(s-\tfrac12).
 \end{align*}
Compared with \cite[(p. 533)]{DFI}, the sign on $\beta$ is reversed.

\end{document}